\theoremstyle{plain}
\newtheorem{theorem}{Theorem}[section]
\newtheorem{definition}[theorem]{Definition}
\newtheorem{corollary}[theorem]{Corollary}
\newtheorem{proposition}[theorem]{Proposition}
\newtheorem{lemma}[theorem]{Lemma}
\newtheorem{remark}[theorem]{Remark}
\numberwithin{theorem}{section} \numberwithin{equation}{section}
\def\R{\mathbb{R}}
\newcommand{\del}{\partial }
\renewcommand{\phi}{\varphi}
\newcommand{\N}{\mathbb{N}}
\newcommand{\cA}{{\mathcal A}}
\newcommand{\cI}{{\mathcal I}}
\newcommand{\cN}{{\mathcal N}}
\newcommand{\weakto}{\rightharpoonup}
\newcommand{\B}{{\bf B}}
\newcommand{\eps}{\varepsilon}
\renewcommand{\epsilon}{\varepsilon}
\DeclareOldFontCommand{\bf}{\normalfont\bfseries}{\mathbf}
\begin{document}
	\title[
	Spectrum of a mixed-type operator and rotating wave solutions 
	]{On the spectrum of a mixed-type operator with applications to rotating 
	wave solutions}
	%
	%
%
	%
	\author{Joel K\"ubler} 
	\address{Institut f\"ur Mathematik,
	Goethe-Universit\"at Frankfurt,
	Robert-Mayer-Str. 10,
	D-60629 Frankfurt am Main, Germany}
   \email{kuebler@math.uni-frankfurt.de}
%
    %
	\date{\today}

	\subjclass[2020]{   
		Primary: 
		35M12.         
		Secondary:
		35B06,          
		35P15,          
		47J30,          
		35P20.       
	}  	
	\keywords{
    Nonlinear wave equation,
    semilinear mixed type problem,
    ground state solution,
    eigenvalue estimates,
    zeros of Bessel functions.
   } 
%
%
	\begin{abstract}
		We study rotating wave solutions of the nonlinear wave equation 
		$$
			\left\{ 
			\begin{aligned}
				\partial_{t}^2 v - \Delta v + m v & = |v|^{p-2} v   \quad && 
				\text{in $\mathbb{R} 
					\times \mathbf{B}$} \\
				v & = 0 && \text{on $\mathbb{R} \times \partial \mathbf{B}$}
			\end{aligned}
			\right.
		$$
		where $2<p<\infty$, $m \in \R$ and $\mathbf{B} \subset \mathbb{R}^2$ 
		denotes the unit 
		disk. If the angular velocity $\alpha$ of the rotation is larger than 
		$1$, this leads 
		to a semilinear boundary value problem on $\B$ involving a mixed-type 
		operator, whose spectrum is related to the zeros of Bessel functions  
		and could 
		generally be badly behaved. 
		Based on new estimates for these zeros, we find 
		values of $\alpha$ such that the spectrum only consists of eigenvalues 
		with finite multiplicity and has no accumulation point. Combined with 
		suitable spectral 
		estimates, this allows us to formulate an appropriate indefinite 
		variational 
		setting 
		and find ground state solutions of the reduced equation for $p \in 
		(2,4)$. Using a 
		minimax characterization of the ground state 
		energy, we ultimately show that these ground states are nonradial and 
		thus yield nontrivial rotating waves, provided $m$ is sufficiently 
		large.
  \end{abstract} 
	\maketitle
\section{Introduction}
We consider time-periodic solutions of the nonlinear wave equation 
\begin{equation} \label{NLKG}
	\left\{ 
	\begin{aligned}
		\del_{t}^2 v - \Delta v + m v & = |v|^{p-2} v   \quad && \text{in $\R 
		\times \B$} \\
		v & = 0 && \text{on $\R \times \del \B$}
	\end{aligned}
	\right.
\end{equation}
where $2<p<\infty$, $m \in \R$ and $\B \subset \R^2$ denotes the unit disk. In 
the case $m>0$, this is also commonly referred to as a nonlinear Klein-Gordon 
equation.
A well-known class of such solutions is given by standing wave solutions, which 
reduce \eqref{NLKG} either to a 
stationary nonlinear Schr\"odinger or a nonlinear 
Helmholtz equation and 
have been studied extensively on the whole space $\R^N$, see 
\cite{Strauss,evequoz-weth}. 
Note that this yields complex-valued solutions whose amplitude remains 
stationary, however, while other types of time-periodic solutions are 
significantly less well understood. 
In particular, much less is known about the dynamics of nonlinear wave 
equations in general 
bounded domains. 

In the one-dimensional setting, which typically describes the forced 
vibrations of a nonhomogeneous string, the existence of time-periodic 
solutions satisfying either Dirichlet or periodic boundary conditions has been  
treated in the seminal works of Rabinowitz~\cite{Rabinowitz} 
and Br\'ezis, Coron and Nirenberg~\cite{Brezis-Coron-Nirenberg} by variational 
methods, but the results in higher dimensions are more sparse. On
balls centered at the origin, the existence of radially symmetric time-periodic
solutions was first studied by Ben-Naoum and 
Mawhin~\cite{Ben-Naoum-Mawhin} for sublinear 
nonlinearities and subsequently received further attention, see e.g. the recent 
works of
Chen and Zhang~\cite{Chen-Zhang,Chen-Zhang-2,Chen-Zhang-3} and the references 
therein.

In this paper, we study \emph{rotating wave solutions} as introduced in 
\cite{Kuebler-Weth}, which are time-periodic real-valued solutions of 
\eqref{NLKG} given by the ansatz 
\begin{equation} \label{eq: Rotating solution ansatz}
	v(t,x)=u(R_{\alpha t} (x)),
\end{equation}
where $R_\theta \in O(2)$ describes a rotation in $\R^2$ with angle $\theta>0$, 
i.e.,
\begin{equation} \label{eq: Rotation definition} 
R_{\theta}(x)= (x_{1} \cos \theta  + x_2 \sin \theta , -x_{1} \sin \theta  + 
x_2 \cos \theta ) \qquad \text{for $x \in \R^2$}.
\end{equation}
In particular, the constant $\alpha>0$ in \eqref{eq: Rotating solution ansatz} 
is the angular velocity of the rotation.
Consequently, such solutions can be interpreted as rotating waves in a 
nonlinear medium. 
We note that a related ansatz for generalized traveling waves on manifolds has 
also been considered in \cite{Taylor,Mukherjee,Mukherjee2}, while a class of 
spiral shaped solutions for a nonlinear Schr\"odinger equation on $\R^3$ has 
been treated in \cite{Agudelo-Kuebler-Weth}.

In the following, we let $\theta$ denote the angular variable in 
two-dimensional 
polar coordinates and note that the ansatz \eqref{eq: Rotating solution ansatz} 
reduces 
\eqref{NLKG} to
\begin{equation} \label{Reduced equation}
	\left\{ 
	\begin{aligned} 
		-\Delta u + \alpha^2 \del_{\theta}^2 u +m u & = |u|^{p-2} u \quad 
		&& \text{in $\B$} \\
		u & = 0 && \text{on $\del \B$}
	\end{aligned}
	\right.
\end{equation}
where $\del_\theta = x_{1} \del_{x_2} - x_{2} \del_{x_{1}}$ then corresponds to 
the angular derivative.
Note that this equation has solutions which are independent of $\theta$, but  
these correspond to stationary and therefore non-rotating solutions of 
\eqref{NLKG}. 
In the following, our goal is to prove the existence of nonradial, i.e., 
$\theta$-dependent, solutions of 
\eqref{Reduced equation}.

In the case $\alpha \leq 1$, this question has been studied in great detail in
\cite{Kuebler-Weth}, where a connection to degenerate Sobolev inequalities is explored. In particular, it has been observed that the 
ground states, i.e., minimizers of the associated Rayleigh quotient, are nonradial in certain parameter regimes for $p$ and $\alpha$. 

The main purpose of the present paper is the study of nonradial solutions of 
\eqref{Reduced equation} for $\alpha>1$.
However, the direct variational methods employed in \cite{Kuebler-Weth} cannot 
be extended to this case since the operator
$$
L_\alpha \coloneqq -\Delta + \alpha^2 \del_{\theta}^2
$$
is neither elliptic nor degenerate elliptic, and the associated Rayleigh quotient becomes unbounded 
from below, see \cite[Remark 4.3]{Kuebler-Weth}. Indeed, note that in 
polar coordinates $(r,\theta) \in (0,1) \times 
(-\pi,\pi)$ we have
$$
L_\alpha u = - \del_{r}^2 u - \frac{1}{r} \del_r u - \left( \frac{1}{r^2} - 
\alpha^2 \right) \del_{\theta}^2 u
$$
and hence the operator is in fact of mixed-type for $\alpha>1$:
It is elliptic in the smaller ball $B_{1/\alpha}(0)$ of radius ${1}/{\alpha}$, 
parabolic on the 
sphere of radius ${1}/{\alpha}$ and hyperbolic in the annulus $\B \setminus \overline{B_{1/\alpha}(0)}$. 
In general, such operators are difficult to deal with via variational methods, 
and 
instead results often rely on 
separate treatments of the different regions of specific type and then gluing 
the solutions together, see e.g. \cite{Morawetz,Otway} for more details.

From a functional analytic viewpoint, the quadratic form associated to 
$L_\alpha$ is strongly 
indefinite, i.e., it is negative on an infinite-dimensional subspace. 
Classically, related problems have been treated for operators of the form 
$-\Delta - E$ on $\R^N$ where $E \in \R$ lies in a spectral gap of the 
Laplacian. In this direction, we mention the use of a dual variational 
framework in order to prove the existence of nonzero solutions of a nonlinear 
stationary Schr\"odinger equation in \cite{Alama-Li}, as well as
abstract operator theoretic methods used in \cite{Buffoni-Jeanjean-Stuart} for 
a related problem. However, both of these exemplary 
approaches require specific assumptions regarding spectral 
properties of the associated operator. Moreover, the sole existence of nonzero 
solutions to 
\eqref{Reduced equation} is insufficient in our case since we are interested in nontrivial \emph{rotating} wave solutions.

In the present case of problem~(\ref{Reduced equation}), a main obstruction, in addition to the unboundedness of the spectrum of the linear operator $L_\alpha$ from above and below, is the possible existence of finite accumulation points of this spectrum.
As a first step, we therefore analyze the spectrum of $L_\alpha$ in detail, which is closely 
related to the spectrum of the Laplacian and thus the zeros of Bessel 
functions. In fact, the Dirichlet eigenvalues of $L_\alpha$ are given by
$$
j_{\ell,k}^2-\alpha^2 \ell^2 ,
$$
where $\ell \in \N_0$, $k \in \N$ and $j_{\ell,k}$ denotes the $k$-th zero of 
the Bessel function of the first kind $J_{\ell}$. In particular, 
the 
structure of the spectrum heavily depends on the asymptotic behavior of 
the zeros of these Bessel functions. Despite this explicit characterization, it 
is not 
clear whether the spectrum of $L_\alpha$ only consists of isolated points. 
Indeed, known results on the asymptotics of the zeros of Bessel functions turn 
out to be insufficient to exclude accumulation points or even density in $\R$.
In fact, similar spectral issues arise in the study of radially symmetric 
time-periodic solutions of \eqref{NLKG} on balls  $B_a(0)$, where the 
spectral properties of the radial wave operator are intimately connected to the 
arithmetic properties of the ratio between the radius $a>0$ and the period 
length, see  e.g.
\cite{Mawhin-Survey, Berkovits-Mawhin} and the references therein for more 
details.

This turns out to be a serious obstruction for the use of variational methods 
and thus necessitates a detailed analysis of the asymptotic behavior of 
different sequences of zeros. Our first main result then characterizes the 
spectrum of 
$L_\alpha$ 
as follows.
\begin{theorem}  \label{Theorem: Spectrum Characterization - Introduction}
	For any $\alpha>1$ the spectrum of $L_\alpha$ is unbounded from above and 
	below. Moreover, there exists an unbounded sequence $(\alpha_n)_n  \subset 
	(1,\infty)$ such that the following properties hold for $n \in \N$:
        \begin{itemize}
        \item[(i)] The spectrum of $L_{\alpha_n}$ consists of eigenvalues with 
	finite multiplicity.
      \item[(ii)] There exists $c_n>0$ 
	such that for each 
	$\ell \in \N_0$, $k \in \N$ we either have $j_{\ell,k}^2-\alpha_n^2 
	\ell^2=0$ or
	\begin{equation} \label{eq: eigenvalue lower bound} 
	|j_{\ell,k}^2-\alpha_n^2 \ell^2| \geq c_n j_{\ell,k}.
	\end{equation} 
      \item[(iii)] The spectrum of $L_{\alpha_n}$ has no finite 
	accumulation points.
       \end{itemize}
\end{theorem}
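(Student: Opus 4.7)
I would handle unboundedness directly, then reduce (i) and (iii) to the separation estimate (ii), and finally construct the sequence $(\alpha_n)_n$ from a Debye-type asymptotic of the Bessel functions.

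For any $\alpha > 1$, unboundedness from above is immediate: taking $\ell = 0$ and $k \to \infty$ gives the subsequence of eigenvalues $j_{0,k}^2 \to +\infty$. Unboundedness from below follows by taking $k = 1$ and invoking the classical expansion $j_{\ell,1} = \ell + O(\ell^{1/3})$, so $j_{\ell,1}^2 - \alpha^2\ell^2 = (1-\alpha^2)\ell^2 + O(\ell^{4/3}) \to -\infty$ as $\ell \to \infty$. The reduction of (i) and (iii) to (ii) is elementary: since $j_{\ell,k} \geq \ell$, one has $j_{\ell,k} \leq j_{\ell,k} + \alpha\ell \leq (1+\alpha)j_{\ell,k}$, and the factorization
\[
|j_{\ell,k}^2 - \alpha^2 \ell^2| = |j_{\ell,k} - \alpha\ell|\,(j_{\ell,k} + \alpha\ell)
\]
shows that (ii) is equivalent to a uniform lower bound $|j_{\ell,k} - \alpha_n\ell| \geq c > 0$ whenever nonzero. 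Granting this, any nonzero eigenvalue $\lambda$ forces $j_{\ell,k} \leq |\lambda|/c_n$, which restricts attention to finitely many pairs $(\ell,k)$ and yields both (i) and (iii) away from $0$. The residual $\lambda = 0$ case reduces to showing that the equality $j_{\ell,k} = \alpha_n\ell$ holds for at most finitely many pairs, which will fall out of the same Debye analysis.

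The core of the argument is the construction of $\alpha_n$ and the verification of the separation estimate. Parametrizing $\alpha = \sec\beta$ with $\beta \in (0,\pi/2)$ and setting $\gamma(\alpha) := \tan\beta - \beta$, the map $\gamma$ is a homeomorphism $(1,\infty) \to (0,\infty)$. Debye's expansion (DLMF 10.19.6) gives, uniformly on compact subsets of $(0, \pi/2)$,
\[
J_\ell(\alpha \ell) = \sqrt{\tfrac{2}{\pi\ell\tan\beta}}\,\cos\!\bigl(\ell\gamma(\alpha) - \tfrac{\pi}{4}\bigr) + O(\ell^{-3/2}),
\]
together with a matching bound $|J_\ell'(\alpha\ell)| = O(\ell^{-1/2})$. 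I would define $\alpha_n$ implicitly by $\gamma(\alpha_n) = n\pi$, producing an unbounded sequence in $(1,\infty)$. The arithmetic advantage is that the phase $\ell\gamma(\alpha_n) - \pi/4 = n\ell\pi - \pi/4$ sits at distance $\pi/4$ from every zero of cosine, so $|\cos(\ell\gamma(\alpha_n) - \pi/4)| = \sqrt{2}/2$ for every $\ell \in \N$, giving $|J_\ell(\alpha_n \ell)| \geq c_0 \ell^{-1/2}$ for $\ell$ sufficiently large. A mean value argument combining this with the derivative bound then converts the nonvanishing of $J_\ell$ at $\alpha_n\ell$ into the uniform separation $|j_{\ell,k} - \alpha_n \ell| \geq c$ for the nearest zero, and hence for every zero of $J_\ell$. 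The finitely many exceptional small $\ell$ and the high-$k$ McMahon regime $j_{\ell,k} \gg \alpha_n\ell$ can be handled by direct inspection.

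The main obstacle is upgrading the pointwise Debye estimate to a \emph{uniform} quantitative bound valid across the full range of indices for which $j_{\ell,k} \sim \alpha_n\ell$, with explicit control of the error term in $\ell$ and in the local variable near $\alpha_n\ell$. One must also dovetail the Debye and McMahon regimes across the transition. These are presumably the ``new estimates for the zeros of Bessel functions'' announced in the abstract. Once the uniform estimate is in hand, the arithmetic choice $\gamma(\alpha_n) = n\pi$ delivers (ii) and, as sketched above, (i) and (iii) with only routine additional bookkeeping.
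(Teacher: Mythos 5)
Your approach is correct in outline and takes a genuinely different route from the paper, yet---and this is worth noting---it arrives at the \emph{same} sequence $(\alpha_n)_n$. Writing $\alpha = \sec\beta$, one has $\beta = \arccos(1/\alpha)$ and $\tan\beta = \sqrt{\alpha^2-1}$, so your condition $\gamma(\alpha_n) = \tan\beta_n - \beta_n = n\pi$ is exactly the paper's condition that $\sigma_n := f^{-1}(\alpha_n) = \pi\bigl(\sqrt{\alpha_n^2-1} - \arccos(1/\alpha_n)\bigr)^{-1}$ equal $1/n$ (cf.\ Lemma~\ref{Lemma: Monotonicity and Inverse for iota quotient}): the paper's rationality requirement on $\sigma_n$ and your phase condition $\gamma(\alpha_n) \in \pi\Z$ are one and the same. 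Where the proofs diverge is in how the separation estimate (ii) is verified. The paper works entirely with the zeros themselves: starting from the Watson integral formula for $dj_{\nu,k}/d\nu$ it sets up an ODE comparison between $\iota_k(x) = j_{xk,k}/k$ and the limit $\iota(x)$, proving the two-sided quantitative rate in Lemma~\ref{Lemma: iota bounds}, and then, after establishing that a hypothetical accumulation forces $\ell_i/k_i \to \sigma_n$, it splits into the two cases $\ell_i/k_i \gtrless \sigma_n$, exploiting monotonicity of $\nu \mapsto j_{\nu,k}$ and $\nu \mapsto j_{\nu,k}/\nu$ together with the fact that $\sigma_n k_i - \ell_i$, being in $\frac{1}{n}\Z$, has magnitude at least $1/n$. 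You instead evaluate $J_\ell$ \emph{off} its zeros, using the Debye expansion and the fortunate phase value $|\cos(n\ell\pi - \pi/4)| = 1/\sqrt2$ to get $|J_\ell(\alpha_n\ell)| \gtrsim \ell^{-1/2}$, and convert this to a zero-free interval by a mean-value argument. Your route is more direct and symmetric (no case split on the sign of $\ell_i/k_i - \sigma_n$, no need to first pin down the limit $\sigma$ of $\ell_i/k_i$), but it leans on uniform quantitative Debye asymptotics for both $J_\ell$ and $J_\ell'$ in a neighborhood of $\alpha_n\ell$, which is precisely the gap you flag. The paper instead derives its quantitative input (Lemma~\ref{Lemma: iota bounds} and the accompanying Remark~\ref{Remark: iota bounds estimate}) from more elementary integral comparisons, and that estimate for $j_{\nu,k}$ has some independent interest. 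In short: your argument makes the arithmetic role of $\alpha_n$ conceptually transparent, while the paper's stays within the asymptotics of the zeros and is more self-contained; both should yield the same set of admissible $\alpha$ once the uniform Debye bounds are justified.
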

The proof of this result is based on the observation that the formula
$$
j_{\ell,k}^2-\alpha^2 \ell^2=(j_{\ell,k}+\alpha \ell) \ell 
\left(\frac{j_{\ell,k}}{\ell}-\alpha\right)
$$  
implies that for any unbounded sequences $(\ell_i)_i$, $(k_i)_i$, the 
corresponding 
sequence of eigenvalues $j_{\ell_i,k_i}^2-\alpha^2 \ell_i^2$ can only remain 
bounded if 
\begin{equation} \label{eq:Intro:Eigenvalue condition}
\frac{j_{\ell_i,k_i}}{\ell_i}-\alpha \to 0
\end{equation}
as $i \to \infty$. It turns out that \eqref{eq:Intro:Eigenvalue condition} can 
only hold if ${\ell_i}/{k_i} \to \sigma$, where $\sigma = \sigma(\alpha)>0$ 
is uniquely determined and can be characterized via a transcendental equation. 
This motivates a more detailed investigation of $j_{\sigma 
k,k}$, $k \in \N$ which gives rise to a new estimate for $j_{\ell,k}$, $\ell 
\in \N_0$, $k \in \N$, see Lemma~\ref{Lemma: iota bounds} and 
Remark~\ref{Remark: 
iota 
bounds estimate} below. In order to estimate arbitrary sequences in 
\eqref{eq:Intro:Eigenvalue condition}, we 
are then forced to restrict the problem to velocities $\alpha = 
\alpha_n$ such that the associated values 
$\sigma_n=\sigma(\alpha_n)$ are suitable rational numbers.
The fact that such a restriction is necessary is not surprising when compared 
to similar properties observed for the radial wave operator as mentioned above.

Theorem~\ref{Theorem: Spectrum Characterization - Introduction} then plays a 
central role in the formulation of a variational framework for \eqref{Reduced 
equation} and allows us to recover sufficient regularity properties for 
$L_{\alpha_n}$. More specifically, for $\alpha=\alpha_n$ we may then define a 
suitable Hilbert space $E_{\alpha,m}$ whose norm is related to the 
quadratic form 
$$
u \mapsto  \int_\B \left( |\nabla  u|^2 - \alpha^2 |\del_\theta 
u|^2 +m u^2 \right)\, dx,
$$
see Section~\ref{Section: Variational Formulation} 
below 
for details. 
The space $E_{\alpha,m}$ admits a decomposition of the form
$$
E_{\alpha,m} = E_{\alpha,m}^+ \oplus F_{\alpha,m} ,
$$
where the spaces $E_{\alpha,m}^+$ and $ F_{\alpha,m}$ essentially correspond to 
the 
eigenspaces of positive and nonpositive eigenvalues of $-\Delta + \alpha^2 
\del_\theta^2 +m$, respectively. Crucially, the estimate \eqref{eq: eigenvalue 
lower bound} and 
fractional Sobolev embeddings allow us to deduce that $E_{\alpha,m}$ compactly 
embeds 
into $L^p(\B)$ for $p \in (2,4)$.

We may then find solutions of \eqref{Reduced equation} as critical points of 
the associated energy functional $\Phi_{\alpha,m}: E_{\alpha,m} \to \R$ given by
$$
\Phi_{\alpha,m}(u)\coloneqq 
\frac{1}{2} \int_\B \left( 
|\nabla  
u|^2 - 
\alpha^2 |\del_\theta u|^2 + m u^2 \right) \, dx - \frac{1}{p} \int_\B |u|^p \, 
dx . 
$$
Due to the strongly indefinite nature of \eqref{Reduced equation}, 
$\Phi_{\alpha,m}$ is 
unbounded from above and below and does not possess a mountain pass structure 
so, in particular, the classical mountain pass theorem and its variants are not 
applicable. 
Instead, we consider the generalized Nehari manifold introduced by 
Pankov~\cite{Pankov}
$$
\cN_{\alpha,m}\coloneqq \left\{ u \in E_{\alpha,m} \setminus F_{\alpha,m}: \ 
\Phi_{\alpha,m}'(u) u=0 \ \text{and } \Phi_{\alpha,m}'(u)v = 0 \ \text{for all 
$v \in F_{\alpha,m}$} \right\} .
$$
Using further abstract results due to Szulkin and Weth~\cite{Szulkin-Weth}, we 
can then 
show that
$$
c_{\alpha,m}=\inf_{u \in \cN_{\alpha,m}} \Phi_{\alpha,m}(u)
$$
is positive and attained by a critical point of $\Phi_{\alpha,m}$ for 
$\alpha=\alpha_n$ as 
in Theorem~\ref{Theorem: Spectrum Characterization - Introduction} 
and $m \in \R$. In particular, such a 
minimizer then necessarily has minimal energy among all critical points of 
$\Phi_{\alpha,m}$, and is therefore referred to as a \emph{ground state 
solution} or \emph{ground state} of 
\eqref{Reduced equation}.

In general, it is not clear whether such a ground state is nonradial.
Our second main result further states that \eqref{Reduced equation} has 
nonradial ground state solutions for certain choices of parameters.
\begin{theorem} \label{Theorem: Existence of nonradial solutions - Introduction}
	Let $p \in (2,4)$ and let the sequence $(\alpha_n)_n  \subset 
	(1,\infty)$ be given by Theorem~\ref{Theorem: Spectrum Characterization - 
	Introduction}. Then the following properties hold:
	\begin{itemize}
		\item[(i)]
		For any $n \in \N$ and $m \in \R$ there exists a ground state solution 
		of \eqref{Reduced equation} for $\alpha=\alpha_n$.
		\item[(ii)]
		For any $n \in \N$ there exists $m_n>0$ 
		such that the ground state solutions of \eqref{Reduced equation} are 
		nonradial 
		for 
		$\alpha = \alpha_n$ and $m >m_n$. 
	\end{itemize}
\end{theorem}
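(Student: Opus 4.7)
The plan is to combine the Szulkin--Weth critical point theory sketched in the introduction with a minimax comparison against the best radial level of $\Phi_{\alpha_n,m}$.

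For part (i), I would apply the abstract result of Szulkin and Weth~\cite{Szulkin-Weth} to $\Phi_{\alpha_n,m}$ on $E_{\alpha_n,m}=E_{\alpha_n,m}^+\oplus F_{\alpha_n,m}$. The functional is $C^1$, strongly indefinite and super-quadratic; the only nontrivial hypothesis is the weak-to-norm continuity of $u\mapsto \|u\|_p^p$, which reduces to the compact embedding $E_{\alpha_n,m}\hookrightarrow L^p(\B)$ for $p\in(2,4)$. This is where Theorem~\ref{Theorem: Spectrum Characterization - Introduction}(ii) enters: the lower bound $|j_{\ell,k}^2-\alpha_n^2\ell^2|\geq c_n j_{\ell,k}$ (with $j_{\ell,k}$ comparable to the square root of the Laplace eigenvalue) means that the $E_{\alpha_n,m}$-norm dominates $H^{1/2}(\B)$ modulo finite-dimensional corrections, and in dimension two $H^{1/2}(\B)\hookrightarrow L^q(\B)$ is compact for every $q<4$. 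Once this is in place, the Szulkin--Weth theorem yields a minimizer of $\Phi_{\alpha_n,m}$ on $\cN_{\alpha_n,m}$ realizing $c_{\alpha_n,m}>0$.

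For part (ii), I would show $c_{\alpha_n,m}<c^{\rad}_{m}$ for $m$ large, where $c^{\rad}_{m}$ is the infimum of $\Phi_{\alpha_n,m}$ over the generalized Nehari manifold restricted to radial functions. Since $\del_\theta u\equiv 0$ on radial functions, this restriction reduces to the classical Nehari energy of $-\Delta u+m u=|u|^{p-2}u$ on $\B$ with Dirichlet boundary conditions; for $m$ large the form is positive definite on the radial subspace, and a standard scaling / Gagliardo--Nirenberg argument gives $c^{\rad}_{m}\to\infty$ as $m\to\infty$. It therefore suffices to produce a uniform-in-$m$ upper bound on $c_{\alpha_n,m}$, after which any ground state must leave the radial subspace and hence be nonradial.

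The main obstacle is the uniform upper bound. I would exploit the minimax characterization
\[
c_{\alpha_n,m}=\inf_{u\in E_{\alpha_n,m}^+\setminus\{0\}}\ \sup_{v\in \R^+ u\,\oplus\, F_{\alpha_n,m}}\Phi_{\alpha_n,m}(v),
\]
tested against $u_m(r,\theta)=\phi(r)\cos(\ell(m)\theta)$, where $\phi\in C^\infty_c((r_0,1))$ is fixed with $r_0>1/\alpha_n$ (so $\supp u_m$ sits in the hyperbolic region of $L_{\alpha_n}$) and $\ell(m)\in\N$ is chosen so that $\ell(m)^2(\alpha_n^2-1)\approx m$, thereby balancing the negative angular contribution $-\alpha_n^2\ell(m)^2\int u_m^2$ against the mass term $m\int u_m^2$ to leading order. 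A direct polar computation then gives $\int(|\nabla u_m|^2-\alpha_n^2|\del_\theta u_m|^2+m u_m^2)\,dx=O(1)$, while $\|u_m\|_p^p$ tends to a positive constant because $\int_0^{2\pi}|\cos(\ell\theta)|^p\,d\theta$ converges as $\ell\to\infty$. This controls $\sup_{t>0}\Phi_{\alpha_n,m}(tu_m^+)$ uniformly, where $u_m^+$ denotes the projection of $u_m$ onto $E^+_{\alpha_n,m}$. The delicate remaining point is to extend this to the full supremum over $\R^+u_m^+\oplus F_{\alpha_n,m}$: one must rule out that an $F$-maximizer absorbs an unbounded share of the $L^p$-mass. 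This is where the spectral gap of Theorem~\ref{Theorem: Spectrum Characterization - Introduction}(ii) re-enters together with the compact embedding from (i): they force any such maximizer into a ball of the form $\{\|w\|_{E_{\alpha_n,m}}\leq C\|u_m\|_p\}$, from which the uniform bound on the whole supremum follows and the comparison $c_{\alpha_n,m}<c^{\rad}_{m}$ closes the argument.
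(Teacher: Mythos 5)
Your part (i) is essentially the paper's argument: verify the Szulkin--Weth hypotheses, with the compact embedding $E_{\alpha_n,m}\hookrightarrow L^p(\B)$ for $p\in(2,4)$ supplied by the spectral lower bound $|j_{\ell,k}^2-\alpha_n^2\ell^2|\geq c_n j_{\ell,k}$ via an $H^{1/2}$ intermediate space. That matches Proposition~\ref{Proposition: K properties} and Proposition~\ref{Prop: Existence of Ground States and Minimax Characterization}.

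Part (ii) has a genuine gap. The central claim that a fixed radial bump $\phi\in C^\infty_c((r_0,1))$ with $\ell(m)^2(\alpha_n^2-1)\approx m$ gives a \emph{uniformly bounded} quadratic form is false. In polar coordinates the $\theta$-dependent part of the quadratic form applied to $u_m=\phi(r)\cos(\ell\theta)$ is
\[
\pi\int_{r_0}^1\Bigl[\,|\phi'(r)|^2+\Bigl(m-\ell^2\bigl(\alpha_n^2-\tfrac{1}{r^2}\bigr)\Bigr)\phi(r)^2\Bigr]r\,dr ,
\]
and the coefficient $m-\ell^2(\alpha_n^2-1/r^2)$ cannot vanish uniformly over $\supp\phi$, because $\alpha_n^2-1/r^2$ genuinely varies on $(r_0,1)$ while $\ell^2$ is a single constant. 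With $\ell^2(\alpha_n^2-1)\approx m$ the residual coefficient equals $m\cdot(1/r^2-1)/(\alpha_n^2-1)$, which is of order $m$ on $\supp\phi\subset(r_0,1)$, $r_0<1$. So the quadratic form is $O(m)$, not $O(1)$, and the ensuing "uniform-in-$m$ bound on $c_{\alpha_n,m}$" does not hold. This is consistent with the fact that your argument nowhere uses $p<4$ in the energy comparison; if it were correct it would yield nonradiality for all $p>2$, which is much stronger than the theorem and should be treated as a warning sign.

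The paper avoids both difficulties by testing the minimax value with a \emph{single eigenfunction} $u_0=\phi_{\ell_0,k_0}\in E_{\alpha,m}^+$ realizing the smallest positive eigenvalue $\mu^+(m):=\inf_{(\ell,k)\in\cI^+_{\alpha,m}}(j_{\ell,k}^2-\alpha^2\ell^2+m)$. Because $u_0\in E^+$ and is $L^2$-orthogonal to $F_{\alpha,m}$, one has for any $v\in F_{\alpha,m}$
\[
\Phi_{\alpha,m}(tu_0+v)\leq \frac{t^2}{2}\,\mu^+(m)-\frac{t^p}{p}\,|\B|^{1-\frac p2},
\]
which gives $c_{\alpha,m}\leq(\tfrac12-\tfrac1p)|\B|\,\mu^+(m)^{\frac{p}{p-2}}$ (Lemma~\ref{Lemma: Existence of m with nonradial ground states}), settling your "delicate remaining point" about the supremum over $\R^+u\oplus F_{\alpha,m}$ without any compactness argument. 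Then Proposition~\ref{Proposition: Eigenvalue eps bound} shows $\mu^+(m)=O(m^{1/2})$, hence $c_{\alpha,m}=O(m^{p/(2(p-2))})$; this is unbounded but grows strictly slower than $\beta_m^{\rm rad}\gtrsim m^{2/(p-2)}$ exactly when $p/(2(p-2))<2/(p-2)$, i.e.\ when $p<4$. If you wish to salvage your compactly supported test functions, you would need the radial bump to shrink with $m$ (say support of width $\delta\sim m^{-1/3}$ near $r=1$), which yields $c_{\alpha_n,m}=O(m^{(p+2)/(3(p-2))})$ and still beats the radial rate for $p<4$; but it is strictly weaker than the paper's eigenfunction bound and more work to make rigorous, in particular the projection of $u_m$ onto $E^+_{\alpha_n,m}$ is then nontrivial and $m$-dependent.
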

In fact, we can prove a slightly more general result in the sense that the 
statement of Theorem~\ref{Theorem: Existence of nonradial solutions - 
Introduction} holds 
whenever the kernel of $L_{\alpha}$ is finite-dimensional 
and an inequality of the form \eqref{eq: eigenvalue lower bound} holds. 
The proof is essentially based on an energy comparison, noting that the minimal 
energy of the unique positive radial solution can be estimated from below in 
terms of $m$. 
Using a minimax characterization of $c_{\alpha,m}$, we can then show that this 
ground state energy grows slower than the radial energy as $m \to \infty$. 

Throughout the paper, we only consider real-valued solutions and consequently 
let all function spaces be real. Nonradial complex-valued solutions of 
\eqref{Reduced equation}, on the 
other hand, 
can be found much more easily using constrained minimization over suitable 
eigenspaces. This technique has been applied to a related problem in 
\cite{Taylor}. We point out, however, that the modulus of such solutions is 
necessarily 
radial, while Theorem \ref{Theorem: Existence of nonradial solutions - 
Introduction} yields solutions with nonradial modulus.
With our methods, by combining \eqref{eq: Rotating solution ansatz} with a 
standing wave ansatz, we can also prove the existence of genuinely 
complex-valued 
ground states with nonradial modulus, see the appendix of this paper.

The paper is organized as follows. In Section~\ref{Section: Preliminaries}, we 
introduce Sobolev spaces via their spectral characterization and collect 
several known results on the properties of the zeros of Bessel functions. 
In Section~\ref{Section:Asymptotics of Zeros} we then prove a crucial technical 
estimate for certain sequences of such zeros.
This result is subsequently used in Section~\ref{Section: Spectral 
Characterization} 
to investigate the asymptotics of the zeros of Bessel functions in 
detail and, 
in particular, prove Theorem~\ref{Theorem: Spectrum Characterization - 
Introduction}.
Section~\ref{Section: Variational Formulation} is then devoted to the rigorous 
formulation of the variational framework outlined earlier and the proof of 
Theorem~\ref{Theorem: Existence of nonradial solutions - Introduction}. 
In Appendix~\ref{Appendix: Complex Case}, we discuss the results 
for complex-valued solutions mentioned above.
%
%
%
%
\subsection*{Acknowledgments}
The author thanks Tobias Weth for helpful discussions and comments.
%
%
%
%
%
\section{Preliminaries}
\label{Section: Preliminaries}
We first collect some general facts on eigenvalues and eigenfunctions of the 
Laplacian on $\B$, we refer to \cite{Grebenkov-Nguyen} for a more comprehensive 
overview.
Recall that the eigenvalues of the problem 
\begin{equation*} 
	\left\{ 
	\begin{aligned} 
		-\Delta u  & = \lambda u \quad && \text{in $\B$} \\
		u & = 0 && \text{on $\del \B$}
	\end{aligned}
	\right.
\end{equation*}
are given by $j_{\ell,k}^2$, where $j_{\ell,k}$ denotes the $k$-th zero of the 
Bessel function of the first kind $J_\ell$ with $k \in \N_0$, $l \in \N$. To 
each eigenvalue 
$j_{\ell,k}^2$ correspond two linearly independent eigenfunctions
\begin{equation}  \label{eq:eigenfunction-representation}
\begin{aligned}
	\phi_{\ell,k}(r,\theta)& \coloneqq A_{\ell,k} \cos(\ell \theta) J_\ell(j_{\ell,k} r) \\
	\psi_{\ell,k}(r,\theta)& \coloneqq B_{\ell,k} \sin(\ell \theta) 
	J_\ell(j_{\ell,k} r) ,
\end{aligned}
\end{equation}
where the constants $A_{\ell,k}, B_{\ell,k}>0$ are chosen such that $\|\phi_{\ell,k}(r,\theta)\|_2=\|\psi_{\ell,k}(r,\theta)\|_2=1$.
These functions constitute an orthonormal basis of $L^2(\B)$ and we can then 
characterize Sobolev spaces as follows:
$$
H^1_0(\B) \coloneqq \left\{ u \in L^2(\B): 
\|u\|_{H^1}^2\coloneqq\sum_{\ell=0}^\infty \sum_{k=1}^\infty j_{\ell,k}^2 
\left(|\langle u,\phi_{\ell,k} \rangle|^2 + |\langle u,\psi_{\ell,k} \rangle|^2 
\right) < \infty \right\} .
$$
It can be shown that this is consistent with the usual definition of $H^1(\B)$. By classical Sobolev embeddings, $H_0^1(\B)$ compactly maps into $L^p(\B)$ for any $1 \leq p < \infty$. 

Similarly, we consider the fractional Sobolev spaces 
$$
H^s_0(\B) \coloneqq \left\{ u \in L^2(\B): 
\|u\|_{H^s}^2\coloneqq\sum_{\ell=0}^\infty \sum_{k=1}^\infty j_{\ell,k}^{2s} 
\left(|\langle u,\phi_{\ell,k} \rangle|^2 + |\langle u,\psi_{\ell,k} \rangle|^2 
\right) < \infty \right\} 
$$
for $s \in (0,1)$. Using interpolation, it can be shown that this is equivalent 
to the classical definition and $H^s_0(\B)$ compactly maps into $L^p(\B)$ for 
$p< \frac{2}{1-s}$, i.e., there exists $C_s>0$ such that
\begin{equation*} 
\|u\|_p \leq C_s \|u\|_{H^s_0(\B)}	
\end{equation*}
holds for $u \in H^1_0(\B)$.

Next, we collect several results on the properties of zeros Bessel functions, see e.g. \cite{Elbert-survey} for a more extensive overview.
In the following, we let $j_{\nu,k}$ denote the $k$-th zero of the Bessel function $J_\nu$, where $\nu \geq 0$, $k \in \N$. 
By definition, $j_{\nu,k}<j_{\nu,k+1}$. 
\begin{proposition} \label{Prop: Properties of Bessel function zeros}
	For each fixed $k \in \N$, $j_{\nu,k}$ is increasing with respect to $\nu$.
	Moreover, the following properties hold:
	\begin{itemize}
		\item[(i)] (\cite{Qu-Wong})
		We have
		\begin{equation*} 
			\nu + \frac{|a_k|}{2^\frac{1}{3}} \nu^\frac{1}{3} < j_{\nu,k} < \nu + \frac{|a_k|}{2^\frac{1}{3}} \nu^\frac{1}{3} + \frac{3}{20} |a_k|^2 \frac{2^\frac{1}{3}}{\nu^\frac{1}{3}}
		\end{equation*}
		where $a_k$ denotes the $k$-th negative zero of the Airy function $\mathrm{Ai}(x)$.
		\item[(ii)] (\cite{McCann}) 
		For each fixed $k \in \N$ the map 
		$$
		\nu \mapsto \frac{j_{\nu,k}}{\nu}
		$$
		is strictly decreasing on $(0,\infty)$.
		\item[(iii)] (\cite{Elbert-Laforgia: Further Results}) 
		For $k \in \N$ it holds that
		$$
		\pi k - \frac{\pi}{4}<j_{0,k} \leq \pi k - \frac{\pi}{4} + \frac{1}{8 \pi (k-\frac{1}{4})} .
		$$		
		\item[(iv)] (\cite{Elbert-Gatteschi-Laforgia}) 
		For each fixed $k \in \N$ the map $\nu \mapsto j_{\nu,k}$ is differentiable on $(0,\infty)$ and
		$$
		\frac{dj_{\nu,k}}{d\nu} \in \left(1,\frac{\pi}{2}\right) 
		$$
		for $\nu \geq 0$.
	\end{itemize} 
\end{proposition}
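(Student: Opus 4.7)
The plan is essentially to assemble the statement from the literature, since each of (i)--(iv) is attributed to a specific reference; there is no new mathematical content to prove beyond verifying that the stated inequalities agree (up to notation and possibly minor rescaling) with what appears in \cite{Qu-Wong}, \cite{McCann}, \cite{Elbert-Laforgia: Further Results}, and \cite{Elbert-Gatteschi-Laforgia}. The first claim---monotonicity of $\nu \mapsto j_{\nu,k}$ at fixed $k$---is then not a separate assertion to prove, but rather an immediate corollary of (iv): since $dj_{\nu,k}/d\nu > 1 > 0$ for $\nu \geq 0$, the map is strictly increasing on $[0,\infty)$.

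If one wanted an independent, self-contained argument for the monotonicity (as a sanity check), I would use a Sturm comparison. Writing $u(x) = \sqrt{x}\, J_\nu(x)$ transforms the Bessel equation into the Liouville normal form
\[
u'' + \Bigl( 1 + \frac{1 - 4\nu^2}{4 x^2} \Bigr) u = 0 ,
\]
whose coefficient is strictly decreasing in $\nu$ for $\nu \geq 0$. The Sturm comparison theorem then forces each zero of $u$ on $(0,\infty)$ (which coincides with a zero of $J_\nu$) to move strictly to the right as $\nu$ increases. This is the standard elementary derivation and provides a cross-check on (iv).

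For the remaining items I would simply quote the cited papers verbatim. Item (i) is a refinement of the McMahon/Olver uniform expansion of $j_{\nu,k}$ in the transition region, established in \cite{Qu-Wong} via a careful analysis of Airy-type asymptotics; the key feature used later in this paper is the two-sided sandwich with the same leading term $\nu + |a_k|\nu^{1/3}/2^{1/3}$. Item (ii) is the global monotonicity of $\nu \mapsto j_{\nu,k}/\nu$ proven by McCann using differential-equation methods. Item (iii) isolates sharp upper and lower bounds for the zeros of $J_0$ and follows from the asymptotic expansion of $J_0$ together with positivity of an explicit error term, as carried out in \cite{Elbert-Laforgia: Further Results}. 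Item (iv) comes from the integral representation of $dj_{\nu,k}/d\nu$ in terms of $J_\nu$ and $K_0$, from which the bounds $1 < dj_{\nu,k}/d\nu < \pi/2$ are derived in \cite{Elbert-Gatteschi-Laforgia}.

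There is no genuine obstacle here: the only care needed is to ensure the normalization of Bessel zeros and the indexing conventions in each reference match those used in this paper, and to check that the inequalities in (i) and (iii) are stated in the form that will actually be invoked in Sections~\ref{Section:Asymptotics of Zeros} and \ref{Section: Spectral Characterization}. Any harder work---the sharp constants in (i), the monotonicity in (ii), and the derivative bounds in (iv)---is deferred to the cited sources.
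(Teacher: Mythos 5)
Your proposal is correct and matches the paper's approach: the paper provides no proof of Proposition~\ref{Prop: Properties of Bessel function zeros}, treating it as a catalogue of facts quoted from \cite{Qu-Wong}, \cite{McCann}, \cite{Elbert-Laforgia: Further Results}, and \cite{Elbert-Gatteschi-Laforgia}, exactly as you propose. Your added observations---that the lead-in monotonicity claim is an immediate consequence of (iv), and the independent Sturm-comparison cross-check via the Liouville normal form $u'' + \bigl(1 + (1-4\nu^2)/(4x^2)\bigr)u = 0$---are both correct and go slightly beyond what the paper records.
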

The zeros of the Airy function can in turn be estimated (see \cite{Breen}) by
\begin{equation*} 
\left( \frac{3\pi}{8} (4k-1.4) \right)^\frac{2}{3}<|a_k|< \left( \frac{3\pi}{8} (4k-0.965)\right)^\frac{2}{3} 
\end{equation*}
for $k \in \N$,
which yields the following result:
\begin{corollary}    \label{Corollary: Combined bounds}
	Let $j_{\nu,k} \in \R$ be defined as above. Then 	
	$$
	\nu+\frac{\left( \frac{3\pi}{8} (4k-2) \right)^\frac{2}{3}}{2^\frac{1}{3}} \nu^\frac{1}{3} 
	< j_{\nu,k}
	< \nu + \frac{\left( \frac{3\pi}{2} k\right)^\frac{2}{3}}{2^\frac{1}{3}} \nu^\frac{1}{3} + \frac{3}{20} \left( \frac{3\pi}{2} k\right)^\frac{4}{3} \frac{2^\frac{1}{3}}{\nu^\frac{1}{3}} .
	$$
\end{corollary}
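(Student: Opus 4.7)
The plan is to derive both inequalities by directly substituting the stated bounds on $|a_k|$ into Proposition~\ref{Prop: Properties of Bessel function zeros}(i), and then relax each resulting numerical expression to the cleaner form appearing in the corollary. No new analytic input is needed; the work is purely arithmetic, ensuring that the replacements go in the correct direction (weaker lower bound, weaker upper bound).

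For the lower bound, I start from $j_{\nu,k} > \nu + \frac{|a_k|}{2^{1/3}} \nu^{1/3}$ and insert the lower estimate $|a_k| > \bigl(\tfrac{3\pi}{8}(4k-1.4)\bigr)^{2/3}$. Since the map $t\mapsto t^{2/3}$ is increasing on $(0,\infty)$ and $4k-1.4 \geq 4k-2 > 0$ for all $k\in\N$, I can further bound $\bigl(\tfrac{3\pi}{8}(4k-1.4)\bigr)^{2/3} \geq \bigl(\tfrac{3\pi}{8}(4k-2)\bigr)^{2/3}$, which yields the desired lower estimate.

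For the upper bound, I use $j_{\nu,k} < \nu + \frac{|a_k|}{2^{1/3}} \nu^{1/3} + \frac{3}{20}|a_k|^{2} \frac{2^{1/3}}{\nu^{1/3}}$ and the upper estimate $|a_k| < \bigl(\tfrac{3\pi}{8}(4k-0.965)\bigr)^{2/3}$. To reach the form stated in the corollary, I relax $4k-0.965 < 4k$, so that $|a_k| < \bigl(\tfrac{3\pi}{8}\cdot 4k\bigr)^{2/3} = \bigl(\tfrac{3\pi}{2}k\bigr)^{2/3}$, and similarly $|a_k|^2 < \bigl(\tfrac{3\pi}{2}k\bigr)^{4/3}$. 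Plugging these two inequalities into the two corresponding terms of the upper bound gives exactly the stated right-hand side.

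There is no real obstacle here: the entire argument is a monotonicity chase, and the only care required is to keep the direction of inequalities consistent (taking a larger quantity for the upper bound and a smaller one for the lower bound) after applying the increasing function $t\mapsto t^{s}$ with $s>0$. The resulting symmetric-looking expression in $k$ will be the form that is convenient for the asymptotic analysis in Section~\ref{Section:Asymptotics of Zeros}.
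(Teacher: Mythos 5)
Your argument is correct and is exactly the intended derivation: the paper offers no explicit proof of the corollary, simply noting that Breen's bounds on $|a_k|$ ``yield the following result,'' and your monotonicity chase is precisely that substitution, with the inequalities pointing the right way on both sides. Nothing to add.
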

%
%
%
%
%
%
%
%
\section{Asymptotics of the Zeros of Bessel Functions}
\label{Section:Asymptotics of Zeros}

In order to study $L_\alpha$ in Section~\ref{Section: Spectral 
Characterization}, we will be particularly interested in the asymptotics of the 
zeros $j_{\nu,k}$ 
when the ratio ${\nu}/{k}$ remains fixed. For this case, we note the 
following 
result by Elbert and Laforgia:
\begin{theorem} \label{Theorem: Elbert-Laforgia Asymptotic Relation}
	{\bf(\cite{Elbert-Laforgia})} \\
	Let $x>-1$ be fixed. Then 
	$$
	\lim_{k \to \infty} \frac{j_{x k,k}}{k} \eqqcolon \iota(x) 
	$$
	exists. Moreover, $\iota(x)$ is given by
	$$
	\iota(x) = \begin{cases}
		\pi , \qquad & \text{$x=0$,} \\
		\frac{x}{\sin \phi} & \text{$x \neq 0$}
	\end{cases}
	$$
	where $\phi=\phi(x) \in [-\frac{\pi}{2},\frac{\pi}{2}]$ denotes the unique solution of
	\begin{equation} \label{eq: iota definition identity} 
		\frac{\sin \phi}{\cos \phi - (\frac{\pi}{2}-\phi) \sin \phi} = \frac{x}{\pi} .
	\end{equation}
\end{theorem}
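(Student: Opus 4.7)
My plan is to use the Debye uniform asymptotic expansion of $J_\nu$ in the oscillatory regime $z > \nu$, which governs every zero of $J_\nu$ since $j_{\nu,k}>\nu$ for $\nu \geq 0$. For $\nu > 0$ and $\beta \in (0, \pi/2)$, the Debye formula gives
\[
J_\nu(\nu \sec \beta) = \sqrt{\tfrac{2}{\pi\nu \tan\beta}}\Bigl[\cos\bigl(\nu(\tan\beta - \beta) - \tfrac{\pi}{4}\bigr) + O(\nu^{-1})\Bigr],
\]
with the error uniform for $\beta$ in compact subintervals of $(0, \pi/2)$. Parametrizing $j_{\nu,k} = \nu \sec \beta_{\nu,k}$ with $\beta_{\nu,k} \in (0,\pi/2)$, the $k$-th zero is captured by the phase condition
\[
\nu(\tan \beta_{\nu,k} - \beta_{\nu,k}) = (k - \tfrac{1}{4})\pi + O(\nu^{-1}).
\]

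Substituting $\nu = xk$ with $x>0$ and dividing by $k$ gives $x(\tan \beta_{xk,k} - \beta_{xk,k}) = \pi + O(k^{-1})$. Since $\beta \mapsto \tan\beta - \beta$ is a strictly increasing bijection from $(0, \pi/2)$ onto $(0, \infty)$, the inverse function theorem yields a unique $\beta^\ast \coloneqq \beta^\ast(x) \in (0, \pi/2)$ with $x(\tan\beta^\ast - \beta^\ast) = \pi$, and $\beta_{xk,k} \to \beta^\ast$ as $k\to\infty$. Consequently
\[
\frac{j_{xk,k}}{k} = x \sec \beta_{xk,k} \longrightarrow \frac{x}{\cos \beta^\ast} \eqqcolon \iota(x).
\]
To match the formula in the theorem I would then set $\phi \coloneqq \tfrac{\pi}{2} - \beta^\ast \in (0,\tfrac{\pi}{2})$, so that $\sin\phi = \cos\beta^\ast$ and $\cos\phi = \sin\beta^\ast$; clearing the denominator $\cos\phi - (\tfrac{\pi}{2}-\phi)\sin\phi$ in $\tan\beta^\ast - \beta^\ast = \tfrac{\cos\phi - (\pi/2-\phi)\sin\phi}{\sin\phi}$ converts $x(\tan\beta^\ast - \beta^\ast) = \pi$ into precisely \eqref{eq: iota definition identity}, and the limit becomes $\iota(x) = x/\sin\phi$. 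The case $x = 0$ is handled independently by Proposition~\ref{Prop: Properties of Bessel function zeros}(iii), which gives $j_{0,k}/k \to \pi = \iota(0)$.

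The hard part will be to justify rigorously that the zeros of $J_\nu$ are in a bijective, controlled correspondence with the phase jumps of the cosine in the Debye asymptotic; in particular, one must rule out spurious zeros coming from the $O(\nu^{-1})$ error term and ensure that the implicit phase relation for $\beta_{\nu,k}$ holds uniformly as $\nu$ and $k$ tend to infinity together. I would address this via a Sturm comparison between the Bessel equation on $(\nu,\infty)$ and its trigonometric approximation, reinforced by the monotonicity and derivative bounds of Proposition~\ref{Prop: Properties of Bessel function zeros}(ii) and (iv), which together pin down the zero count to within $O(\nu^{-1})$. Once this uniformity is established, passing to the limit is a routine continuity argument, and uniqueness of $\phi \in [-\pi/2,\pi/2]$ follows from the strict monotonicity of the left-hand side of \eqref{eq: iota definition identity} in $\phi$.
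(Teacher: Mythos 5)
Your proposal takes a genuinely different route from the one in \cite{Elbert-Laforgia} (which the paper cites and partially reprises in Section~3): Elbert and Laforgia set $\iota_k(x) = j_{kx,k}/k$, use the Watson integral formula for $\tfrac{d}{d\nu}j_{\nu,k}$ to show that $\iota_k$ solves the ODE~\eqref{eq: F_k definition}, and then prove pointwise convergence of $\iota_k$ to the solution $\iota$ of the limiting ODE~\eqref{eq: G definition}; the closed form for $\iota$ is extracted from the resulting integral identity. You instead read the limit off the Debye/Olver uniform asymptotics of $J_\nu$ in the oscillatory regime $z = \nu\sec\beta$, via the phase condition $\nu(\tan\beta_{\nu,k} - \beta_{\nu,k}) \approx (k-\tfrac14)\pi$. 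Your algebra is correct: with $\phi = \tfrac{\pi}{2}-\beta^\ast$ one has $\tan\beta^\ast - \beta^\ast = \tfrac{\cos\phi - (\pi/2-\phi)\sin\phi}{\sin\phi}$, so $x(\tan\beta^\ast-\beta^\ast)=\pi$ is exactly \eqref{eq: iota definition identity}, and $\iota(x) = x/\cos\beta^\ast = x/\sin\phi$. Your approach has the merit of making the geometric origin of the transcendental equation transparent (it is the WKB phase integral $\int_\nu^z\sqrt{1-\nu^2/t^2}\,dt$), whereas the ODE route is softer and avoids any uniform-asymptotics machinery.

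That said, there is a real gap, which you partially acknowledge but underestimate. The Debye expansion is uniform only for $\beta$ in compact subintervals of $(0,\tfrac{\pi}{2})$; however, the zero \emph{index} $k$ is counted from the turning point $z=\nu$, where the expansion fails and an Airy-type transition layer takes over. To assert that the $k$-th zero of $J_\nu$ coincides with the $k$-th crossing $\nu(\tan\beta-\beta)=(k-\tfrac14)\pi$ up to a controlled error, you must match the phase count through the transition region uniformly as $\nu,k \to \infty$ together. Sturm comparison and Proposition~\ref{Prop: Properties of Bessel function zeros}(ii),(iv) control monotonicity of the zeros in $\nu$ but do not by themselves tie the index $k$ to the Debye phase; you genuinely need a uniform Olver-type (Airy) asymptotic for $J_\nu$ near $z=\nu$, or an equivalent global WKB error bound with a uniform $O(1)$ phase defect, to close this. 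Separately, your parametrization $\beta^\ast \in (0,\tfrac{\pi}{2})$ gives $\phi \in (0,\tfrac{\pi}{2})$ and hence covers only $x>0$ (and $x=0$ by the separate McMahon estimate); the theorem as stated allows $x\in(-1,0)$, corresponding to $\phi<0$, which your argument does not address. This last point is harmless for the paper's applications, where only $x=\ell/k\geq 0$ arises, but it should be noted as a restriction.
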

Moreover, we note the following properties of a function associated to $\iota$.
\begin{lemma} \label{Lemma: Monotonicity and Inverse for iota quotient}
	The map 
	\begin{equation*} 
		f: (0,\infty) \to \R, \qquad f(x)= \frac{\iota(x)}{x}
	\end{equation*}
	is strictly decreasing and satisfies
	$$
	\lim_{x \to 0}  f(x) = \infty, \qquad \lim_{x \to \infty} f(x) = 1 .
	$$
	Moreover, its inverse is explicitly given by
	$$
	f^{-1}: (1,\infty) \to \R, \qquad f^{-1}(y) = \frac{\pi}{\sqrt{y^2- 1} - \left(\frac{\pi}{2} - \arcsin \frac{1}{y}\right) } .
	$$
\end{lemma}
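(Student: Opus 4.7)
The plan is to exploit the explicit parametrization of $\iota$ provided by Theorem~\ref{Theorem: Elbert-Laforgia Asymptotic Relation}: for $x>0$ we have $f(x) = \iota(x)/x = 1/\sin \phi(x)$, where $\phi(x) \in (0,\tfrac{\pi}{2})$ is the unique solution of \eqref{eq: iota definition identity}. Thus the monotonicity and limits of $f$ reduce to the corresponding properties of $x \mapsto \phi(x)$, while the explicit formula for $f^{-1}$ follows by solving \eqref{eq: iota definition identity} for $x$ in terms of $\sin\phi = 1/y$.

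The first step is to analyze the auxiliary function
\begin{equation*}
g(\phi) \coloneqq \frac{\sin\phi}{\cos\phi - (\tfrac{\pi}{2}-\phi)\sin\phi}, \qquad \phi \in \bigl[0,\tfrac{\pi}{2}\bigr),
\end{equation*}
which is exactly the left-hand side of \eqref{eq: iota definition identity}. Setting $D(\phi) \coloneqq \cos\phi - (\tfrac{\pi}{2}-\phi)\sin\phi$, a direct differentiation gives $D'(\phi) = -(\tfrac{\pi}{2}-\phi)\cos\phi$, so $D$ is strictly decreasing on $[0,\tfrac{\pi}{2})$ from $D(0)=1$ to $D(\tfrac{\pi}{2}^-)=0^+$; in particular $D>0$ there. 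Using this, the quotient rule yields after cancellation
\begin{equation*}
g'(\phi) = \frac{\cos^2\phi}{D(\phi)^2} > 0 \qquad \text{on } (0,\tfrac{\pi}{2}).
\end{equation*}
Combined with $g(0)=0$ and $g(\phi) \to \infty$ as $\phi \nearrow \tfrac{\pi}{2}$, this identifies $g$ as a smooth strictly increasing bijection $(0,\tfrac{\pi}{2}) \to (0,\infty)$. Hence $\phi(x) = g^{-1}(x/\pi)$ is strictly increasing with $\phi(x) \to 0$ as $x \to 0^+$ and $\phi(x) \to \tfrac{\pi}{2}$ as $x \to \infty$; composing with $1/\sin(\cdot)$ gives both the strict monotonicity of $f$ and the two limits $\lim_{x \to 0^+} f(x) = \infty$, $\lim_{x \to \infty} f(x) = 1$.

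For the inverse, fix $y \in (1,\infty)$ and set $\phi = \arcsin(1/y) \in (0,\tfrac{\pi}{2})$, so that $\sin\phi = 1/y$ and $\cos\phi = \sqrt{y^2-1}/y$. Substituting into \eqref{eq: iota definition identity} and multiplying numerator and denominator by $y$ yields
\begin{equation*}
x = \pi\, g(\phi) = \frac{\pi}{\sqrt{y^2-1} - \bigl(\tfrac{\pi}{2} - \arcsin(1/y)\bigr)},
\end{equation*}
which is the asserted formula for $f^{-1}(y)$. The only subtle point in the argument is the sign and monotonicity analysis of the denominator $D(\phi)$, which is needed to justify both that $g$ is well-defined on $[0,\tfrac{\pi}{2})$ and that $g'>0$ there; as indicated above, this is handled cleanly by computing $D'$. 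No serious technical obstacle is expected: the lemma is essentially a direct qualitative study of the transcendental relation \eqref{eq: iota definition identity}, followed by an algebraic inversion.
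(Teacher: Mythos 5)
Your proof is correct and takes essentially the same approach as the paper: both reduce the monotonicity of $f$ to the monotonicity of $\phi(x)$ via the implicit relation \eqref{eq: iota definition identity}, use $f(x) = 1/\sin\phi(x)$, and obtain the inverse by substituting $\phi = \arcsin(1/y)$. You simply supply details the paper leaves implicit, namely the explicit computation $g'(\phi) = \cos^2\phi/D(\phi)^2 > 0$ with $D>0$, and the derivation of the two limits from the boundary behaviour of $g$.
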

\begin{proof}
	Note that the left hand side of \eqref{eq: iota definition identity} is 
	strictly 
	increasing with respect to $\phi$, and the right hand side is strictly 
	increasing 
	with respect to $x$, so that $\phi$ is necessarily an increasing function 
	of $x$.
	In particular, we then have $f(x)=\frac{\iota(x)}{x}= \frac{1}{\sin \phi}$ which clearly implies the monotonicity of $f$.
	
    Next, we note that $y=f(x)=\frac{1}{\sin \phi}$ implies $\phi = \arcsin \frac{1}{y}$ and hence
    \begin{align*}
    	\frac{x}{\pi} & = \frac{ \frac{1}{y} }{\cos \left(\arcsin \frac{1}{y}\right) - (\frac{\pi}{2}-\arcsin \frac{1}{y})  \frac{1}{y}} .
    \end{align*}
    The identity $\cos(\arcsin(t)) = \sqrt{1-t^2}$ then gives
    $$
    \frac{x}{\pi} =  \frac{ \frac{1}{y} }{\sqrt{1-\frac{1}{y^2}} - (\frac{\pi}{2}-\arcsin \frac{1}{y})  \frac{1}{y}}
    =\frac{1}{ \sqrt{y^2-1} - \left(\frac{\pi}{2} - \arcsin \frac{1}{y}\right)}
    $$
    and thus the claim follows.
\end{proof}
In order to characterize the eigenvalues of $L_\alpha$ later on, we need more 
information on the order of convergence in Theorem~\ref{Theorem: 
Elbert-Laforgia Asymptotic Relation}. To this end, we first recall some 
ingredients of the proof of this result. By  the Watson  integral  formula  
\cite[p. 508]{Watson}, for fixed $k \in \N$ the function $\nu \mapsto 
j_{\nu,k}$ satisfies 
$$
\frac{d}{d\nu} j_{\nu,k} = 2 j_{\nu,k} \int_0^\infty K_0 (2  j_{\nu,k} 
\sinh(t)) e^{-2 \nu t} \, dt ,
$$
where $K_0$ denotes the modified Bessel function of the second kind of order zero. It then follows that the function 
$$
\iota_k(x) \coloneqq \frac{j_{kx,k}}{k}
$$
satisfies
\begin{equation} \label{eq: F_k definition}
	\frac{d}{dx} \iota_k(x)= 2 \iota_k \int_0^\infty K_0 \left( t 2 \iota_k  
	\frac{\sinh\left( \frac{t}{k} \right)}{\left(\frac{t}{k}\right)} \right) 
	e^{-2x t} \, dt \eqqcolon F_k(\iota_k,x) 
\end{equation}
for $k \in \N$ and $x \in (-1,\infty)$.
In \cite{Elbert-Laforgia} it is then shown that $\iota_k$ converges pointwise 
to the solution of 
\begin{equation} \label{eq: G definition}
	\left\{ 
	\begin{aligned} 
	\frac{d}{dx} \iota(x) &= 2 \iota \int_0^\infty K_0 \left( t 2 \iota  
	\right)  e^{-2x t} \, dt  \eqqcolon G(\iota,x) \\ 
	\iota(0) &=\pi ,
	\end{aligned}
    \right.
\end{equation}
which is precisely given by the function $\iota$ discussed in 
Theorem~\ref{Theorem: Elbert-Laforgia Asymptotic Relation}. Moreover, it is 
shown that
\begin{equation} \label{eq: iota - iota_k inequality}
	\iota_k(x) < \iota(x)
\end{equation}
holds for all $k \in \N$.

We now give a more precise characterization of this convergence in the case 
$x>0$.
\begin{lemma} \label{Lemma: iota bounds}
	For any $x>0$ and $\eps>0$ there exists $k_0 \in \N$ such that
	$$
	- \exp \left( \left(\frac{1}{3} + \eps \right) x  \right) \frac{\pi}{4 k} \leq  \frac{j_{x k,k}}{k} - \iota(x)  \leq - (1-\eps) \frac{\pi}{4 k} 
	$$
	holds for $k \geq k_0$.
\end{lemma}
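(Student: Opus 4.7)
My plan is to analyze the positive difference $D_k(x) := \iota(x) - \iota_k(x) > 0$ (positive by \eqref{eq: iota - iota_k inequality}) via the ODEs $\iota_k' = F_k(\iota_k, x)$ and $\iota' = G(\iota, x)$, and then apply a Gronwall-type estimate. Proposition~\ref{Prop: Properties of Bessel function zeros}(iii) immediately yields
\[
D_k(0) = \pi - j_{0,k}/k \in \Bigl(\pi/(4k) - O(1/k^2),\ \pi/(4k)\Bigr),
\]
so $(1-\eps)\pi/(4k) \leq D_k(0) \leq \pi/(4k)$ for $k$ large. Using the Laplace-transform identity $\int_0^\infty K_0(at) e^{-bt}\, dt = \arccos(b/a)/\sqrt{a^2 - b^2}$ (valid for $0 < b < a$), I would rewrite $G(\iota, x) = g(x/\iota)$ with $g(u) := \arccos(u)/\sqrt{1-u^2}$; a short calculation gives $g'(u) = [u\arccos u - \sqrt{1-u^2}]/(1-u^2)^{3/2}$, and since $q(u) := u\arccos u - \sqrt{1-u^2}$ satisfies $q(1) = 0$ and $q'(u) = \arccos u > 0$ on $(0,1)$, we have $q < 0$ there, so $g$ is strictly decreasing and $G(\cdot, x)$ is strictly increasing in $\iota$. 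The decomposition
\[
D_k'(x) = [G(\iota, x) - G(\iota_k, x)] + [G(\iota_k, x) - F_k(\iota_k, x)]
\]
then shows that $D_k$ is nondecreasing, since both brackets are nonnegative: the first by the monotonicity of $G(\cdot, x)$ together with $\iota > \iota_k$, and the second because $\sinh(t/k)/(t/k) \geq 1$ and $K_0$ is decreasing. Hence $D_k(x) \geq D_k(0) \geq (1-\eps)\pi/(4k)$, proving the upper bound on $d_k := -D_k$.

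For the lower bound I would linearize: the MVT gives $G(\iota, x) - G(\iota_k, x) = G_\iota(\tilde\iota(x), x)\, D_k(x)$ for some $\tilde\iota(x) \in (\iota_k(x), \iota(x))$, and direct differentiation yields $G_\iota = [-u g'(u)]/\iota$ with $u = x/\iota$. Substituting $u = \cos\theta$, one finds $|g'(u)| = (\sin\theta - \theta\cos\theta)/\sin^3\theta$, and the estimate $|g'(u)| \leq 1$ reduces to $\sin(2\theta) \leq 2\theta$, which is immediate. Since $\iota_k(s) \geq \iota_k(0) \to \pi$ as $k \to \infty$, this gives $\int_0^x G_\iota(\tilde\iota(s), s)\, ds \leq x/\pi + o(1) < (1/3 + \eps/2)\, x$ for $k$ large, with room to spare because $1/\pi < 1/3$. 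Meanwhile the residual $\rho_k(x) := G(\iota_k(x), x) - F_k(\iota_k(x), x) \geq 0$ satisfies $\rho_k(x) = O(1/k^2)$ for each fixed $x > 0$: applying the MVT to $K_0$ bounds the integrand by $K_1(2t\iota_k) \cdot 2t\iota_k\, (s_k(t) - 1)\, e^{-2xt}$, and combining $s_k(t) - 1 \leq (t/k)^2 \cosh(t/k)/2$ with the fact that $\cosh(t/k)\, e^{-2xt}$ is absorbed into $e^{-xt}$ for $k$ large relative to $1/x$ yields the required $k^{-2}$ scaling. Variation of parameters on $D_k' = G_\iota(\tilde\iota, x)\, D_k + \rho_k$ then gives
\[
D_k(x) \leq \bigl(D_k(0) + x\, \|\rho_k\|_{L^\infty(0,x)}\bigr)\, e^{\int_0^x G_\iota(\tilde\iota, s)\, ds} \leq \frac{\pi}{4k}\, e^{(1/3 + \eps) x}
\]
for $k$ sufficiently large, establishing the lower bound on $d_k$.

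The main technical obstacle is the $O(1/k^2)$ control on $\rho_k$, which requires some care because $\sinh(t/k)$ grows exponentially in $t$; this is handled by splitting the integral at a suitable scale (or absorbing $\cosh(t/k)$ into the decay $e^{-2xt}$, valid provided $k$ is large relative to $1/x$), and then using the explicit moment $\int_0^\infty y^3 K_1(y)\, dy = 3\pi/2$ on the leading part to obtain the asymptotic constant. The remaining estimates are essentially routine once this bound is in place.
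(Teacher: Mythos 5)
Your proposal is correct and works within the same basic framework as the paper --- comparing the ODEs for $\iota_k$ and $\iota$, using the closed form $G(y,x)=\arccos(x/y)/\sqrt{1-(x/y)^2}$, the monotonicity of $K_0$ together with $\sinh(t/k)\ge t/k$ for the upper bound, and Proposition~\ref{Prop: Properties of Bessel function zeros}(iii) for the initial data --- but your lower bound runs on a genuinely different mechanism. The paper writes $u_k=\iota_k-\iota$ and $u_k(x)=u_k(0)\exp\bigl(\int_0^x\beta_k\bigr)$ with $\beta_k=(F_k(\iota_k,\cdot)-G(\iota,\cdot))/(\iota_k-\iota)$, so it must control the additive error $F_k-G$ \emph{relative to} $|u_k|$; this forces it to feed the already-established bound $|u_k|\ge(\tfrac{\pi}{4}-\delta)/k$ back in, together with a splitting at $t_k=k^{1/3}$ giving $F_k-G\ge -Ck^{-4/3}$ and the bound $g'\le\tfrac13$ for $g(t)=\arccos(1/t)/\sqrt{1-1/t^2}$, to get $\beta_k\le\tfrac13+\eps$. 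You instead treat the error additively via variation of parameters, $D_k'=G_\iota(\tilde\iota,\cdot)D_k+\rho_k$, which decouples the two estimates: you bound $G_\iota\le 1/\pi+o(1)$ directly from $|g'(u)|\le 1$ (your reduction to $\sin 2\theta\le 2\theta$ is correct, and since $1/\pi<1/3$ you even obtain a slightly sharper exponent than the statement requires), and you only need $\rho_k=o(1/k)$, with no lower bound on $D_k$ needed at all. The one point to tighten is the residual: you claim $\rho_k(x)=O(k^{-2})$ for each fixed $x$ but then use $\|\rho_k\|_{L^\infty(0,x)}$, and the absorption $\cosh(t/k)e^{-2st}\le e^{-st}$ breaks down for $s<1/k$. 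This is easily repaired: for all $s\ge 0$ one may instead absorb $\cosh(t/k)\le e^{t}$ into the exponential decay of $K_1\bigl(2t\,\iota_k(s)\bigr)$, since $2\iota_k(s)\ge 2\iota_k(0)\to 2\pi>1$, or use the $k^{1/3}$-splitting you mention as an alternative; either yields the uniform $o(1/k)$ bound on $(0,x)$, after which your Duhamel estimate gives $D_k(x)\le\frac{\pi}{4k}e^{(1/3+\eps)x}$ for large $k$, as required.
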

\begin{proof}
	Recall that we set $\iota_k(x)=  \frac{j_{x k,k}}{k}$ and the functions satisfy
	\begin{align*}
	\frac{d}{dx} \iota_k & = F_k(\iota_k,x) \\
	\frac{d}{dx} \iota & = G(\iota,x)
	\end{align*}
	in $(-1,\infty)$ with $F_k$ and $G$ defined in \eqref{eq: F_k definition} and \eqref{eq: G definition}, respectively. Now consider $u_k(x)\coloneqq\iota_k(x)-\iota(x)$ so that
	$$
	\frac{d}{dx} u_k = \frac{F_k(\iota_k,x)-G(\iota,x)}{\iota_k(x)-\iota(x)} u_k(x) = \beta_k(x) u_k(x) 
	$$
	where we set 
	$$
	\beta_k(x)\coloneqq\frac{F_k(\iota_k,x)-G(\iota,x)}{\iota_k(x)-\iota(x)} .
	$$
	Note that $\beta_k$ is well-defined by \eqref{eq: iota - iota_k inequality}. 
	In particular, we find that
	$$
	u_k(x)= u_k(0) \exp \left( \int_0^x \beta_k(t) \, dt \right) .
	$$
	Next, we note that the monotonicity of $K_0$ and the fact that $\sinh(t)>t$ holds for $t>0$ imply
	\begin{equation*} 
	\begin{aligned} 
	F_k(\iota_k,x) &  = 2 \iota_k \int_0^\infty K_0 \left( t 2 \iota_k  
	\frac{\sinh\left( \frac{t}{k} \right)}{\left(\frac{t}{k}\right)} \right) 
	e^{-2x t} \, dt \\
	&< 2 \iota \int_0^\infty K_0 \left( t 2 \iota  \right)  
	e^{-2x t} \, dt =G(\iota_k,x) 
    \end{aligned}
	\end{equation*}
    where \cite[p. 388]{Watson} implies
    \begin{equation} \label{eq:G-identity}
     G(y,x) = 2 y \int_0^\infty K_0 \left( t 2 y  \right)  e^{-2x t} \, dt  = 
     \frac{\arccos \frac{x}{y}}{\sqrt{1-\left(\frac{x}{y}\right)^2}} \qquad 
     \text{if $ \left|\frac{x}{y}\right|<1$.}
    \end{equation}
    Importantly, the function 
	$$
	g:(1,\infty) \mapsto \R, \quad t \mapsto \frac{\arccos \frac{1}{t}}{\sqrt{1-\frac{1}{t^2}}} 
	$$
	is strictly increasing. Indeed, we have
	\begin{align*}
		g'(t) & = \frac{1}{t^2(1-\frac{1}{t^2})} - \frac{\arccos \frac{1}{t}}{t^3 (1-\frac{1}{t^2})^\frac{3}{2}}
		= \frac{1}{t^2 (1-\frac{1}{t^2})} \left( 1 - \frac{\arccos 
		\frac{1}{t}}{t \sqrt{1-\frac{1}{t^2}}} \right) \\
		& 	=\frac{1}{t^2 -1} \left( 1 - \frac{\arccos \frac{1}{t}}{ 
		\sqrt{t^2-1}} \right)
	\end{align*} 
    and \cite[Theorem 2 for $b={1}/{2}$]{Zhao-Wei-Guo-Qi} gives
    $$
    \arccos s < 2 \frac{\sqrt{1-s}}{\sqrt{1+s}}
    $$
    for $s \in (0,1)$ so that
    $$
    \frac{\arccos \frac{1}{t}}{ \sqrt{t^2-1}} = \frac{\arccos \frac{1}{t}}{t \sqrt{1-\frac{1}{t}} \sqrt{1+\frac{1}{t}}} <  \frac{2}{t \sqrt{1+\frac{1}{t}}} = \frac{2}{\sqrt{t^2+t}} <1
    $$
    holds for $t >1$, which implies that $g'$ is a positive function. Moreover, $g'$ can be continuously extended by $g'(1)=\frac{1}{3}$ and is decreasing, which implies 
    \begin{equation} \label{eq: g' bound}
    g'(t) \leq \frac{1}{3}
    \end{equation}
    for $t>1$.
    
    Noting that Lemma~\ref{Lemma: Monotonicity and Inverse for iota quotient} 
    and
    the convergence $\iota_k(x) \to \iota(x)$ imply that 
    $\left|\frac{x}{\iota_k(x)}\right|<1$ holds for sufficiently large $k$, we 
    may combine the identity~\eqref{eq:G-identity} with  $\iota_k(x) < 
    \iota(x)$ and the monotonicity properties stated above to deduce 
    $F_k(\iota_k,x) < G(\iota_k,x)< G(\iota,x)$ and hence 
	\begin{equation} \label{eq: beta estimates} 
	0 \leq \beta_k(x) .
    \end{equation}
    In particular, this yields
    $$
    u_k(x) \leq u_k(0)
    $$
    for $x>0$. 
    
	We now estimate $u_k(0)$. Recall that $\iota(0)=\pi$ and therefore
	$$
	u_k(0)=  \frac{j_{0,k}}{k} - \pi ,
	$$
	so Proposition~\ref{Prop: Properties of Bessel function zeros}(iii) yields
	\begin{equation}  \label{eq: u_k(0) estimates}
	- \frac{\pi}{4 k} \leq u_k(0) \leq - \frac{\pi}{4 k} + \frac{1}{8 \pi k (k-\frac{1}{4})} .
    \end{equation}
	In view of $u_k(x)= u_k(0) \exp \left( \int_0^x \beta_k(t) \, dt \right)$, 
	combining the last estimate with \eqref{eq: beta estimates} therefore 
	implies
	$$
	u_k(x) \leq - \frac{\pi}{4 k} + \frac{1}{8 \pi k (k-\frac{1}{4})}
	$$
	for $x>0$ and hence the upper bound stated in the claim.
	
	It remains to prove the lower bound. To this end, we employ arguments inspired by \cite{Bobkov}
	and first note that
	$$
	\sinh(x) \leq x+x^3
	$$
	holds for $x \in (0,1)$, which implies
	\begin{equation} \label{eq: sinh upper bound}
	\frac{\sin \left(\frac{t}{k}\right)}{\frac{t}{k}} \leq 1 + \frac{1}{k^\frac{4}{3}}
    \end{equation}
	for $k \in \N$ and $0 < t \leq t_k \coloneqq k^\frac{1}{3}$. In the following, we fix $x>0$ and let $y>x$. Then the monotonicity of $K_0$ and \eqref{eq: sinh upper bound} yield
	\begin{align*}
		F_k(y,x)& \geq \int_0^{t_k} K_0 \left(  t \left( 1 + 
		\frac{1}{k^\frac{4}{3}} \right)\right) e^{-\frac{x t}{y}} \, dt
	\end{align*}
    and therefore
    \begin{equation} \label{eq:difference-estimate-lower}
    \begin{aligned}
    	    	& F_k(y,x) - G(y,x) \\
         \geq & \int_0^{t_k} \left[ K_0 \left(  t \left( 1 + 
    	\frac{1}{k^\frac{4}{3}} \right)\right) - K_0(t) \right] e^{-\frac{x 
    	t}{y}} \, dt - \int_{t_k}^\infty K_0(t) e^{-\frac{x t}{y}} \, dt .
    \end{aligned}
    \end{equation}
    From
    $$                      
    K_0(t) \leq K_\frac{1}{2}(t)=\sqrt{\frac{\pi}{2t}} e^{-t}       
    $$
    we then find 
    \begin{align*}
    	\int_{t_k}^\infty K_0(t) e^{-\frac{x t}{y}} \, dt & \leq 
    	\sqrt{\frac{\pi}{2t_k}} e^{-t_k} \int_0^\infty e^{-\frac{x t}{y}} \, dt 
    	= \sqrt{\frac{\pi}{2t_k}} e^{-t_k} \frac{y}{x} .
    \end{align*}
    For any $y_0>x$ and $\delta \in (0,1)$, we thus find $k_0 \in \N$ such that
    \begin{equation} \label{eq: Estimate for second integral}
    \int_{t_k}^\infty K_0(t) e^{-\frac{x t}{y}} \, dt  \leq 
    \frac{\delta}{k^\frac{4}{3}}
    \end{equation}
    holds for $|y-y_0|<y_0-x$ and $k \geq k_0$.
    
    In order to estimate the other term in 
    \eqref{eq:difference-estimate-lower}, we note that for $t \in \R$ there 
    exists $\xi_k \in \left(t,t+\frac{t}{k^\frac{4}{3}}\right)$ such that
    $$
    K_0 \left(  t \left( 1 + \frac{1}{k^\frac{4}{3}} \right)\right) - K_0(t) = K_0'(\xi_k) \frac{1}{k^\frac{4}{3}} = - K_1(\xi_k) \frac{t}{k^\frac{4}{3}} \geq -K_1(t) \frac{t}{k^\frac{4}{3}} .
    $$
    This implies
    \begin{align*}
    	\int_0^{t_k} \left[ K_0 \left(  t \left( 1 + \frac{1}{k} \right)\right) 
    	- K_0(t) \right] e^{-\frac{x t}{y}} \, dt 
    	& \geq  - \frac{1}{k^\frac{4}{3}} \int_0^{t_k} K_1(t) t 
    	e^{-\frac{xt}{y}} \, dt \\
    	& \geq - \frac{1}{k^\frac{4}{3}} \int_0^{\infty} 
    	K_1(t) t e^{-\frac{xt}{y}} \, dt ,
    \end{align*} 
    where \cite[p. 388]{Watson} gives
    $$
     \int_0^{\infty} K_1(t) t e^{-\frac{xt}{y}} \, dt \leq  \int_0^{\infty} 
     K_1(t) t  \, dt = 
     \Gamma\left(\frac{1}{2}\right)\Gamma\left(\frac{3}{2}\right) = 
     \frac{\pi}{2} .
    $$
    Combined with \eqref{eq: Estimate for second integral}, it thus follows 
    that for any $x>0$, $y_0>x$ and $\delta \in (0,1)$ there exists $k_0 \in 
    \N$ such that
    \begin{align*}
    	F_k(y,x) - G(y,x) & \geq - \left( \frac{\pi}{2} + \delta \right) \frac{1}{k^\frac{4}{3}}
    \end{align*}
    holds for $k \geq k_0$ and $|y-y_0|<y_0-x$. 
    
    We now proceed by taking $y_0 = \iota(x)$ and note that there exists $k_0' 
    \in \N$ such that $|\iota_k(x)-\iota(x)|<\iota(x)-x$ holds for $k \geq 
    k_0'$. Combined with \eqref{eq: g' bound}, we then conclude that for given 
    $\delta \in (0,1)$ we can find $k_0 \in \N$ such that
    \begin{align*}
    F_k(\iota_k,x)-G(\iota,x) & = F_k(\iota_k,x) - G(\iota_k,x) + G(\iota_k,x) - G(\iota,x) \\
    & \geq    - \left( \frac{\pi}{2} + \delta \right) \frac{1}{k^\frac{4}{3}}  - \max_{\xi \in (\iota_k(x),\iota(x))} \frac{dG}{dy}(\xi,x) |\iota_k(x) - \iota(x)| \\
    & \geq   - \left( \frac{\pi}{2} + \delta \right) \frac{1}{k^\frac{4}{3}}   
    - \frac{1}{3} |\iota_k(x) - \iota(x)| 
 	\end{align*}
    holds for $k \geq k_0$.
    It follows that
    \begin{align*}	
    \beta_k(x) & = \frac{F_k(\iota_k,x)-G(\iota,x)}{\iota_k(x)-\iota(x)} 
    \leq \frac{1}{3} +   \left( \frac{\pi}{2} + \delta \right) \frac{1}{k^\frac{4}{3}} \frac{1}{|\iota_k(x) - \iota(x)|} 
    \end{align*}
    and since $|\iota_k(x) - \iota(x)| = |u_k(x)| \geq (\frac{\pi}{4} - \delta) 
    \frac{1}{k}$ holds for sufficiently large $k$ we therefore have
    \begin{align*}
    	\beta_k(x) & \leq \frac{1}{3} +  \frac{\frac{\pi}{2} + \delta}{\frac{\pi}{4}-\delta} \frac{1}{k^\frac{1}{3}} .
    \end{align*} 
    Consequently, we may choose $k_0$ such that 
    $$
    \beta_k(x)  \leq \frac{1}{3} + \eps 
    $$
    holds for $k \geq k_0$. Overall, this yields
    $$
    1 \leq \exp \left( \int_0^x \beta_k(t) \, dt \right) \leq \exp \left( 
    \left(\frac{1}{3} + \eps \right) x  \right)
    $$
    for $k \geq k_0$. Recalling \eqref{eq: u_k(0) estimates} and
    $$
    u_k(x)= u_k(0) \exp \left( \int_0^x \beta_k(t) \, dt \right), 
    $$
    the claim thus follows.
\end{proof}
\begin{remark} \label{Remark: iota bounds estimate}
	    Lemma~\ref{Lemma: iota bounds} improves the bound obtained in 
	    \cite[Theorem 2.1]{Elbert-Laforgia} as follows:
	    
	    For any $\eps,\nu >0$ there exists $k_0 \in \N$ such that 
	    $$
	    j_{\nu,k} < k \, \iota \left( \frac{\nu}{k} \right) - (1-\eps) 
	    \frac{\pi}{4} 
	    $$
	    holds for $k \geq k_0$.
\end{remark}
%
%
%
%
%
%
%
%
%
\section{Spectral Characterization} 
\label{Section: Spectral Characterization}
For $\alpha>1$ recall the operator 
$$
L_\alpha=-\Delta + \alpha^2 \del_{\theta}^2.
$$
If $\phi \in H^1_0(\B)$ is an eigenfunction of $-\Delta$ corresponding to the 
eigenvalue $j_{\ell,k}^2$, then it follows from the representation 
\eqref{eq:eigenfunction-representation} that $\phi$ is also an eigenfunction of 
$L_\alpha$ with
$$
L_\alpha \phi = (j_{\ell,k}^2-\alpha^2 \ell^2) \phi .
$$
Since the eigenfunctions of $-\Delta$ constitute an orthonormal basis of $L^2(\B)$, we find that the Dirichlet eigenvalues of $L_\alpha$ are given by 
$$
\left\{ j_{\ell,k}^2-\alpha^2 \ell^2 : \ \ell \in \N_0, \, k \in \N \right\}.
$$
In the following, we wish to study this set in more detail. The following 
result already shows a stark contrast to the case $\alpha \in [0,1]$.
\begin{proposition} 
	Let $\alpha>1$. Then the spectrum of the operator $L_\alpha=-\Delta + 
	\alpha^2 \del_\theta^2$ is unbounded from above and below.
\end{proposition}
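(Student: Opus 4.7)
The plan is to handle the two unboundedness claims separately, using the explicit description of the Dirichlet eigenvalues as $j_{\ell,k}^2 - \alpha^2 \ell^2$ combined with the asymptotic results already collected in the paper.

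For unboundedness from above, I would simply set $\ell = 0$. Then the corresponding eigenvalues are $j_{0,k}^2$ for $k \in \N$, and Proposition~\ref{Prop: Properties of Bessel function zeros}(iii) (or merely the fact that $j_{0,k} \to \infty$) gives $j_{0,k}^2 \to +\infty$.

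For unboundedness from below, the guiding identity is
$$
j_{\ell,k}^2 - \alpha^2 \ell^2 = (j_{\ell,k} - \alpha \ell)(j_{\ell,k} + \alpha \ell),
$$
so I need to exhibit a sequence $(\ell_k, k)$ with $\ell_k \to \infty$ along which the ratio $j_{\ell_k,k}/\ell_k$ stays bounded above by some $\beta < \alpha$; then the eigenvalue is at most $-(\alpha^2 - \beta^2)\ell_k^2 \to -\infty$. To produce such a ratio I would invoke Lemma~\ref{Lemma: Monotonicity and Inverse for iota quotient}: the function $f(x) = \iota(x)/x$ is a strictly decreasing bijection from $(0,\infty)$ onto $(1,\infty)$, with $f(x) \to 1$ as $x \to \infty$. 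Since $\alpha > 1$, I may fix $x_0 > f^{-1}(\alpha)$, so that $\beta \coloneqq f(x_0) = \iota(x_0)/x_0 < \alpha$.

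With this $x_0$ in hand, Theorem~\ref{Theorem: Elbert-Laforgia Asymptotic Relation} yields $j_{x_0 k, k}/k \to \iota(x_0)$, and hence $j_{x_0 k, k}/(x_0 k) \to \beta$. The only minor nuisance is that $x_0 k$ is generally not an integer; I would set $\ell_k \coloneqq \lfloor x_0 k \rfloor \in \N$ and use Proposition~\ref{Prop: Properties of Bessel function zeros}(iv), which gives $|j_{\ell_k, k} - j_{x_0 k, k}| < \pi/2$, together with $\ell_k/k \to x_0$, to deduce that $j_{\ell_k, k}/\ell_k \to \beta$ as well. Consequently, for all sufficiently large $k$ the ratio $j_{\ell_k, k}/\ell_k$ lies below $(\alpha + \beta)/2 < \alpha$, and the factored identity above forces $j_{\ell_k, k}^2 - \alpha^2 \ell_k^2 \to -\infty$, which proves the claim.

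There is no serious obstacle here; the only point worth attention is ensuring that such an $x_0$ exists, which is exactly what the limit $\lim_{x \to \infty} f(x) = 1$ in Lemma~\ref{Lemma: Monotonicity and Inverse for iota quotient} guarantees for every $\alpha > 1$. This is also the only place where the hypothesis $\alpha > 1$ is really used, reflecting the well-known fact that for $\alpha \leq 1$ the operator $L_\alpha$ is (degenerate) elliptic and its spectrum is bounded below.
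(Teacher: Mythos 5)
Your proof is correct, but it follows a genuinely different route from the paper's. The paper proves unboundedness from below directly from the elementary two-sided bound of Corollary~\ref{Corollary: Combined bounds} (derived from Qu--Wong and Breen): writing $j_{\ell,k}^2-\alpha^2\ell^2=(j_{\ell,k}+\alpha\ell)\,\ell\,(j_{\ell,k}/\ell-\alpha)$, one sees from that corollary that $j_{\ell,k}/\ell-\alpha \to 1-\alpha<0$ whenever $\ell_i/k_i\to\infty$ (e.g.\ $k$ fixed, $\ell\to\infty$), and $j_{\ell,k}/\ell-\alpha\to+\infty$ whenever $\ell_i/k_i\to 0$. Thus the paper's argument is self-contained within Section~2 and uses only explicit inequalities, with no need to invoke the Elbert--Laforgia limit $\iota$. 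Your argument instead reaches for the heavier Theorem~\ref{Theorem: Elbert-Laforgia Asymptotic Relation} and Lemma~\ref{Lemma: Monotonicity and Inverse for iota quotient}, picking $x_0$ with $\iota(x_0)/x_0<\alpha$ and passing to $\ell_k=\lfloor x_0 k\rfloor$ via Proposition~\ref{Prop: Properties of Bessel function zeros}(iv). This works and is correct, but it brings in machinery (the transcendental equation defining $\iota$, Watson's integral formula behind it) that the proposition does not need; what it buys is a tight understanding of the threshold ratio $\ell/k$, which is exactly the information the paper postpones to Theorem~\ref{Theorem: Spectrum has no accumulation point}, where it is genuinely required. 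Your observation for the upper bound (take $\ell=0$ and use $j_{0,k}\to\infty$) is slightly cleaner than the paper's parametrization by $\ell_i/k_i\to 0$, and the remark about $\alpha>1$ being where the hyperbolicity enters is apt.
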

\begin{proof}
	For $\ell \in \N_0$, $k \in \N$ we write
	$$
	j_{\ell,k}^2-\alpha^2 \ell^2=(j_{\ell,k}+\alpha \ell) \ell \left(\frac{j_{\ell,k}}{\ell}-\alpha\right)
	$$ 
	and note that Corollary~\ref{Corollary: Combined bounds} implies
	\begin{equation} \label{Upper and lower bound}
		1-\alpha+\frac{\left( \frac{3\pi}{8} (4k-2) \right)^\frac{2}{3}}{2^\frac{1}{3}} \ell^{-\frac{2}{3}} <
		\frac{j_{\ell,k}}{\ell}-\alpha 
		< 1-\alpha + \frac{\left( \frac{3\pi}{2} k\right)^\frac{2}{3}}{2^\frac{1}{3}} \ell^{-\frac{2}{3}} + \frac{3}{20} \left( \frac{3\pi}{2} k \right)^\frac{4}{3} \frac{2^\frac{1}{3}}{\ell^\frac{4}{3}} .
	\end{equation}
    If we choose sequences $(\ell_i)_i$, $(k_i)_i$, such that 
    $\frac{\ell_i}{k_i} \to \infty$, this readily implies 
    $j_{\ell_i,k_i}^2-\alpha^2 \ell_i^2 \to -\infty$, whereas sequences such 
    that $\frac{\ell_i}{k_i} \to 0$ yield $j_{\ell_i,k_i}^2-\alpha^2 \ell_i^2 
    \to 
    \infty$ and thus the claim.
\end{proof}
In particular, this proves the first part of Theorem~\ref{Theorem: Spectrum 
Characterization - Introduction}.
As noted in the introduction, it is not clear whether the spectrum of 
$L_\alpha$ only consists of isolated points. Indeed, note that 
Lemma~\ref{Lemma: iota bounds} suggests that certain subsequences of 
$j_{\ell,k} - \alpha \ell$ may converge and it is unclear if there exists a 
subsequence that even converges to zero. In particular, the spectrum of the 
operator could even be dense in $\R$.

This is excluded by the second part of Theorem~\ref{Theorem: Spectrum 
Characterization - Introduction} which we restate as follows.
\begin{theorem}  \label{Theorem: Spectrum has no accumulation point}
	 There exists a sequence $(\alpha_n)_n  \subset (1,\infty)$ 
	 such that the following properties hold for $n \in \N$:
	 \begin{itemize}
	 	\item[(i)] The spectrum of $L_{\alpha_n}$ consists of eigenvalues with 
	 	finite multiplicity.
	 	\item[(ii)] There exists $c_n>0$ 
	 	such that for each 
	 	$\ell \in \N_0$, $k \in \N$ we either have $j_{\ell,k}^2-\alpha_n^2 
	 	\ell^2=0$ or
	 	\begin{equation} \label{eq: eigenvalue lower bound-restated} 
	 	|j_{\ell,k}^2-\alpha_n^2 \ell^2| \geq c_n j_{\ell,k}.
	 	\end{equation} 
	 	\item[(iii)] The spectrum of $L_{\alpha_n}$ has no finite 
	 	accumulation points.
	 \end{itemize}
\end{theorem}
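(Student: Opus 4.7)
The plan is to choose $\alpha_n = f(\sigma_n)$ with $\sigma_n = 1/q_n$ for a strictly increasing sequence of positive integers $q_n \to \infty$. By Lemma~\ref{Lemma: Monotonicity and Inverse for iota quotient} the resulting $\alpha_n$ are then well defined, strictly increasing, unbounded, and the explicit inverse formula yields the asymptotics $\alpha_n \sim \pi q_n$; in particular $\alpha_n > \pi/2$ for all sufficiently large $n$. Using the factorization $j_{\ell,k}^2 - \alpha_n^2\ell^2 = (j_{\ell,k}+\alpha_n \ell)(j_{\ell,k}-\alpha_n\ell)$ together with $j_{\ell,k} \geq \ell$ (a consequence of $j_{0,k}>0$ and Proposition~\ref{Prop: Properties of Bessel function zeros}(iv)), estimate (ii) reduces to proving a uniform lower bound $|j_{\ell,k} - \alpha_n \ell| \geq c_n'$ over all $(\ell,k) \in \N_0 \times \N$ with $j_{\ell,k} \neq \alpha_n \ell$, for some $c_n'>0$. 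Once this is in hand, parts (i) and (iii) follow quickly: for any $\lambda\neq 0$ the estimate $|\lambda| \geq c_n j_{\ell,k}$ constrains $j_{\ell,k}$, and hence $(\ell,k)$, to a finite set, ruling out both infinite multiplicity and any finite accumulation point; for $\lambda = 0$ the set $\{(\ell,k) : j_{\ell,k} = \alpha_n \ell\}$ is finite by the same argument.

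To establish the lower bound on $|j_{\ell,k} - \alpha_n \ell|$, I split into cases according to the integer $r := q_n\ell - k$. If $r = 0$, then $(\ell,k) = (m, q_n m)$ lies on the critical line $\ell/k = \sigma_n$, and applying Lemma~\ref{Lemma: iota bounds} with $x = \sigma_n$ together with the identity $k\iota(\sigma_n) = \alpha_n \ell$ yields
$$|j_{\ell,k} - \alpha_n \ell| = k\,|u_k(\sigma_n)| \in \bigl[(1-\eps)\pi/4,\ \exp((\tfrac{1}{3}+\eps)\sigma_n)\pi/4\bigr]$$
for all $k$ beyond some $k_0(n,\eps)$. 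If $r \neq 0$, then $\nu^* := \sigma_n k = k/q_n$ satisfies $|\ell - \nu^*| \geq 1/q_n$, and I decompose
$$j_{\ell,k} - \alpha_n\ell = (j_{\nu^*,k} - \alpha_n \nu^*) + \int_{\nu^*}^{\ell} \left( \frac{d j_{\nu,k}}{d\nu} - \alpha_n \right) d\nu,$$
where the first term is controlled by Lemma~\ref{Lemma: iota bounds} exactly as before and the integrand lies in $(1-\alpha_n,\,\pi/2-\alpha_n)$ by Proposition~\ref{Prop: Properties of Bessel function zeros}(iv), hence is strictly negative since $\alpha_n > \pi/2$. For $\ell > \nu^*$ the integral is negative, pushing $j_{\ell,k} - \alpha_n\ell$ even further below the on-line value $-(1-\eps)\pi/4$; for $\ell < \nu^*$ the integral is positive with magnitude at least $(\alpha_n - \pi/2)/q_n$.

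The subcase $r < 0$ (equivalently $\ell < \nu^*$) is the main technical obstacle: I must verify that the positive contribution $(\alpha_n - \pi/2)/q_n$ strictly exceeds the error $\exp((\tfrac{1}{3}+\eps)\sigma_n)\pi/4$ from Lemma~\ref{Lemma: iota bounds} for $q_n$ large enough, so that $j_{\ell,k} - \alpha_n \ell$ becomes positive and bounded away from $0$. The asymptotics $\alpha_n \sim \pi q_n$ give $(\alpha_n - \pi/2)/q_n \to \pi$ while the error tends to $\pi/4$, so the inequality indeed holds for $q_n$ beyond an explicit threshold; this is precisely the step in which the sharp $\pi/4$-constant of Lemma~\ref{Lemma: iota bounds} (rather than the mere $o(1)$ precision of the original Elbert--Laforgia asymptotics) is indispensable, and it motivates the restriction to rationals of the special form $\sigma_n = 1/q_n$. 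The finitely many small $k$ not covered by the Lemma contribute only an additional finite minimum to $c_n'$, since for each such $k$ one has $|j_{\ell,k} - \alpha_n \ell| \to \infty$ as $\ell \to \infty$, reducing the problem to finitely many values. With this uniform bound assembled, (ii) is proved and (i), (iii) follow.
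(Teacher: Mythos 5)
Your proposal is correct and takes essentially the same route as the paper: both rest on the choice $\alpha_n=f(1/q_n)$ with $q_n$ integral (so that $|\ell-\sigma_n k|\geq 1/q_n$ off the critical line $\ell=\sigma_n k$), Lemma~\ref{Lemma: iota bounds} to control the on-line values, the derivative bound of Proposition~\ref{Prop: Properties of Bessel function zeros}(iv) off the line, and the same crucial numerology $\alpha_n/q_n\to\pi>\pi/4$ made available by the sharp $\pi/4$-constant of Lemma~\ref{Lemma: iota bounds}. The only structural differences are that the paper argues by contradiction along a converging subsequence and invokes the monotonicity of $\nu\mapsto j_{\nu,k}/\nu$ from Proposition~\ref{Prop: Properties of Bessel function zeros}(ii) when $\ell\geq\sigma_n k$, whereas you derive a direct uniform lower bound on $|j_{\ell,k}-\alpha_n\ell|$ and use the integral/derivative decomposition for both off-line signs of $r$ --- a mild and arguably cleaner reorganization that yields the same estimates.
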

\begin{proof}
	We set $\sigma_n := \frac{1}{n}$ and $\alpha_n := 
	\frac{\iota(\sigma_n)}{\sigma_n}$, where the function $\iota$ is given by 
	Theorem~\ref{Theorem: Elbert-Laforgia Asymptotic Relation}. It then 
	suffices to show that there exists $n_0 \in \N$ such that properties 
	(i)-(iii) hold for $n \geq n_0$. In the following, we fix $n \in \N$ and  
	assume that there 
	exists $\Lambda \in \R$ and increasing sequences $(\ell_i)_i$, $(k_i)_i$ 
	such that $j_{\ell_i,k_i}^2-\alpha_n^2 \ell_i^2 \to \Lambda$ as $n \to 
	\infty$. 
	Note that the case of an eigenvalue with infinite multiplicity, i.e.,  
	$j_{\ell_i,k_i}^2-\alpha_n^2 \ell_i^2 = \Lambda$ for all $i$, 
	is included here. 	
	The identity $j_{\ell,k}^2-\alpha_n^2 \ell^2=(j_{\ell,k}+\alpha_n \ell) 
	\ell \left(\frac{j_{\ell,k}}{\ell}-\alpha_n\right)$ then implies that we 
	must have
	\begin{equation} \label{eq:quotient-convergence}
	\frac{j_{\ell_i,k_i}}{\ell_i} \to \alpha_n .
	\end{equation}
	Our goal is to show that such sequences can only converge of order 
	$\frac{1}{l_i}$, which will allow us to derive a suitable contradiction.
	
	Firstly, the estimate \eqref{Upper and lower bound} implies that there must 
	exist $\sigma \in (0,\infty)$ such that $\frac{\ell_i}{k_i} \to \sigma$. 	
	We now claim that for any $\eps>0$, there exists $i_0 \in \N$ such that
	\begin{equation}  \label{eq: eps-inequality for quotient}
		(1-\eps) \frac{j_{\sigma k_i,k_i}}{\sigma k_i} < 	
		\frac{j_{\ell_i,k_i}}{\ell_i} < (1+\eps) \frac{j_{\sigma 
		k_i,k_i}}{\sigma k_i} \qquad \text{for $i \geq i_0$.}
	\end{equation}
	To this end, we first assume that
	$\ell_i < \sigma k_i$ holds. Then		
	Proposition~\ref{Prop: Properties of Bessel function zeros}(ii) implies
	$$
	\frac{j_{\ell_i,k_i}}{\ell_i}   >  \frac{j_{\sigma k_i,k_i}}{\sigma k_i}     
	$$
	and, in particular, the lower bound. Moreover, the fact that the function 
	$\nu \mapsto j_{\nu,k}$ is increasing for fixed $k$ yields
	$$
	\frac{j_{\ell_i,k_i}}{\ell_i} \leq 	\frac{j_{\sigma k_i,k_i}}{\ell_i} = \frac{\sigma k_i}{\ell_i} \frac{j_{\sigma k_i,k_i}}{\sigma k_i} . 
	$$
	Noting that $\frac{\sigma k_i}{\ell_i} \to 1$ as $i \to \infty$, we 
	conclude that for any $\eps>0$, there exists $i_0 \in \N$ such that
	$$
	\frac{j_{\ell_i,k_i}}{\ell_i} < (1+\eps) \frac{j_{\sigma k_i,k_i}}{\sigma k_i}
	$$
	holds for $i \geq i_0$ with $\ell_i < \sigma k_i$. The case $\ell_i 
	\geq \sigma k_i$ can be treated analogously. 
	
	Overall, \eqref{eq: eps-inequality for quotient} implies
	$$
	(1-\eps)\frac{\iota(\sigma)}{\sigma} \leq 	\liminf_{n \to \infty} 
	\frac{j_{\ell_i,k_i}}{\ell_i} \leq  	\limsup_{n \to \infty} 
	\frac{j_{\ell_i,k_i}}{\ell_i} \leq (1+\eps)  \frac{\iota(\sigma)}{\sigma} 
	$$
	for arbitrary $\eps>0$, with the function $\iota$ given by 
	Theorem~\ref{Theorem: Elbert-Laforgia 
	Asymptotic Relation}. In particular, \eqref{eq:quotient-convergence} then 
	yields
	$$
	\frac{\iota(\sigma)}{\sigma} = 	\lim_{n \to \infty} 
	\frac{j_{\ell_i,k_i}}{\ell_i} 
	= \alpha_n
	$$ 
	and Lemma~\ref{Lemma: Monotonicity and Inverse for iota quotient} thus 
	implies that we must have $\sigma = \sigma_n$ due to our choice of 
	$\alpha_n$. In particular, it follows that $\frac{\ell_i}{k_i} \to 
	\sigma_n$. 
	We now distinguish two cases: 
	
	\textbf{Case 1:} There exists $i_0 \in \N$ such that $\frac{\ell_i}{k_i} 
	\geq \sigma_n$ holds for $i \geq i_0$. \\
	In this case, Proposition~\ref{Prop: Properties of Bessel function zeros}(ii) implies
	$$
	\frac{j_{\ell_i,k_i}}{\ell_i}-\alpha_n \leq \frac{j_{\sigma_n 
	k_i,k_i}}{\sigma_n k_i}-\alpha_n = \frac{1}{\sigma_n} 
	\left(\frac{j_{\sigma_n k_i,k_i}}{k_i}- \iota(\sigma_n) \right) ,
	$$ 
	so that Lemma~\ref{Lemma: iota bounds} yields 
	$$
	\frac{j_{\ell_i,k_i}}{\ell_i}-\alpha_n \leq - \frac{\pi}{8\sigma_n k_i} 
	$$
	for $i \geq i_0$, after possibly enlarging $i_0$. In particular, this 
	implies 
	$$
	|j_{\ell_i,k_i} - \alpha_n \ell_i| = \ell_i \left| 
	\frac{j_{\ell_i,k_i}}{\ell_i}-\alpha_n \right| \geq \frac{\pi 
	\ell_i}{8\sigma_n 
	k_i} \geq \frac{\pi}{8}
	$$
	for $i \geq i_0$ and therefore $\liminf_{i \to \infty} 
	(j_{\ell_i,k_i}-\alpha_n \ell_i) \geq \frac{\pi}{8}$.
    
    \medskip
    \textbf{Case 2:} There exists $i_0 \in \N$ such that $\frac{\ell_i}{k_i} < 
    \sigma_n$ holds for $i \geq i_0$. \\
    We may write $\ell_i=\sigma_n k_i - \delta_i$ with $\delta_i>0$ satisfying 
    $\frac{\delta_i}{k_i} \to 0$ as $i \to \infty$. Then
    \begin{align*}
    	j_{\ell_i,k_i}-\alpha_n \ell_i & =
    	j_{(\sigma_n k_i - \delta_i),k_i} - \alpha_n (\sigma_n k_i-\delta_i)  \\
    	&= (j_{\sigma_n k_i,k_i} -\alpha_n \sigma_n k_i) + (j_{(\sigma_n k_i - 
    	\delta_i),k_i} - j_{\sigma_n k_i ,k_i}) + \alpha_n \delta_i \\
    	& =    (j_{\sigma_n k_i,k_i} -\alpha_n \sigma_n k_i)  + R_{n,i} 
    	\delta_i ,	
    \end{align*}
    where we have set
    $$
    R_{n,i} \coloneqq \alpha_n - \frac{j_{\sigma_n k_i ,k_i}-j_{\sigma_n k_i - 
    \delta_i ,k_i}}{\delta_i} .
    $$
    By Lemma~\ref{Lemma: iota bounds} we may further enlarge $i_0$ to ensure 
    that
    $$
    j_{\sigma_n k_i,k_i} -\alpha_n \sigma_n k_i \geq -
    \frac{\pi}{4} e^{\sigma_n/3}
    $$
	holds for $i \geq i_0$.
    Next, Proposition~\ref{Prop: Properties of Bessel function zeros}(iv) gives
    $$
    \frac{ j_{\sigma_n k_i ,k_i} - j_{\sigma_n k_i-\delta_i ,k_i}}{\delta_i} 
    \in \left(1, \frac{\pi}{2}\right) 
    $$
    and hence
     \begin{align*} 
    	\liminf_{i \to \infty} R_{n,i} & \geq \alpha_n - \frac{\pi}{2}.
    \end{align*}
	Since $\alpha_n = \frac{\iota(\sigma_n)}{\sigma_n} \to \infty$ as $n \to 
	\infty$ by Lemma~\ref{Lemma: Monotonicity and Inverse for iota quotient}, 
	this term is positive for sufficiently large $n$, and it therefore 
	follows that
    \begin{align*}
    	\liminf_{i \to \infty} (j_{\ell_i,k_i}-\alpha_n \ell_i) & \geq 
    	-\frac{\pi}{4} e^{\sigma_n/3} 
    	+ 
    	\left(\alpha_n - \frac{\pi}{2} \right) \inf_{i \in \N} \delta_i .
    \end{align*}
    In order to show that the right hand side is positive, we recall that 
    $\sigma_n = \frac{1}{n}$ and therefore the fact that $\ell_i=\sigma_n k_i - 
    \delta_i$ must be a natural number implies $\delta_i = \frac{n'}{n}$ 
    for some $n' \in 
    \N$ and, in particular, $ \inf_i \delta_i = \frac{1}{n}$. 
    Moreover, by 
    Lemma~\ref{Lemma: Monotonicity and Inverse for iota quotient} the 
    associated $\alpha_n = \frac{\iota(\sigma_n)}{\sigma_n}$ is uniquely 
    determined by the equation
    $$
    \pi n  = \frac{\pi}{\sigma_n}= \sqrt{\alpha_n^2- 1} - \left(\frac{\pi}{2} - 
    \arcsin \frac{1}{\alpha_n}\right) .
    $$
    Since the right hand side is strictly increasing in $\alpha_n$ and we have
    \begin{align*} 
    \frac{\sqrt{n^2- 1} - \left(\frac{\pi}{2} - \arcsin \frac{1}{n}\right)}{n} & = \sqrt{1-\frac{1}{n^2}} + \frac{1}{n} \arcsin \frac{1}{n} - \frac{\pi}{2n} \to 1 < \pi
    \end{align*}
    as $n \to \infty$, there must exist $n_0 \in \N$ such that $\alpha_n > n$ 
    holds for $n \geq n_0$.
    We thus have 
    \begin{align*} 
    	-\frac{\pi}{4} e^{\sigma_n/3} + \left(\alpha_n - 
    	\frac{\pi}{2} \right) \inf_i \delta_i 
    	& \geq -\frac{\pi}{4} e^{\sigma_n/3} + \frac{1}{n} \left(n - 
    	\frac{\pi}{2} 
    	\right) \\
    	& = 1 - \pi \left( \frac{1}{2n} + \frac{ e^\frac{1}{3n}}{4} \right) \to 
    	1 - \frac{\pi}{4}> 0
    \end{align*}
    as $n \to \infty$. We conclude that after possibly further enlarging $n_0$, 
    $$
    \kappa_n:=-\frac{\pi}{4} e^{\sigma_n/3} + \left(\alpha_n - \frac{\pi}{2} 
    \right) 
    \inf_i \delta_i >0
    $$
    holds for $n \geq n_0$.
    
    \medskip
    Since we may always pass to a subsequence such that one of these two 
    cases holds, we overall find that
    $$
    \liminf_{i \to \infty} (j_{\ell_i,k_i}-\alpha_n \ell_i) \geq 
    \min\left\{\kappa_n, \frac{\pi}{4}\right\}>0
    $$
    for any sequences $(\ell_i)_i,(k_i)_i$ such that $\frac{\ell_i}{k_i} 
    \to \sigma_n = \frac{1}{n}$, provided $n \geq n_0$. In particular, it 
    follows that $j_{\ell,k}^2-\alpha_n^2 \ell^2 = (j_{\ell,k}-\alpha_n \ell)(
    j_{\ell,k}+\alpha_n \ell)$ cannot converge to $\Lambda$, which implies (i) 
    and (iii). 
    
    Moreover, since we considered arbitrary sequences satisfying 
    \eqref{eq:quotient-convergence}, we further find that
    $$
    \gamma_n\coloneqq\lim_{N \to \infty} \inf_{\ell,k \geq N} |j_{\ell,k}-\alpha_n \ell| > 0
    $$
    for $n \geq n_0$. Consequently, taking $N_0 \in \N$ such that $\inf_{\ell,k 
    \geq N} 
    |j_{\ell,k}-\alpha_n \ell| > \frac{\gamma_n}{2}$ holds for $N \geq N_0$ and 
    setting 
    $$
    c_n \coloneqq \min \left\{ \frac{\gamma_n}{2} ,  \ \inf_{\substack{\ell,k \leq N_0 \\ j_{\ell,k} \neq \alpha_n \ell}} |j_{\ell,k}-\alpha_n \ell| \right\} >0 
    $$
    yields 
    $$
    |j_{\ell,k}^2-\alpha_n^2 \ell^2| = |j_{\ell,k}-\alpha_n \ell| 
    |j_{\ell,k}+\alpha_n \ell| \geq c_n j_{\ell,k}
    $$
    as claimed in (ii). This completes the proof.
\end{proof}
\begin{remark}
	 \begin{itemize} 
        \item[(i)]
        The sequence $(\alpha_n)_n$ can be characterized further by noting that
        $$
        \pi n  = \sqrt{\alpha_n^2- 1} - \left(\frac{\pi}{2} - \arcsin 
        \frac{1}{\alpha_n}\right) 
        $$
        implies
        $$
        \alpha_n^2 = 1 + \left( \pi n + \frac{\pi}{2} - \arcsin 
        \frac{1}{\alpha_n} \right)^2 .
        $$
        Since $\arcsin\frac{1}{\alpha_n} = O(n^{-2})$, this implies 
        $$
        \alpha_n^2 \approx 1 + \left( \pi n + \frac{\pi}{2}  \right)^2
        $$
        	 \item[(ii)]
        	 The methods used above can be further extended to include some 
        	 additional 
        	 values of $\alpha$. If we let $\sigma=\frac{m}{n}$ with $m,n \in 
        	 \N$, we 
        	 find that $\inf_i \delta_i = \frac{1}{n}$ and similar arguments as 
        	 above then lead to the condition
        	 $$
        	 0< \sqrt{\frac{1}{n^2} + \frac{\pi^2}{m^2}} - \pi \left( 
        	 \frac{1}{2n} + 
        	 \frac{e^\frac{m}{3n}}{4} \right) .
        	 $$
        	 As $n \to \infty$, we find that this holds for $m=1,2,3$.
        	 
        	 Moreover, we note that numerical computations imply that the 
        	 result 
        	 should also hold for $\sigma=1,2,3$.
	\end{itemize}
\end{remark}
%
%
%
%
%
%
%
%
%
\section{Variational Characterization of Ground States}
\label{Section: Variational Formulation}
We now return to solutions of \eqref{Reduced equation}.
Setting
$$
L_{\alpha,m} \coloneqq -\Delta + \alpha^2 \del_{\theta}^2 + m
$$
for $\alpha>1$, $m \in \R$, our first goal is to find a suitable domain for the 
quadratic form
$$
u \mapsto \langle L_{\alpha,m} u,u \rangle_{L^2(\B)} = \int_\B (L_{\alpha,m} u) 
u \, dx = \int_{\B }\left(|\nabla u|^2-\alpha^2 |\del_\theta u|^2 + m 
u^2\right) \, dx .
$$
In order to simplify the notation, we set 
$$
\begin{aligned} 
\cI_{\alpha,m}^+ & \coloneqq \left\{ (\ell,k) \in \N_0 \times \N: j_{\ell,k}^2 - \alpha^2 \ell^2 +m>0  \right\} \\
\cI_{\alpha,m}^0 & \coloneqq \left\{ (\ell,k) \in \N_0 \times \N: j_{\ell,k}^2 - \alpha^2 \ell^2 +m=0  \right\} \\
\cI_{\alpha,m}^- & \coloneqq \left\{ (\ell,k) \in \N_0 \times \N: j_{\ell,k}^2 - \alpha^2 \ell^2 +m<0  \right\}
\end{aligned}
$$
for $\alpha>1$, $m \in \R$, i.e., the index sets of positive, zero and negative 
eigenvalues, respectively.
Instead of restricting ourselves to the sequence $(\alpha_n)$ given by 
Theorem~\ref{Theorem: Spectrum Characterization - Introduction}, we consider
$$
\cA\coloneqq \left\{ \alpha>1: \ \text{
	$|\cI_{\alpha,0}^0|<\infty$ \ and $\min_{(\ell,k) \not \in 
	\cI_{\alpha,0}^0} |j_{\ell,k} - \alpha \ell|>0$} \right\} .
$$
In particular, $\cA$ contains the sequence $(\alpha_n)_n$ and is therefore 
nonempty and unbounded. Moreover, writing $j_{\ell,k}^2-\alpha^2 \ell^2= 
(j_{\ell,k}+\alpha \ell)(j_{\ell,k}-\alpha \ell)$ we find that for any $\alpha 
\in\cA$ there exists 
$c_\alpha>0$ such that 
$$
|j_{\ell,k}^2-\alpha^2 \ell^2 | \geq c_\alpha j_{\ell,k}
$$
holds for $(\ell,k) \not \in \cI_{\alpha,0}^0$.
\begin{lemma} \label{Corollary: Spectrum has no accumulation point}
	Let $\alpha \in \cA$ and $m \in \R$.
	Then $\cI_{\alpha,m}^0$ is finite and there exists $c_{m}>0$ such 
	that any $(\ell,k) \not \in \cI_{\alpha,m}^0$ satisfy
	$$
	|j_{\ell,k}^2 - \alpha^2 \ell^2 + m| \geq c_{m} j_{\ell,k} .
	$$
	In particular, the spectrum of $L_{\alpha,m}$ has no finite accumulation 
	points.
\end{lemma}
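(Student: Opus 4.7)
The plan is to reduce everything to the case $m=0$ already handled by the definition of $\cA$. The key point is that shifting by a real constant $m$ can only affect the kernel and the lower bound on a bounded piece, because $j_{\ell,k}\to\infty$ as $\ell+k\to\infty$.

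First I would dispose of the case $m=0$, where the claim is exactly the defining property of $\cA$, and from now on fix $m\neq 0$. I would then show $\cI_{\alpha,m}^0$ is finite. By definition of $\cA$, the finite set $\cI_{\alpha,0}^0$ carries the relation $j_{\ell,k}^2-\alpha^2\ell^2=0$, so those indices give $j_{\ell,k}^2-\alpha^2\ell^2+m=m\neq 0$ and hence do not contribute to $\cI_{\alpha,m}^0$. For any $(\ell,k)\notin\cI_{\alpha,0}^0$ the bound
\[
|j_{\ell,k}^2-\alpha^2\ell^2|\geq c_\alpha j_{\ell,k}
\]
forces, whenever $j_{\ell,k}^2-\alpha^2\ell^2+m=0$, the estimate $j_{\ell,k}\leq |m|/c_\alpha$. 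Since $j_{\ell,k}\to\infty$ as $\ell+k\to\infty$ (for instance $j_{\ell,k}\geq j_{0,k}$ and $j_{\ell,k}\geq \ell$ from Proposition~\ref{Prop: Properties of Bessel function zeros}), only finitely many $(\ell,k)$ satisfy this, proving finiteness.

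For the lower bound, I would split $(\ell,k)\notin\cI_{\alpha,m}^0$ into three pieces. If $(\ell,k)\in\cI_{\alpha,0}^0$, then $|j_{\ell,k}^2-\alpha^2\ell^2+m|=|m|\geq (|m|/M)\,j_{\ell,k}$ where $M=\max_{(\ell,k)\in\cI_{\alpha,0}^0}j_{\ell,k}<\infty$. If $(\ell,k)\notin\cI_{\alpha,0}^0$ and $j_{\ell,k}>2|m|/c_\alpha$, then the reverse triangle inequality gives
\[
|j_{\ell,k}^2-\alpha^2\ell^2+m|\geq c_\alpha j_{\ell,k}-|m|\geq \tfrac{c_\alpha}{2}j_{\ell,k}.
\]
The remaining indices, with $(\ell,k)\notin\cI_{\alpha,0}^0$ and $j_{\ell,k}\leq 2|m|/c_\alpha$, form a finite set $F$; on $F\setminus\cI_{\alpha,m}^0$ the quantity $|j_{\ell,k}^2-\alpha^2\ell^2+m|/j_{\ell,k}$ is a finite set of strictly positive numbers, so it is bounded below by some $c''>0$. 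Taking $c_m$ to be the minimum of the three constants yields the claim.

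For (iii), I would recall that $L_{\alpha,m}$ is self-adjoint on $L^2(\B)$ with pure point spectrum given by $\{j_{\ell,k}^2-\alpha^2\ell^2+m\}$, so its spectrum is the closure of this set. A finite accumulation point $\Lambda$ would give sequences $(\ell_i),(k_i)$ with $j_{\ell_i,k_i}^2-\alpha^2\ell_i^2+m\to\Lambda$ and $\ell_i+k_i\to\infty$, hence $j_{\ell_i,k_i}\to\infty$. Since $\cI_{\alpha,m}^0$ is finite, eventually $(\ell_i,k_i)\notin\cI_{\alpha,m}^0$ and then $|j_{\ell_i,k_i}^2-\alpha^2\ell_i^2+m|\geq c_m j_{\ell_i,k_i}\to\infty$, contradicting convergence to $\Lambda$. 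The only mildly delicate point is the handling of $(\ell,k)\in\cI_{\alpha,0}^0$ after the shift by $m$, but this set is finite by hypothesis, so it contributes only a finite, non-problematic correction to the constants.
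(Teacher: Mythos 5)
Your argument is correct and follows the same essential route as the paper: use the defining property of $\cA$ (the uniform lower bound $|j_{\ell,k}-\alpha\ell|\geq c_\alpha$ off $\cI_{\alpha,0}^0$, equivalently $|j_{\ell,k}^2-\alpha^2\ell^2|\geq c_\alpha j_{\ell,k}$), observe that the shift by $m$ is negligible once $j_{\ell,k}$ is large, and absorb the remaining finitely many indices into the constant. Where you differ is in the bookkeeping, and your version is in fact a bit more careful than the paper's. The paper obtains the $c_\alpha/2$ bound for indices with $\ell\geq\ell_0$ and $k\geq k_0$ and then takes the infimum over $\ell\leq\ell_0,\,k\leq k_0$, which as literally stated leaves out mixed indices (large $\ell$, small $k$ and vice versa) and also does not explicitly treat $(\ell,k)\in\cI_{\alpha,0}^0$, where the reverse triangle inequality off $\cI_{\alpha,0}^0$ cannot be invoked. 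Your three-way split — $(\ell,k)\in\cI_{\alpha,0}^0$ handled via $|m|\geq(|m|/M)\,j_{\ell,k}$; $(\ell,k)\notin\cI_{\alpha,0}^0$ with $j_{\ell,k}$ above the threshold $2|m|/c_\alpha$ handled by the reverse triangle inequality; and the remaining finite set by compactness — partitions the index set cleanly and uses a threshold on $j_{\ell,k}$ itself rather than on $\ell$ and $k$ separately, which is the natural quantity given that $j_{\ell,k}\to\infty$ as $\ell+k\to\infty$. The finiteness argument for $\cI_{\alpha,m}^0$ and the deduction of no finite accumulation points are also correct and match the paper in spirit.
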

\begin{proof}
	Let $c_\alpha>0$ be given as above.
	We first note that
	\begin{align*}
		|j_{\ell,k}^2 - \alpha^2 \ell^2 + m| & = (j_{\ell,k} + \alpha \ell) 
		\left| j_{\ell,k}-\alpha \ell + \frac{m}{j_{\ell,k}+\alpha \ell} 
		\right|,
	\end{align*}
    so the fact that $\cI_{\alpha,0}^0$ is finite by assumption implies that 
    $\cI_{\alpha,m}^0$ is finite as well. 
    Moreover, there exist $\ell_0, k_0 \in \N$ such that
	$$
	|j_{\ell,k}^2 - \alpha_n^2 \ell^2 + m| \geq (j_{\ell,k} + \alpha \ell) 
	\frac{c_\alpha}{2}
	$$
	holds for all $(\ell,k) \not \in \cI_{\alpha,m}^0$ with $\ell \geq 
	\ell_0$, $k \geq k_0$. Setting
	$$
	c_{m} \coloneqq \min \left\{ \frac{c_\alpha}{2} , \min_{\substack{(\ell,k) 
	\not \in \cI_{\alpha,m}^0 \\ \ell \leq 
	\ell_0, k \leq k_0  }} \left| 
	j_{\ell,k}-\alpha \ell + \frac{m}{j_{\ell,k}+\alpha \ell} \right|   
	\right\} >0
	$$
	then completes the proof.
\end{proof}
Next, we recall the eigenfunctions $\phi_{\ell,k},\psi_{\ell,k}$ given in 
\eqref{eq:eigenfunction-representation} and set
$$
E_{\alpha,m} \coloneqq \left\{  u \in L^2(\B): \sum_{\ell=0}^\infty \sum_{k=1}^\infty {\left|j_{\ell,k}^2-\alpha^2 \ell^2 +m \right|} \left(|\langle u,\phi_{\ell,k} \rangle|^2 + |\langle u,\psi_{\ell,k} \rangle|^2 \right) < \infty  \right\} 
$$
for $\alpha \in \cA$, $m \in \R$ and endow $E_{\alpha,m}$ with the scalar product
\begin{align*}
\langle u,v \rangle_{\alpha,m} \coloneqq & \sum_{\ell=0}^\infty \sum_{k=1}^\infty {\left|j_{\ell,k}^2-\alpha^2 \ell^2 +m\right|} \left(\langle u,\phi_{\ell,k} \rangle \langle v,\phi_{\ell,k} \rangle + \langle u,\psi_{\ell,k} \rangle \langle v,\psi_{\ell,k} \rangle \right) \\
& + \sum_{(\ell,k) \in \cI_{\alpha,m}^0}  \left(\langle u,\phi_{\ell,k} \rangle \langle v,\phi_{\ell,k} \rangle + \langle u,\psi_{\ell,k} \rangle \langle v,\psi_{\ell,k} \rangle \right)  	.
\end{align*}
In the following, $\| \cdot\|_{\alpha,m}$ denotes the norm induced by $\langle 
\cdot,\cdot \rangle_{\alpha,m}$. 
\begin{remark} 
	For fixed $\alpha \in \cA$, the norm $\|\cdot\|_{\alpha,m}$ is equivalent 
	to 
	$\|\cdot\|_{\alpha,0}$ and $E_{\alpha,m}=E_{\alpha,0}$, i.e., the spaces 
	are 
	equal as sets. Nonetheless, the use of an $m$-dependent scalar product is 
	useful for the variational methods we will employ below. 
\end{remark}
We now consider the following decomposition associated to the eigenspaces of
positive, zero and negative eigenvalues of $L_{\alpha,m}$, respectively:
\begin{align*}
E_{\alpha,m}^+ & \coloneqq \left\{ u \in E_{\alpha,m}: \ \int_\B u \,
\phi_{\ell,k} \, dx = \int_\B u \, \psi_{\ell,k} \, dx = 0 \ \text{for 
$(\ell,k) 
\in \cI_{\alpha,m}^0 \cup \cI_{\alpha,m}^-$}  \right\}  \\
E_{\alpha,m}^0 & \coloneqq  \left\{ u \in E_{\alpha,m}: \ \int_\B u \,
\phi_{\ell,k} \, dx = \int_\B u \, \psi_{\ell,k} \, dx = 0 \ \text{for 
$(\ell,k) 
\in \cI_{\alpha,m}^+ \cup \cI_{\alpha,m}^-$}  \right\}  \\
E_{\alpha,m}^- & \coloneqq  \left\{ u \in E_{\alpha,m}: \ \int_\B u \,
\phi_{\ell,k} \, dx = \int_\B u \, \psi_{\ell,k} \, dx = 0 \ \text{for 
$(\ell,k) 
\in \cI_{\alpha,m}^+ \cup \cI_{\alpha,m}^0$}  \right\}  
\end{align*}
so that, in particular,
$$
E_{\alpha,m} = 	E_{\alpha,m}^+ \oplus 	E_{\alpha,m}^0 \oplus 	E_{\alpha,m}^- 
= E_{\alpha,m}^+ \oplus F_{\alpha,m} ,
$$
where we have set $F_{\alpha,m}\coloneqq E_{\alpha,m}^0 \oplus E_{\alpha,m}^-$. In the following, we will routinely write
$$
u=u^+ + u^0 + u^-
$$
where $u^+ \in 	E_{\alpha,m}^+$, $u^0 \in 	E_{\alpha,m}^0$, $u^- \in 	
E_{\alpha,m}^-$ are uniquely determined. The use of the norm 
$\|\cdot\|_{\alpha,m}$ allows us to write
$$
\langle L_{\alpha,m} u,u \rangle_{L^2(\B)} = \int_{\B }\left(|\nabla 
u|^2-\alpha^2 |\del_\theta u|^2 + m u^2\right) \, dx = 
\|u^+\|_{\alpha,m}^2-\|u^-\|_{\alpha,m}^2.
$$
Importantly, $E_{\alpha,m}$ has the following embedding properties:
\begin{proposition} \label{Proposition: K properties}
	Let $p \in (2,4)$, $\alpha \in \cA$ and $ m \in \R$. Then $E_{\alpha,m} \subset L^p(\B)$ and the embedding
	$$
	E_{\alpha,m} \hookrightarrow L^p(\B)
	$$
	is compact.
\end{proposition}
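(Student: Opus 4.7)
The plan is to show that $E_{\alpha,m}$ embeds continuously into the fractional Sobolev space $H^{1/2}_0(\B)$ and then invoke the compact embedding $H^{1/2}_0(\B) \hookrightarrow L^p(\B)$ for $p < 2/(1-\tfrac{1}{2}) = 4$ recalled in Section~\ref{Section: Preliminaries}. The key ingredient is the linear (in $j_{\ell,k}$) lower bound on the eigenvalues $|j_{\ell,k}^2-\alpha^2 \ell^2+m|$ supplied by Lemma~\ref{Corollary: Spectrum has no accumulation point}, which is exactly the right order to dominate the $H^{1/2}$ weights $j_{\ell,k}^{2\cdot(1/2)} = j_{\ell,k}$.

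First I would fix $\alpha \in \cA$, $m \in \R$ and let $c_m>0$ and the finite set $\cI_{\alpha,m}^0$ be as in Lemma~\ref{Corollary: Spectrum has no accumulation point}. For $u \in E_{\alpha,m}$ with coefficients $a_{\ell,k}\coloneqq\langle u,\phi_{\ell,k}\rangle$, $b_{\ell,k}\coloneqq\langle u,\psi_{\ell,k}\rangle$, I would split the $H^{1/2}$ series
\begin{align*}
\|u\|_{H^{1/2}}^2
&= \sum_{(\ell,k)\notin \cI_{\alpha,m}^0} j_{\ell,k}\bigl(|a_{\ell,k}|^2+|b_{\ell,k}|^2\bigr)
+ \sum_{(\ell,k)\in \cI_{\alpha,m}^0} j_{\ell,k}\bigl(|a_{\ell,k}|^2+|b_{\ell,k}|^2\bigr).
\end{align*}
For indices $(\ell,k)\notin \cI_{\alpha,m}^0$ the inequality $c_m j_{\ell,k}\le |j_{\ell,k}^2-\alpha^2 \ell^2+m|$ bounds the first sum by $c_m^{-1}\|u\|_{\alpha,m}^2$, while the second sum is finite-dimensional and uses weights $j_{\ell,k}$ that are bounded by the (finite) maximum $M\coloneqq \max_{(\ell,k)\in\cI_{\alpha,m}^0} j_{\ell,k}$, so it is dominated by $M \|u\|_{\alpha,m}^2$ thanks to the $\cI_{\alpha,m}^0$-term in the definition of $\langle\cdot,\cdot\rangle_{\alpha,m}$. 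Together this yields a constant $C = C(\alpha,m)>0$ with
$$
\|u\|_{H^{1/2}} \leq C \|u\|_{\alpha,m} \qquad \text{for all } u \in E_{\alpha,m},
$$
which in particular shows $E_{\alpha,m}\subset H^{1/2}_0(\B)$ and gives a continuous embedding.

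Composing with the compact embedding $H^{1/2}_0(\B)\hookrightarrow L^p(\B)$ for $p\in(2,4)$ (recorded in Section~\ref{Section: Preliminaries} as a consequence of interpolation and the classical Sobolev embedding) then produces a compact embedding $E_{\alpha,m}\hookrightarrow L^p(\B)$, concluding the proof. The only mildly delicate point is the treatment of the finite-dimensional piece $E_{\alpha,m}^0$, but there all norms are equivalent, so it just contributes a harmless additive constant; the essential work is done by the linear lower bound from Lemma~\ref{Corollary: Spectrum has no accumulation point}, which is precisely why the restriction to $\alpha\in\cA$ was imposed in the first place.
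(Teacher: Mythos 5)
Your proposal is correct and follows essentially the same route as the paper: the lower bound $|j_{\ell,k}^2-\alpha^2\ell^2+m|\geq c_m j_{\ell,k}$ from Lemma~\ref{Corollary: Spectrum has no accumulation point} gives a continuous embedding $E_{\alpha,m}\hookrightarrow H^{1/2}_0(\B)$, which is then composed with the compact embedding $H^{1/2}_0(\B)\hookrightarrow L^p(\B)$ for $p\in(2,4)$. The only cosmetic difference is your explicit bound on the finitely many kernel modes via the extra $\cI_{\alpha,m}^0$-term in the scalar product, where the paper simply notes that $E_{\alpha,m}^0$ is finite-dimensional and consists of smooth functions.
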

\begin{proof}
	Because of the compact embedding $H^\frac{1}{2}(\B) \hookrightarrow 
	L^p(\B)$ it is enough to show that the embedding
	$$
	E_{\alpha,m} \hookrightarrow H^\frac{1}{2}(\B)
	$$
	is well-defined and continuous. We first note that it suffices to consider 
	$u \in 	E_{\alpha,m}^+ \oplus 	E_{\alpha,m}^-$, since the space $	
	E_{\alpha,m}^0$ is finite--dimensional and only contains smooth functions. 
	By Lemma~\ref{Corollary: Spectrum 
	has no accumulation point}, there exists $c>0$ such that 
	$$
	|j_{\ell,k}^2 - \alpha^2 \ell^2 + m| \geq c j_{\ell,k} 
	$$
	holds for $(\ell,k) \not\in \cI_{\alpha,m}^0$. This implies
	\begin{align*} 
	\|u\|_{H^\frac{1}{2}}^2 &= \sum_{\ell=0}^\infty \sum_{k=1}^\infty j_{\ell,k} \left(|\langle u,\phi_{\ell,k} \rangle|^2 + |\langle u,\psi_{\ell,k} \rangle|^2 \right) \\
	& \leq \frac{1}{c} \sum_{\ell=0}^\infty \sum_{k=1}^\infty |j_{\ell,k}^2 - 
	\alpha^2 \ell^2 +m|^2 \left(|\langle u,\phi_{\ell,k} \rangle|^2 + |\langle 
	u,\psi_{\ell,k} \rangle|^2 \right) \\
	& = \frac{1}{c} \|u\|_{{\alpha,m}}^2 
	\end{align*}
	and thus the claim.
\end{proof}
\begin{remark} 
	It is natural to ask for the optimal $q>2$ such that the preceding 
	proposition holds for $p \in (2,q)$. We conjecture that $q=10$ due to two 
	observations:
	
	Firstly, $q=10$ appears in the degenerate 
	elliptic case $\alpha=1$ treated in \cite{Kuebler-Weth} as the critical 
	exponent for Sobolev-type embeddings for the associated degenerate 
	operator. Secondly, this exponent also appears in a Poho\v{z}aev-type 
	identity in \cite{Lupo-Payne: Critical exponents} with respect to related
	semilinear problems involving the Tricomi operator.
\end{remark}
In particular, the map
$$
I_p: E_{\alpha,m} \to \R, \qquad I_p(u)\coloneqq \frac{1}{p} \int_\B |u|^p \, 
dx = \frac{1}{p} \|u\|_p^p 
$$
is well-defined and continuous for $p \in (2,4)$.
We note the following properties corresponding to the conditions of Theorem 35 in \cite{Szulkin-Weth}.
\begin{lemma}  \label{Lemma: I properties}
	Let $\alpha \in \cA$, $m \in \R$ and $p \in (2,4)$. Then the following 
	properties hold:
	\begin{enumerate}
		\item[(i)]
		$\frac{1}{2} I_p'(u) u > I_p(u)>0$ for all $u \not \equiv 0$ and $I_p$ is weakly lower semicontinuous.
		\item[(ii)]
		$I_p'(u)=o(\|u\|_{\alpha,m})$ as $u \to 0$.
		\item[(iii)]
		$\frac{I_p(su)}{s^2} \to \infty$ uniformly in $u$ on weakly compact subsets of $E_{\alpha,m} \setminus \{ 0\}$ as $s \to \infty$.
		\item[(iv)]
		$I_p'$ is a compact map.
	\end{enumerate}	
\end{lemma}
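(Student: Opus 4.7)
The four properties are all standard consequences of the compact embedding $E_{\alpha,m} \hookrightarrow L^p(\B)$ established in Proposition~\ref{Proposition: K properties}, combined with the fact that $I_p$ is the $\frac{1}{p}$-multiple of a $p$-th power of an $L^p$ norm with $p>2$. My plan is to handle them in the order (i), (ii), (iv), (iii), since the compactness claim (iv) is what ultimately gives most of the uniformity needed for (iii).

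For (i), I would compute $I_p'(u)v = \int_\B |u|^{p-2}u\, v\, dx$, so that $I_p'(u)u = p\, I_p(u)$ and hence $\tfrac{1}{2}I_p'(u)u = \tfrac{p}{2}I_p(u) > I_p(u)$ since $p>2$; positivity of $I_p$ on nonzero $u$ is immediate. For the weak lower semicontinuity I would note something stronger: if $u_n \weakto u$ in $E_{\alpha,m}$, then $u_n \to u$ strongly in $L^p(\B)$ by Proposition~\ref{Proposition: K properties}, so in fact $I_p(u_n) \to I_p(u)$.

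For (ii), I would estimate, for $v \in E_{\alpha,m}$, the derivative by Hölder and the embedding constant $C_p$ of $E_{\alpha,m} \hookrightarrow L^p(\B)$:
\[
|I_p'(u)v| \;\le\; \|u\|_p^{p-1} \|v\|_p \;\le\; C_p^{p}\, \|u\|_{\alpha,m}^{p-1}\, \|v\|_{\alpha,m}.
\]
Taking supremum over $\|v\|_{\alpha,m}\le 1$ yields $\|I_p'(u)\|_{E_{\alpha,m}^*} \le C_p^p\, \|u\|_{\alpha,m}^{p-1}$, which is $o(\|u\|_{\alpha,m})$ as $u \to 0$ because $p-1>1$. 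For (iv), let $(u_n)$ be bounded in $E_{\alpha,m}$. Passing to a subsequence, $u_n \weakto u$ and by compact embedding $u_n \to u$ strongly in $L^p(\B)$. The Nemytskii map $w \mapsto |w|^{p-2}w$ is continuous from $L^p(\B)$ to $L^{p'}(\B)$ with $p'=p/(p-1)$, so $|u_n|^{p-2}u_n \to |u|^{p-2}u$ in $L^{p'}(\B)$. The same Hölder estimate as above then gives
\[
\|I_p'(u_n)-I_p'(u)\|_{E_{\alpha,m}^*} \;\le\; C_p \bigl\||u_n|^{p-2}u_n - |u|^{p-2}u\bigr\|_{p'} \;\longrightarrow\; 0,
\]
so $I_p'$ sends bounded sequences to convergent ones in $E_{\alpha,m}^*$, i.e.\ is compact.

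Finally, for (iii), I would compute directly $\frac{I_p(su)}{s^2} = \frac{s^{p-2}}{p}\|u\|_p^p$ for $s>0$, so uniformity on a weakly compact set $K \subset E_{\alpha,m}\setminus\{0\}$ reduces to showing $\inf_{u\in K}\|u\|_p > 0$. The step I expect to require some care is this last infimum: by Proposition~\ref{Proposition: K properties} the map $E_{\alpha,m} \to \R$, $u \mapsto \|u\|_p$ is continuous along weakly convergent sequences, hence weakly sequentially continuous; since $K$ is weakly (sequentially) compact it attains its infimum at some $u_0 \in K$, and $u_0 \neq 0$ in $L^2(\B)$ forces $\|u_0\|_p > 0$. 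Combining this with $s^{p-2}\to\infty$ (again using $p>2$) yields the claim.
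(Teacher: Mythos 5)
Your proposal is correct and follows essentially the same route as the paper: the paper dismisses (i), (ii), (iv) as routine consequences of Proposition~\ref{Proposition: K properties} (which you spell out via the Nehari identity $I_p'(u)u=pI_p(u)$, Hölder, and continuity of the Nemytskii map), and for (iii) the paper likewise reduces uniformity to showing $\inf_{u\in W}\|u\|_p>0$, proving it by contradiction using compactness of the embedding and weak compactness of $W$ — your version attains the infimum via weak sequential continuity of $\|\cdot\|_p$, which is the same idea phrased differently.
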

\begin{proof}
	The properties (i),(ii) and (iv)  follow from routine computations and Proposition~\ref{Proposition: K properties}, while (iii) has essentially been proved in \cite[Theorem 16]{Szulkin-Weth}, though we can give a slightly simpler argument in this case:
	
	Let $W \subset E_{\alpha,m} \setminus \{ 0\}$ be a weakly compact subset. We claim that
	there exists $c>0$ such that $\|u\|_p \geq c$ holds for $u \in W$. Indeed, if this was false, there would exist a sequence $(u_n)_n \subset W$ such that $u_n \to 0$ in $L^p(\B)$. The weak compactness of $W$ and Proposition~\ref{Proposition: K properties} would then imply $u_n \weakto 0$, contradicting the fact that $0 \notin W$. We thus have 
	$$
	\frac{I_p(su)}{s^2} = \frac{s^{p-2}}{p} \|u\|_p^p \geq  \frac{c^{p}}{p} s^{p-2} 
	$$
	and clearly the right hand side goes to infinity uniformly as $s \to 
	\infty$.
\end{proof}
In the following, we always assume that $p \in (2,4)$ is fixed and consider the 
energy functional $\Phi_{\alpha,m}: E_{\alpha,m} \to \R$ given by
\begin{align*} 
\Phi_{\alpha,m}(u) 
\coloneqq \,  & \frac{1}{2} \|u^+\|_{\alpha,m}^2 - \frac{1}{2} \|u^-\|_{\alpha,m} - I_p(u) \\
 =\,  & \frac{1}{2} \int_{\B }\left(|\nabla u|^2-\alpha^2 |\del_\theta u|^2 + m 
 u^2\right) \, dx - \frac{1}{p} \int_\B |u|^p \, dx .
\end{align*}
In particular, any critical point $u \in E_{\alpha,m}$ of $\Phi_{\alpha,m}$ 
satisfies
$$
\int_{\B} |u|^{p-2} u \, \phi \, dx = \langle u^+,\phi \rangle_{\alpha,m} - 
\langle u^-,\phi \rangle_{\alpha,m} = \int_\B u \, L_{\alpha,m} \phi \, dx
$$
and can thus be interpreted as a weak solution of \eqref{Reduced equation}.
As outlined in the introduction, we will now characterize ground states of 
$\Phi_{\alpha,m}$  
by considering the generalized Nehari manifold
$$
\cN_{\alpha,m}\coloneqq \left\{ u \in E_{\alpha,m} \setminus F_{\alpha,m}: \ 
\Phi_{\alpha,m}'(u) u=0 \ \text{and } \Phi_{\alpha,m}'(u)v = 0 \ \text{for all 
$v \in F_{\alpha,m}$} \right\} .
$$
In particular, $\cN_{\alpha,m}$ contains all nontrivial critical points of $\Phi$. 
Consequently, the value 
$$
c_{\alpha,m} \coloneqq \inf_{u \in \cN_{\alpha,m}} \Phi_{\alpha,m}(u) 
$$
is the ground state energy in the sense that any critical point $u \in 
E_{\alpha,m} \setminus \{0\}$ of $\Phi_{\alpha,m}$ satisfies 
$\Phi_{\alpha,m}(u) \geq c_{\alpha,m}$. This motivates the following definition.
\begin{definition}
	Let $\alpha \in \cA$, $m \in \R$ and $p \in (2,4)$. We call a function $u 
	\in E_{\alpha,m}$ a \textbf{ground state solution} of \eqref{Reduced 
	equation}, if $u$ is a critical point of $\Phi_{\alpha,m}$ and satisfies 
	$\Phi_{\alpha,m}(u)=c_{\alpha,m}$.
\end{definition}
In order to show that ground state solutions exist, we wish to verify 
that $\Phi_{\alpha,m}$ satisfies condition $(B_2)$ from \cite{Szulkin-Weth}. To 
this end, we let $u \in E_{\alpha,m} \setminus F_{\alpha,m}$ and consider
$$
\widehat E_{\alpha,m}(u)\coloneqq \{ t u + w: \ t \geq 0, \ w \in F_{\alpha,m} \}= \R^+ u \oplus F_{\alpha,m} .
$$
Importantly, $u \in \cN_{\alpha,m}$ if and only if $u$ is a critical point of 
$\Phi_{\alpha,m} \big|_{\widehat E_{\alpha,m}(u)}$. Moreover, we have $\widehat 
E_{\alpha,m}(u)=\widehat E_{\alpha,m}(tu^+)$ for all $t \geq 0$, $u \in 
E_{\alpha,m} \setminus F_{\alpha,m}$, hence when considering $\widehat 
E_{\alpha,m}(u)$ we may always assume $u \in E^+_{\alpha,m}$. This will be 
useful in the 
following.
\begin{lemma} \label{Lemma: Maximum along halfspaces}
	For each $u \in E_{\alpha,m} \setminus F_{\alpha,m}$ there exists a unique 
	nontrivial critical point $\hat m(u)$ of $\Phi_{\alpha,m} \big|_{\widehat 
	E_{\alpha,m}}$. Moreover, $\hat m(u)$ is the unique global maximum of 
	$\Phi_{\alpha,m} \big|_{\widehat E_{\alpha,m}}$.
\end{lemma}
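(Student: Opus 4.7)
The plan is to reduce the problem to a two-parameter optimization. Since $\widehat E_{\alpha,m}(u)=\widehat E_{\alpha,m}(u^+)$ (by the remark preceding the lemma) and this half-space is invariant under positive rescaling of the first variable, I may assume $u\in E_{\alpha,m}^+\setminus\{0\}$; every element of $\widehat E_{\alpha,m}(u)$ then has the unique form $v=su+w$ with $s\geq 0$ and $w\in F_{\alpha,m}=E^0_{\alpha,m}\oplus E^-_{\alpha,m}$, satisfying $v^+=su$ and $v^-=w^-$. Consequently,
$$
\phi(s,w)\coloneqq \Phi_{\alpha,m}(su+w)=\tfrac{s^2}{2}\|u\|_{\alpha,m}^2-\tfrac12\|w^-\|_{\alpha,m}^2-I_p(su+w),
$$
and the lemma reduces to showing that $\phi$ has exactly one critical point on $[0,\infty)\times F_{\alpha,m}$ and that this critical point is the unique global maximum, with $s>0$.

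For existence I would proceed in two steps. First, a direct computation gives $\phi(s,0)=\tfrac{s^2}{2}\|u\|_{\alpha,m}^2-\tfrac{s^p}{p}\|u\|_p^p>0$ for small enough $s>0$, so $\sup\phi>0$. Second, to show $\phi(s,w)\to-\infty$ as $\|su+w\|_{\alpha,m}\to\infty$, I would argue by contradiction: suppose $(s_n,w_n)$ satisfies $\|v_n\|_{\alpha,m}\to\infty$ and $\phi(s_n,w_n)\geq -M$, and set $\tilde v_n\coloneqq v_n/\|v_n\|_{\alpha,m}$. Passing to a weakly convergent subsequence $\tilde v_n\weakto\tilde v$, the case $\tilde v\neq 0$ is ruled out by Lemma~\ref{Lemma: I properties}(iii), which forces $I_p(v_n)/\|v_n\|_{\alpha,m}^2\to\infty$ and contradicts the lower bound on $\phi(s_n,w_n)/\|v_n\|_{\alpha,m}^2$; the case $\tilde v=0$ is excluded because $\tilde v\weakto 0$ forces $s_n/\|v_n\|_{\alpha,m}\to 0$ and $\|\tilde v_n^0\|_{\alpha,m}\to 0$ (by the finite-dimensionality of $E^0_{\alpha,m}$), and the bound on $\phi$ then forces also $\|\tilde v_n^-\|_{\alpha,m}\to 0$, contradicting $\|\tilde v_n\|_{\alpha,m}=1$. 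Continuity of $\phi$ and this downward coercivity yield a maximizer $(s^*,w^*)$; since $\phi(0,w)=-\tfrac12\|w^-\|_{\alpha,m}^2-I_p(w)\leq 0$, we must have $s^*>0$, and the first-order conditions show that $\hat m(u)\coloneqq s^*u+w^*$ lies in $\cN_{\alpha,m}$ and is a critical point of $\Phi_{\alpha,m}\big|_{\widehat E_{\alpha,m}(u)}$.

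Uniqueness is the main obstacle, since $s\mapsto\phi(s,w)$ is generally not concave for fixed $w$. For fixed $s>0$, however, $w\mapsto\phi(s,w)$ \emph{is} strictly concave: $w\mapsto\tfrac12\|w^-\|_{\alpha,m}^2$ is convex and $w\mapsto I_p(su+w)$ is strictly convex since $t\mapsto|t|^p$ is strictly convex for $p>2$, so $\phi(s,\cdot)$ has a unique critical point $w^*(s)$, necessarily its global maximum. To rule out two distinct positive values of $s$ yielding critical points, I would adapt the classical Nehari-manifold argument of Szulkin--Weth~\cite{Szulkin-Weth}: any critical point $y\in\widehat E_{\alpha,m}(u)$ lies in $\cN_{\alpha,m}$, so $\Phi_{\alpha,m}'(y)$ vanishes on $\R y^+\oplus F_{\alpha,m}=\R u\oplus F_{\alpha,m}$, whence $\Phi_{\alpha,m}'(y)(v-y)=0$ for every $v\in\widehat E_{\alpha,m}(u)$. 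A second-order expansion around $y$ then yields
$$
\Phi_{\alpha,m}(v)-\Phi_{\alpha,m}(y)=\tfrac12\|v^+-y^+\|_{\alpha,m}^2-\tfrac12\|v^--y^-\|_{\alpha,m}^2-R(v,y),
$$
where $R(v,y)\coloneqq\int_\B\bigl[\tfrac{|v|^p}{p}-\tfrac{|y|^p}{p}-|y|^{p-2}y(v-y)\bigr]\,dx\geq 0$ is the Taylor remainder of the strictly convex integrand $|\cdot|^p/p$ and vanishes only for $v=y$. The crucial absorption step combines this identity with the Nehari identities satisfied by $y$ and the sharp algebraic inequality $\tfrac{(1+t)^p-1-pt}{p}\geq\tfrac{t^2}{2}$, valid for $p>2$ and $t\geq -1$ with equality iff $t=0$, to show that the seemingly positive term $\tfrac12\|v^+-y^+\|_{\alpha,m}^2$ is in fact dominated by $R(v,y)$; hence the right-hand side is strictly negative whenever $v\neq y$. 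This shows that $y$ is simultaneously the unique critical point and the unique global maximum of $\Phi_{\alpha,m}\big|_{\widehat E_{\alpha,m}(u)}$, completing the proof.
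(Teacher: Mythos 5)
Your overall architecture matches the paper's: reduce to $u\in E^+_{\alpha,m}$, establish downward coercivity to bound a maximizing sequence, produce a maximizer as the critical point, and prove uniqueness via a second-order expansion in the style of Szulkin--Weth. Your coercivity argument and the treatment of the zero weak limit (using that $\tilde v_n^+$ lives in the one-dimensional space $\R u$, so weak convergence to $0$ forces $s_n/\|v_n\|_{\alpha,m}\to 0$) are correct and in fact slightly cleaner than the paper's.

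However, there is a genuine gap in the existence step. You write that ``continuity of $\phi$ and this downward coercivity yield a maximizer $(s^*,w^*)$.'' In an infinite-dimensional Hilbert space this is false: $F_{\alpha,m}\supset E^-_{\alpha,m}$ is infinite-dimensional, so bounded sets in $\widehat E_{\alpha,m}(u)$ are not (strongly) compact, and a continuous coercive function need not attain its supremum. What is actually needed --- and what the paper supplies --- is that $\Phi_{\alpha,m}\big|_{\widehat E_{\alpha,m}(u)}$ is \emph{weakly} upper semicontinuous. This relies crucially on the decomposition: for $v_n\in\widehat E_{\alpha,m}(u)$ with $v_n\weakto v$, the positive part $v_n^+$ lies in the one-dimensional subspace $\R u$ and therefore converges \emph{strongly}, so $\|v_n^+\|_{\alpha,m}^2\to\|v^+\|_{\alpha,m}^2$; meanwhile $\|\cdot\|_{\alpha,m}$ on the negative component and $I_p$ are weakly lower semicontinuous. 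Without this observation the existence of the maximizer is not justified. Your coercivity argument does furnish a bounded maximizing sequence and a weakly convergent subsequence, but the passage to the weak limit is missing.

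A secondary point: your uniqueness sketch invokes the scalar inequality $\frac{(1+t)^p-1-pt}{p}\geq\frac{t^2}{2}$, but this alone only controls the case where $v$ and $y$ are pointwise proportional; in the identity
$\Phi_{\alpha,m}(v)-\Phi_{\alpha,m}(y)=\tfrac12\|v^+-y^+\|_{\alpha,m}^2-\tfrac12\|v^--y^-\|_{\alpha,m}^2-R(v,y)$
the perturbation $v-y$ has a nontrivial component in $F_{\alpha,m}$ that is not a pointwise multiple of $y$. Closing the absorption argument requires the joint inequality in the two parameters $(s,v)$, which is exactly what \cite[Lemma~2.2]{Szulkin-Weth: Paper} provides and what the paper cites; your scalar estimate is not a substitute for it, though your reference to ``adapting the Szulkin--Weth argument'' suggests you intend to use that lemma in full.
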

\begin{proof}
	The following argument is essentially taken from \cite[Proposition 39]{Szulkin-Weth}. 
	Without loss of generality we may assume $u \in E^+_{\alpha,m}$ and 
	$\|u\|_{\alpha,m}=1$.
	
	\hypertarget{Claim 1}{\textbf{Claim 1:}} There exists $R>0$ such that 
	$\Phi_{\alpha,m}(v) \leq 0$ holds for $v \in \widehat E_{\alpha,m}$ and 
	$\|v\|_{\alpha,m} \geq R$.
	\\
	Indeed, if this was false there would exist a sequence $(v_n)_n \subset 
	\widehat E_{\alpha,m}(u)$ such that $\|v_n\|_{\alpha,m} \to \infty$ and 
	$\Phi_{\alpha,m}(v_n) >0$. 
	Setting $w_n \coloneqq \frac{v_n}{\|v_n\|_{\alpha,m}}$ we may pass to a weakly convergent subsequence and note that
	$$
	\begin{aligned}
	0 & < \frac{\Phi_{\alpha,m}(v_n)}{\|v_n\|_{\alpha,m}^2} = \frac{1}{2} 
	\|w_n^+\|_{\alpha,m}^2 - \frac{1}{2} \|w_n^-\|_{\alpha,m}^2 - \frac{1}{p} 
	\frac{\left\|\|v_n\|_{\alpha,m} w_n \right\|_p^p}{\|v_n\|_{\alpha,m}^2}  \\
	& 	\leq \|w_n\|_{\alpha,m}^2 - \frac{I(|\|v_n\|_{\alpha,m} 
	w_n)}{\|v_n\|_{\alpha,m}^2}
\end{aligned}
	$$
	so that Lemma~\ref{Lemma: I properties}(iii) implies $0 < 
	\frac{\Phi_{\alpha,m}(v_n)}{\|v_n\|_{\alpha,m}^2}  \to -\infty$ if the weak 
	limit is nonzero. Hence we must have $w_n \weakto 0$. Moreover, the 
	inequality 
	above also implies $\|w_n^+\|_{\alpha,m} \geq \|w_n^-\|_{\alpha,m}$. If 
	$w_n^+ \to 0$, the latter also implies $w_n^- \to 0$ and therefore
	$$
	\|w_n^0\|_{\alpha,m}^2=1-\|w_n^+\|_{\alpha,m}-\|w_n^-\|_{\alpha,m}^2 \to 1 .
	$$
	The fact that $E_{\alpha,m}^0$ is finite-dimensional then implies that 
	$w_n^0$ converges to a nontrivial function, which contradicts $w_n \weakto 
	0$. Hence $w_n^+$ cannot converge to zero and we may therefore pass to a 
	subsequence such that $\|w_n^+\|_{\alpha,m} \geq \gamma$ holds from some 
	$\gamma>0$ and all $n$. However, by definition of $\widehat 
	E_{\alpha,m}(u)$ we must have $w_n^+ = u \|w_n^+\|_{\alpha,m}$ and 
	therefore there exists $c>0$ such that $w_n^+ \to c u$ holds after passing 
	to a subsequence, contradicting $w_n \weakto 0$. This proves 
	\hyperlink{Claim 1}{Claim 1}.
	
	Next, we note that Lemma~\ref{Lemma: I properties} yields 
	$\Phi_{\alpha,m}(tu) = 
	\frac{t^2}{2} + o(t^2)$ as $t \to 0$ and therefore 
	$$
	\sup_{\widehat 	E_{\alpha,m}(u)} \Phi_{\alpha,m}>0.
	$$
	Now \hyperlink{Claim 1}{Claim 1} 
	implies that any maximizing sequence $(v_n)_n \subset \widehat 
	E_{\alpha,m}(u)$ must remain bounded, so we may assume $v_n \weakto v$ 
	after 
	passing to a subsequence. Moreover, recalling that
	$$
	\Phi_{\alpha,m}(v_n)=\frac{\|v_n^+\|_{\alpha,m}^2}{2} - 
	\frac{\|v_n^-\|_{\alpha,m}^2}{2}  - I_p(v_n) ,
	$$
	we can use that $v_n^+$ is a multiple of $u$, while the norm 
	$\|\cdot\|_{\alpha,m}$ and $I_p$ are weakly lower semicontinuous on 
	$E_{\alpha,m}$, making $\Phi_{\alpha,m}$ weakly upper semicontinuous on 
	$\widehat E_{\alpha,m}(u)$. It thus follows that $\sup_{\widehat 
	E_{\alpha,m}(u)} \Phi_{\alpha,m}$ is attained by a critical point $u_0$ of 
	$\Phi_{\alpha,m} \big|_{\widehat E_{\alpha,m}(u)}$. Noting that $\sup_{t 
	\geq 0} \Phi_{\alpha,m}(tu) >0$ since $u \in E_{\alpha,m}^+$, it follows 
	that $u_0 \in \cN_{\alpha,m}$.
	
	It remains to prove that this is the only critical point of 
	$\Phi_{\alpha,m} \big|_{\widehat E_{\alpha,m}(u)}$.
	To this end, we let $w \in E_{\alpha,m}$ such that $u_0+w \in \widehat E_{\alpha,m}(u)$. Since $\widehat E_{\alpha,m}(u)=\widehat E_{\alpha,m}(u_0)$, there exists $s \geq -1$ such that $u_0+w=(1+s) u_0 +v$ for some $v \in F_{\alpha,m}$. Setting
	$$
	\begin{aligned} 
	B(v_1,v_2) & \coloneqq \int_\B \left( \nabla v_1 \cdot \nabla v_2 - 
	\alpha^2 
	(\del_\theta v_1) (\del_\theta v_2)  + m v_1 v_2\right) \, dx \\
	& = \langle 
	v_1^+, v_2^+ \rangle_{\alpha,m} -  \langle v_1^-, v_2^- \rangle_{\alpha,m}
	\end{aligned}
	$$
	we then have
	$$
	\begin{aligned}
		\Phi_{\alpha,m}(u_0+w) - \Phi_{\alpha,m}(u_0) & =  \frac{1}{2} \left( 
		B((1+s) u_0 +v,(1+s) u_0 +v) - B(u_0,u_0) \right) \\
		& \quad - I_p((1+s) u_0 +v) + I_p(u_0) \\
		& =  - \frac{\|v^-\|_{\alpha,m}^2}{2} + B \left(u_0, s 
		\left(\frac{s}{2}-1\right)u_0+(1+s)v\right) \\
		& \quad  - I_p((1+s) u_0 +v) + I_p(u_0) ,
	\end{aligned}
	$$
	where the fact that $\Phi_{\alpha,m}'(u_0)(\cdot) = B(u_0, \cdot) -  
	I_p'(u_0)(\cdot)=0$ then implies
	$$
	\begin{aligned}
		   &  B \left(u_0, s \left(\frac{s}{2}-1\right)u_0+(1+s)v\right)  - I_p((1+s) u_0 +v) + I_p(u_0) \\
		 = &  I_p'(u_0) \left(s \left(\frac{s}{2}-1\right)u_0+(1+s)v\right) - I_p((1+s) u_0 +v) + I_p(u_0) \\
		 = &  \int_\B \left( |u_0|^{p-2} u_0 \left(s 
		 \left(\frac{s}{2}-1\right)u_0+(1+s)v\right) - \frac{1}{p} |(1+s) u_0 
		 +v|^p + \frac{1}{p} |u_0|^p \right) \, dx \\
		 < & 0
	\end{aligned}
	$$
	by \cite[Lemma 2.2]{Szulkin-Weth: Paper}.
\end{proof}
We can then give the following existence result.
\begin{proposition} \label{Prop: Existence of Ground States and Minimax 
Characterization}
	Let $\alpha \in \cA$, $m \in \R$ and $p \in (2,4)$. Then $c_{\alpha,m}$ is 
	positive and attained by a critical point of $\Phi_{\alpha,m}$. In 
	particular, \eqref{Reduced equation} thus has a ground state solution.
	Moreover, 
	$$
	c_{\alpha,m} = \inf_{w \in E_{\alpha,m} \setminus F_{\alpha,m}} \,  \max_{w 
	\in \widehat E_{\alpha,m}(u)} \Phi_{\alpha,m}(w) 
	$$
	holds.
\end{proposition}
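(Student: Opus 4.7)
The plan is to invoke the abstract machinery of Szulkin--Weth \cite{Szulkin-Weth}, specifically their general result on ground states for strongly indefinite functionals over the generalized Nehari manifold. The preparatory work is already in place: Lemma~\ref{Lemma: I properties} verifies the required structural properties of $I_p$, Proposition~\ref{Proposition: K properties} supplies the compact embedding $E_{\alpha,m} \hookrightarrow L^p(\B)$ required for compactness, and Lemma~\ref{Lemma: Maximum along halfspaces} yields the key fact that $\Phi_{\alpha,m}$ attains a unique maximum on each halfspace $\widehat E_{\alpha,m}(u)$, with this maximizer lying in $\cN_{\alpha,m}$.

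For the positivity of $c_{\alpha,m}$, I would exploit the identity $\|u^+\|_{\alpha,m}^2 - \|u^-\|_{\alpha,m}^2 = \|u\|_p^p$, valid on $\cN_{\alpha,m}$ by $\Phi_{\alpha,m}'(u) u = 0$, which gives
$$
\Phi_{\alpha,m}(u) \;=\; \left(\frac{1}{2} - \frac{1}{p}\right) \|u\|_p^p.
$$
By Lemma~\ref{Lemma: Maximum along halfspaces} we also have $\Phi_{\alpha,m}(u) \geq \sup_{t \geq 0} \Phi_{\alpha,m}(t\, u^+/\|u^+\|_{\alpha,m})$, and a direct computation of this one-dimensional maximum combined with the continuous embedding $E_{\alpha,m}^+ \hookrightarrow L^p(\B)$ produces a uniform positive lower bound for $\|u\|_p$ over $\cN_{\alpha,m}$, hence for $c_{\alpha,m}$.

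For attainment, I would take a minimizing sequence $(u_n)_n \subset \cN_{\alpha,m}$ and first establish its boundedness via a rescaling argument analogous to Claim~1 in the proof of Lemma~\ref{Lemma: Maximum along halfspaces}: if $\|u_n\|_{\alpha,m} \to \infty$ then $w_n := u_n/\|u_n\|_{\alpha,m}$ would produce, by the same weak-limit analysis, a contradiction with $\Phi_{\alpha,m}(u_n) \to c_{\alpha,m} < \infty$. Passing to a weak limit $u$ and invoking the compact embedding from Proposition~\ref{Proposition: K properties}, one sees that $u$ is nontrivial (by the uniform lower bound on $\|u_n\|_p$ established above). That $u$ is a critical point of $\Phi_{\alpha,m}$ with $\Phi_{\alpha,m}(u) = c_{\alpha,m}$ then follows from the standard parametrization of $\cN_{\alpha,m}$ by the unit sphere $S^+ \subset E^+_{\alpha,m}$ via the map $\hat m$ from Lemma~\ref{Lemma: Maximum along halfspaces}, reducing the problem to the minimization of the composed functional $\tilde \Phi_{\alpha,m} = \Phi_{\alpha,m} \circ \hat m$ on $S^+$, as carried out in \cite{Szulkin-Weth}.

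The minimax characterization is then immediate from Lemma~\ref{Lemma: Maximum along halfspaces}: for every $u \in E_{\alpha,m} \setminus F_{\alpha,m}$, the maximum of $\Phi_{\alpha,m}$ on $\widehat E_{\alpha,m}(u)$ equals $\Phi_{\alpha,m}(\hat m(u))$ with $\hat m(u) \in \cN_{\alpha,m}$, and conversely every element of $\cN_{\alpha,m}$ arises as such a $\hat m(u)$, so the two infima coincide. The main obstacle is the compactness step in the attainment: one must rule out loss of mass in the negative-spectrum component $\|u_n^-\|_{\alpha,m}$, and it is precisely here that the compact embedding of Proposition~\ref{Proposition: K properties}, which rests on the eigenvalue gap \eqref{eq: eigenvalue lower bound-restated} furnished by Theorem~\ref{Theorem: Spectrum has no accumulation point}, is indispensable and distinguishes the present indefinite setting from more classical definite cases.
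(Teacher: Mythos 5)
Your proposal is correct and takes essentially the same route as the paper: the paper's entire proof is the single observation that Lemma~\ref{Lemma: I properties} and Lemma~\ref{Lemma: Maximum along halfspaces} verify the hypotheses of \cite[Theorem 35]{Szulkin-Weth}, which delivers all three claims at once. Your added elaboration --- the Nehari identity $\Phi_{\alpha,m}(u) = (\tfrac12 - \tfrac1p)\|u\|_p^p$ for positivity, the boundedness-plus-compact-embedding argument for attainment, and the parametrization of $\cN_{\alpha,m}$ by $S^+$ via $\hat m$ --- is a faithful sketch of what Szulkin--Weth's abstract theorem does internally, and your closing remark correctly identifies the compactness furnished by Proposition~\ref{Proposition: K properties} (hence by the spectral gap \eqref{eq: eigenvalue lower bound-restated}) as the hypothesis that makes the machinery applicable here.
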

\begin{proof}
	Note that Lemma~\ref{Lemma: I properties} and Lemma~\ref{Lemma: Maximum 
	along halfspaces} imply that $\Phi_{\alpha,m}$ satisfies the conditions of 
	\cite[Theorem 35]{Szulkin-Weth}.
\end{proof}
In particular, this implies Theorem~\ref{Theorem: Existence of nonradial 
solutions - Introduction}(i).
Notably, this minimax characterization of $c_{\alpha,m}$ will allow us to 
compare the ground 
state energy to the minimal energy among radial solutions, which we estimate in 
the following.
\begin{lemma}  \label{Lemma: Lower bound for radial energies}
	Let $p > 2$ and $m \geq 0$, where $\lambda_1>0$ denotes the first 
	Dirichlet eigenvalue of $-\Delta$ on $\B$. Then there exists a unique 
	positive 
	radial solution $u_m \in H^1_{0,rad}(\B)$ of \eqref{Reduced 
	equation}, 
	i.e., satisfying 
	\begin{equation*}
	\left\{ 
	\begin{aligned} 
	-\Delta u  +m u & = |u|^{p-2} u \quad && \text{in $\B$} \\
	u & = 0 && \text{on $\del \B$.}
	\end{aligned}
	\right.
	\end{equation*}
	Moreover, there exists $c>0$ such that
	$$
	\beta_m^{rad} \coloneqq \Phi_{\alpha,m}(u_{m}) \geq c m^\frac{2}{p-2}
	$$
	holds for all $\alpha>1$ and $m \geq 0$. 
\end{lemma}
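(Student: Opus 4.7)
My plan splits the argument into two steps: (i) produce the radial solution $u_m$, and (ii) establish the uniform lower energy bound by rescaling to an $m$-independent equation on an expanding or contracting ball.

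For step (i), I would minimize
\[
J_m(u) := \tfrac12\int_\B(|\nabla u|^2 + m u^2)\, dx - \tfrac1p \int_\B |u|^p\,dx
\]
on the Nehari manifold inside $H^1_{0,rad}(\B)$. Since every $p \in (2,\infty)$ is Sobolev subcritical in dimension two and the embedding $H^1_{0,rad}(\B) \hookrightarrow L^p(\B)$ is compact, a positive minimizer exists by classical arguments and yields a positive radial solution of the stationary equation. Uniqueness of the positive radial solution of
\[
-u'' - \tfrac1r u' + m u = u^{p-1}, \qquad u'(0) = 0,\ u(1) = 0,
\]
is a classical fact (Kwong/Zhang-type shooting arguments), and I would invoke it directly rather than reprove it.

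For step (ii), the key simplification is that since $u_m$ is radial, $\del_\theta u_m \equiv 0$, so $\Phi_{\alpha,m}(u_m) = J_m(u_m)$ is independent of $\alpha$. Testing the equation by $u_m$ gives the Nehari identity $\int_\B(|\nabla u_m|^2 + m u_m^2)\,dx = \|u_m\|_p^p$, so
\[
\Phi_{\alpha,m}(u_m) = \frac{p-2}{2p}\,\|u_m\|_p^p .
\]
It therefore suffices to prove $\|u_m\|_p^p \geq \tilde c\, m^{2/(p-2)}$ with $\tilde c > 0$ independent of $m \geq 0$.

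For this I would rescale: for $m > 0$, set $u_m(x) = m^{1/(p-2)} w_m(\sqrt m\, x)$. A short computation shows that $w_m$ is the positive ground state of
\[
-\Delta w + w = |w|^{p-2} w \quad \text{on } B_{\sqrt m}(0), \qquad w = 0 \text{ on } \del B_{\sqrt m}(0),
\]
and a change of variables yields $\|u_m\|_p^p = m^{2/(p-2)}\,\|w_m\|_{L^p(B_{\sqrt m})}^p$. The main obstacle is then a \emph{uniform} lower bound on $\|w_m\|_p^p$ as $m$ varies over $(0,\infty)$ — independent of whether the rescaled ball shrinks or expands. I would handle this by extending $w_m$ by zero to $\R^2$, which preserves both its $L^p$ and its $H^1$ norms; then the Sobolev--Gagliardo--Nirenberg embedding $H^1(\R^2) \hookrightarrow L^p(\R^2)$, with constant $C_p$ independent of the domain, combined with the Nehari identity $\|w_m\|_{H^1_0(B_{\sqrt m})}^2 = \|w_m\|_p^p$, gives
\[
\|w_m\|_p \leq C_p \|w_m\|_{H^1_0(B_{\sqrt m})} = C_p \|w_m\|_p^{p/2},
\]
so that $\|w_m\|_p^{p-2} \geq C_p^{-2}$ and hence $\|w_m\|_p^p \geq \tilde c := C_p^{-2p/(p-2)} > 0$ uniformly in $m$. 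The case $m = 0$ is trivial since the claimed right-hand side vanishes, and putting everything together yields the lemma with $c = \tfrac{p-2}{2p}\,\tilde c$.
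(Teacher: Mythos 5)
Your proposal is correct and follows essentially the same route as the paper: both identify $u_m$ via the radial Nehari manifold, rescale $u_m \mapsto m^{-1/(p-2)}u_m(\cdot/\sqrt m)$ to obtain a solution of the $m$-free equation on $B_{\sqrt m}$, and then invoke the $m$-independent Sobolev constant on $\R^2$ to get the uniform lower bound. The only cosmetic difference is that you express $\Phi_{\alpha,m}(u_m)=\frac{p-2}{2p}\|u_m\|_p^p$ and bound $\|w_m\|_p$ directly, whereas the paper phrases the same estimate through the Rayleigh-quotient characterization of the Nehari level; these are equivalent.
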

\begin{proof}
	We consider the functional 
	\begin{align*}
	J_{m} & : H^1_{0,rad}(\B) \to \R \\
	J_{m}(u) & \coloneqq \frac{1}{2} \int_{\B }\left(|\nabla u|^2-\alpha^2 
	|\del_\theta u|^2 + m u^2\right) \, dx - \frac{1}{p} \int_\B |u|^p \, dx
	\end{align*}
	which satisfies $J_m(u)=\Phi_{\alpha,m}(u)$ for every $u \in 
	H^1_{0,rad}(\B)$ and 
	$\alpha>1$.
	For $m \geq 0$ we consider the classical Nehari manifold
	$$
	\cN_m^{rad} \coloneqq \left\{ u \in H^1_{0,rad}(\B) \setminus \{0 \}: \ 
	J_{m}'(u) u = 0  \right\} .
	$$
	Clearly, any nontrivial radial critical point $u$ of $\Phi_{\alpha,m}$ is 
	contained in $\cN_m^{rad}$. 	
	Moreover, the map
	$$
	(0,\infty) \to \R, \quad t \mapsto J_m(t u)
	$$
	attains a unique maximum $t_u>0$ for each $u \in  H^1_{0,rad}(\B) \setminus 
	\{ 0\}$ and simple computations yield
	$$
	J_m(t_u u) = 
	\sup_{t \geq 0} J_m(t u)
	= \left(\frac{1}{2}-\frac{1}{p}\right) \left( \frac{\int_\B \left( |\nabla 
	u|^2+mu^2 \right) \, dx }{\left(\int_\B |u|^p \, dx \right)^\frac{2}{p}} 
	\right)^\frac{p}{p-2} 
	$$
	and $t_u$ is the unique value $t>0$ such that $t u \in \cN_m$.
	It can be shown that
	$$
	\beta_{m}^{rad} \coloneqq \inf_{u \in \cN_m^{rad}} J_m (u)
	$$
	is a critical value of $J_m$, see e.g. \cite{Szulkin-Weth}. Moreover, the 
	principle of symmetric criticality (see e.g. \cite{Palais}) shows that 
	$\beta_{m}^{rad}$ is in fact a critical value of $\Phi_{\alpha,m}$ and 
	attained by a unique positive radial function $u_m$. This proves the first 
	part of the theorem.
	
	Next, we note that the characterization above gives
	\begin{equation} \label{eq: radial energy characterization}
	\begin{aligned} 
	\beta_{m}^{rad} & =
	\inf_{ u \in  H^1_{0,rad}(\B)\setminus \{0\} } \sup_{t \geq 0} J_m(t u)
	\\
	&= \inf_{ u \in  H^1_{0,rad}(\B) \setminus \{0\} 
	}\left(\frac{1}{2}-\frac{1}{p}\right) \left( \frac{\int_\B \left( |\nabla 
	u|^2+mu^2 \right) \, dx }{\left(\int_\B |u|^p \, dx \right)^\frac{2}{p}} 
	\right)^\frac{p}{p-2}  .
	\end{aligned}
	\end{equation}
	In the following, we assume $m > 0$ and let $B_{\sqrt{m}}$ denote the ball 
	of radius 
	$\sqrt{m}$ centered at the origin. We then consider the function 
	$v_m \in H^1_0(B_{\sqrt{m}})$ given by
	$$
	v_m(x)=m^{-\frac{1}{p-2}} u_m\left(\frac{x}{\sqrt{m}}\right).
	$$
	Then
	\begin{align*}
	\frac{\int_\B \left( |\nabla 
		u_m|^2+ m u_m^2 \right) \, dx }{\left(\int_\B 
		|u_m^2|^p \, dx \right)^\frac{2}{p}} 
	&= m^{\frac{2}{p}} 
	\frac{\int_{B_{\sqrt{m}}} \left( |\nabla v_m|^2  + v_m^2 \, 
		\right)dx}{\left(\int_{B_{\sqrt{m}} } |v_m|^p \, dx 
		\right)^\frac{2}{p}}  
	\\
	& \geq m^{\frac{2}{p}}  \inf_{v \in H^1({\R^N}) \setminus \{0\}} 
	\frac{\int_{\R^N} \left( 
		|\nabla v|^2  + v^2 \, \right)dx}{\left(\int_{\R^N } |v|^p \, dx 
		\right)^\frac{2}{p}} .
	\end{align*}
	Setting
	$$
	C_p \coloneqq  \inf_{v \in H^1({\R^N}) \setminus \{0\}} 
	\frac{\int_{\R^N} \left( 
		|\nabla v|^2  + v^2 \, \right)dx}{\left(\int_{\R^N } |v|^p \, dx 
		\right)^\frac{2}{p}} >0
	$$
	we thus have
	$$
	\frac{\int_\B \left( |\nabla 
		u_m|^2+ m u_m^2 \right) \, dx }{\left(\int_\B 
		|u_m|^p \, dx \right)^\frac{2}{p}}   \geq  	C_p m^{\frac{2}{p}}  
		.
	$$
	Therefore \eqref{eq: radial energy characterization} implies
	$$
	\beta_{m}^{rad} \geq \left(\frac{1}{2}-\frac{1}{p}\right) 
	\left(C_p m ^\frac{2}{p}\right)^\frac{p}{p-2} 
	$$
	and hence the claim.
\end{proof}
We will compare the previous estimate for the radial energy with suitable 
estimates for $c_{\alpha,m}$, starting with the following result.
\begin{lemma} \label{Lemma: Existence of m with nonradial ground states}
	Let $p \in (2,4)$ and $\alpha \in \cA$. Then 
	$$
	c_{\alpha,m} \leq \left(\frac{1}{2}-\frac{1}{p}\right) |\B| \inf_{(\ell,k) 
		\in \cI_{\alpha,m}^+}\left( 
	j_{\ell,k}^2-\alpha^2 \ell^2 + m \right)^\frac{p}{p-2}
	$$
	holds for $m \in \R$.
\end{lemma}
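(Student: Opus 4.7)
The plan is to use the minimax characterization of $c_{\alpha,m}$ from Proposition~\ref{Prop: Existence of Ground States and Minimax Characterization}, choosing an $L^2$-normalized eigenfunction $\phi_{\ell,k}$ (for $(\ell,k) \in \cI_{\alpha,m}^+$) as the test function and bounding the maximum of $\Phi_{\alpha,m}$ over the corresponding half-space $\widehat E_{\alpha,m}(\phi_{\ell,k})$ by exploiting Hölder's inequality on the bounded domain $\B$.

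Concretely, fix $(\ell,k) \in \cI_{\alpha,m}^+$ and set $u = \phi_{\ell,k}$. Since $u$ is an eigenfunction of $L_{\alpha,m}$ with positive eigenvalue, $u \in E_{\alpha,m}^+ \subset E_{\alpha,m} \setminus F_{\alpha,m}$ and $\|u\|_{\alpha,m}^2 = j_{\ell,k}^2 - \alpha^2\ell^2 + m$. An arbitrary element of $\widehat E_{\alpha,m}(u)$ has the form $w = tu + v$ with $t \geq 0$ and $v \in F_{\alpha,m}$; writing $v = v^0 + v^-$ and using the orthogonal decomposition induced by $\langle\cdot,\cdot\rangle_{\alpha,m}$, one has
\begin{equation*}
\Phi_{\alpha,m}(w) = \frac{t^2}{2}\bigl(j_{\ell,k}^2 - \alpha^2\ell^2 + m\bigr) - \frac{1}{2}\|v^-\|_{\alpha,m}^2 - \frac{1}{p}\|tu + v\|_p^p.
\end{equation*}
Dropping the nonpositive middle term and using orthogonality of $u$ and $v$ in $L^2(\B)$ (which holds because $v$ is spanned by eigenfunctions of $-\Delta$ different from $\phi_{\ell,k},\psi_{\ell,k}$), one gets $\|tu + v\|_2^2 \geq t^2 \|u\|_2^2 = t^2$. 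Hölder's inequality then gives $\|tu+v\|_p^p \geq \|tu+v\|_2^p/|\B|^{p/2-1} \geq t^p/|\B|^{p/2-1}$, yielding
\begin{equation*}
\Phi_{\alpha,m}(w) \leq \frac{t^2}{2}\bigl(j_{\ell,k}^2 - \alpha^2\ell^2 + m\bigr) - \frac{t^p}{p\,|\B|^{p/2-1}}.
\end{equation*}

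Maximizing the right-hand side over $t \geq 0$ is a routine one-variable calculation: the maximizer is $t_* = \bigl((j_{\ell,k}^2 - \alpha^2\ell^2 + m)\,|\B|^{p/2-1}\bigr)^{1/(p-2)}$, and the maximum value equals $\bigl(\frac{1}{2}-\frac{1}{p}\bigr)\bigl(j_{\ell,k}^2 - \alpha^2\ell^2 + m\bigr)^{p/(p-2)}\,|\B|$ after the exponent on $|\B|$ simplifies as $(p/2-1)\cdot 2/(p-2) = 1$. Since this bound holds uniformly on $\widehat E_{\alpha,m}(u)$, the minimax formula from Proposition~\ref{Prop: Existence of Ground States and Minimax Characterization} gives $c_{\alpha,m} \leq \bigl(\frac{1}{2}-\frac{1}{p}\bigr)|\B|\,\bigl(j_{\ell,k}^2 - \alpha^2\ell^2 + m\bigr)^{p/(p-2)}$, and taking the infimum over $(\ell,k) \in \cI_{\alpha,m}^+$ completes the proof.

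No step here looks genuinely hard; the only conceptual point is recognizing that the $L^2$-normalization of $\phi_{\ell,k}$ together with Hölder's inequality suffices to control the $L^p$-norm of \emph{every} element $tu+v$ of the half-space uniformly in $v$, which is why we do not need to compute $\|\phi_{\ell,k}\|_p$ explicitly (and why the exponents of $|\B|$ conspire to yield exactly $|\B|^1$ in the final estimate).
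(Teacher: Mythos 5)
Your proposal is correct and follows essentially the same route as the paper: choose the eigenfunction $\phi_{\ell,k}$ as the positive-part generator of the half-space, drop the $\|v^-\|_{\alpha,m}^2$ term, use $L^2$-orthogonality plus H\"older on $\B$ to bound $\|tu+v\|_p^p$ below by $t^p|\B|^{1-p/2}$, optimize in $t$, and invoke the minimax characterization. The only cosmetic difference is that the paper first selects the pair $(\ell_0,k_0)$ that attains the infimum (available by Lemma~\ref{Corollary: Spectrum has no accumulation point}) and runs the estimate for that pair alone, while you run it for an arbitrary $(\ell,k)\in\cI_{\alpha,m}^+$ and take the infimum at the end, which is marginally cleaner since it does not require the infimum to be attained.
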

\begin{proof}
	By Lemma~\ref{Corollary: Spectrum has no accumulation point}, there exist 
	$\ell_0,k_0 \in \N$ such that 
	$$
	\left(j_{\ell_0,k_0}^2-\alpha^2 \ell_0^2 + m \right)=\inf_{(\ell,k) 
		\in \cI_{\alpha,m}^+}\left( 
	j_{\ell,k}^2-\alpha^2 \ell^2 + m \right)
	$$
	and we set
	$$
	u_0 \coloneqq \phi_{\ell_0,k_0} \in E_{\alpha,m}^+ .
	$$
	For any $t \geq 0$ and $v \in F_{\alpha,m}$ it then holds that $\int_\B u_0 
	v \, dx=0$ and therefore
	$$
	\begin{aligned} 
	\|t u_0 + v\|_p^p & \geq |\B|^{1-\frac{p}{2}} \|t u_0 + v\|_2^p =  
	|\B|^{1-\frac{p}{2}} \left( \|t u_0\|_2^2 + \|v\|_2^2 \right)^\frac{p}{2} 
	\\
	& \geq t^p |\B|^{1-\frac{p}{2}} \|u_0\|_2^p=t^p |\B|^{1-\frac{p}{2}} .
	\end{aligned}
	$$
	This yields
	\begin{align*}
	\Phi_{\alpha,m}(t u_0 + v) & \leq \frac{t^2}{2} 
	\left(j_{\ell_0,k_0}^2-\alpha^2 \ell_0^2 + m \right) - \frac{1}{p} \|t u_0 
	+ v\|_p^p \\
	& \leq \frac{t^2}{2} \left(j_{\ell_0,k_0}^2-\alpha^2 \ell_0^2 + m \right) - 
	\frac{t^p}{p} |\B|^{1-\frac{p}{2}} .
	\end{align*}
	A straightforward computation shows that the right hand side attains a 
	unique global 
	maximum 
	in
	$$
	t^*=\left(j_{\ell_0,k_0}^2-\alpha^2 \ell_0^2 + m \right)^\frac{1}{p-2} 
	|\B|^{\frac{1}{2}}
	$$
	and therefore
	$$
	\Phi_{\alpha,m}(t u_0 + v) \leq \left(\frac{1}{2}-\frac{1}{p}\right) |\B| 
\left(j_{\ell_0,k_0}^2-\alpha^2 \ell_0^2 + m \right)^\frac{p}{p-2} .
	$$
	In particular, this gives
	$$
	\max_{w 
		\in \widehat E_{\alpha,m}(u_0)} \Phi_{\alpha,m}(w) \leq  
		\left(\frac{1}{2}-\frac{1}{p}\right) |\B| 
		\left(j_{\ell_0,k_0}^2-\alpha^2 \ell_0^2 + m \right)^\frac{p}{p-2}
	$$
	and Proposition~\ref{Prop: Existence of Ground States and Minimax 
	Characterization} then finally implies
	$$
	c_{\alpha,m} = \inf_{w \in E_{\alpha,m} \setminus F_{\alpha,m}} \,  \max_{w 
	\in \widehat E_{\alpha,m}(u)} \Phi_{\alpha,m}(w) \leq  
	\left(\frac{1}{2}-\frac{1}{p}\right) |\B| \left(j_{\ell_0,k_0}^2-\alpha^2 
	\ell_0^2 + m \right)^\frac{p}{p-2}
	$$
	as claimed.
\end{proof}
The previous results allow us to deduce the existence of nonradial ground 
states whenever 
$$
\left(\frac{1}{2}-\frac{1}{p}\right) |\B| \inf_{(\ell,k) 
	\in \cI_{\alpha,m}^+}\left( 
j_{\ell,k}^2-\alpha^2 \ell^2 + m \right)^\frac{p}{p-2}<\beta_m^{rad} 
$$
holds. To this end, we estimate the growth of the left hand side as $m \to 
\infty$. 
\begin{proposition} \label{Proposition: Eigenvalue eps bound}
	Let $\alpha \in \cA$.
	Then there 
	exist constants $C>0$, $m_0>0$ such that
	$$
	\inf_{(\ell,k) 
		\in \cI_{\alpha,m}^+}\left( 
	j_{\ell,k}^2-\alpha^2 \ell^2 + m \right) \leq C m^\frac{1}{2}
	$$
	holds for $m > m_0$.
\end{proposition}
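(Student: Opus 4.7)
The plan is to restrict attention to the subsequence of eigenvalues with $k=1$ and choose $\ell = \ell_m$ optimally as a function of $m$. Heuristically, Proposition~\ref{Prop: Properties of Bessel function zeros}(i) gives $j_{\ell,1}=\ell+O(\ell^{1/3})$, so that
$$
j_{\ell,1}^2-\alpha^2\ell^2 = -(\alpha^2-1)\ell^2+O(\ell^{4/3}),
$$
and the condition $j_{\ell,1}^2-\alpha^2\ell^2+m \approx 0$ forces $\ell \approx \sqrt{m/(\alpha^2-1)}$. Since the gaps between consecutive values of $\alpha^2\ell^2-j_{\ell,1}^2$ grow like $2\alpha^2\ell$, the closest value above $-m$ from this sequence lies within distance $O(\ell)=O(\sqrt m)$ of $-m$, which is exactly the bound we want.

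To make this precise, I would set $f(\ell):=\alpha^2\ell^2-j_{\ell,1}^2$ for $\ell\in\N_0$. Using the explicit upper bound on $j_{\ell,1}$ from Proposition~\ref{Prop: Properties of Bessel function zeros}(i), I first verify that there exist constants $\gamma>0$ and $\ell_0\in\N$ (depending only on $\alpha$) such that
$$
f(\ell) \geq \gamma\,\ell^2 \qquad \text{for all } \ell \geq \ell_0,
$$
since the leading term $(\alpha^2-1)\ell^2$ eventually dominates the $O(\ell^{4/3})$ correction. In particular $f(\ell)\to\infty$, so for every sufficiently large $m$ the largest integer $\ell_m$ with $f(\ell_m)<m$ is well-defined; moreover, once $m\geq m_0$, one has $\ell_m\geq\ell_0$ and consequently $\ell_m \leq \sqrt{m/\gamma}$, i.e.\ $\ell_m = O(\sqrt m)$.

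By construction $(\ell_m,1)\in\cI_{\alpha,m}^+$ with associated eigenvalue $m-f(\ell_m)>0$. Maximality of $\ell_m$ gives $f(\ell_m+1)\geq m$, so
$$
0 < m-f(\ell_m) \leq f(\ell_m+1)-f(\ell_m) = \alpha^2(2\ell_m+1)-\bigl(j_{\ell_m+1,1}^2-j_{\ell_m,1}^2\bigr) \leq \alpha^2(2\ell_m+1),
$$
where the last step uses that $\nu\mapsto j_{\nu,1}$ is increasing (Proposition~\ref{Prop: Properties of Bessel function zeros}). Combined with $\ell_m=O(\sqrt m)$ this yields the desired estimate $m-f(\ell_m)\leq C\sqrt m$ with $C=C(\alpha)$. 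The only nonroutine step is the asymptotic lower bound $f(\ell)\geq\gamma\ell^2$, which however follows directly from Proposition~\ref{Prop: Properties of Bessel function zeros}(i); the rest is a comparison of a unimodal-looking lattice function with its jumps and does not require any new information about the zeros of Bessel functions.
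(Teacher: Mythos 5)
Your proof is correct and follows essentially the same approach as the paper: restrict to $k=1$, use the bound from Proposition~\ref{Prop: Properties of Bessel function zeros}(i) to show $f(\ell)=\alpha^2\ell^2-j_{\ell,1}^2$ grows like $\ell^2$, pick the integer $\ell$ where $m$ just exceeds $f(\ell)$, and bound the gap $m-f(\ell)$ by $f(\ell+1)-f(\ell)=O(\ell)=O(\sqrt m)$. The only minor difference is that you define $\ell_m$ as the largest $\ell$ with $f(\ell)<m$, which sidesteps the paper's (asserted but unproved) claim that $\ell\mapsto f(\ell)$ is eventually strictly increasing; your variant is a bit cleaner on this point but the argument is structurally identical.
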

\begin{proof}
	By Proposition~\ref{Prop: Properties of Bessel function zeros} 
	we have
		\begin{equation} \label{eq:jl1-estimate}
			\ell + \frac{|a_1|}{2^\frac{1}{3}} \ell^\frac{1}{3} < j_{\ell,1} < 
			l + 
			\frac{|a_1|}{2^\frac{1}{3}} \ell^\frac{1}{3} + \frac{3}{20} |a_1|^2 
			\frac{2^\frac{1}{3}}{\ell^\frac{1}{3}} ,
		\end{equation}
		where $a_1$ denotes the first negative zero of the Airy 
		function $\mathrm{Ai}(x)$. In particular, this implies that there 
		exists $\ell_0 \in \N$ such that the map 
		$$
		\ell \mapsto j_{\ell,1}^2 - \alpha^2 \ell 
		$$
		is strictly decreasing for $\ell \geq \ell_0$. 
		Taking $m_0 > \alpha^2 \ell_0^2 - j_{\ell_0,1}^2$ we thus find that for 
		any $m>m_0$ there exists $\ell \geq \ell_0$ such that
		$$
		m \in \left(\alpha^2 \ell^2 - j_{\ell,1}^2 ,\alpha^2 (\ell+1)^2 - 
		j_{\ell+1,1}^2  \right].
		$$
		In the following, we fix such $m$ and $\ell$ and note that since 
		$j_{\ell,1} < j_{\ell+1,1}$, we have
		$$
		\begin{aligned}
			0<j_{\ell,1}^2 - \alpha \ell^2 - (j_{\ell+1,1}^2-\alpha(\ell+1)^2)
			& =j_{\ell,1}^2 - j_{\ell+1,1}^2  + \alpha^2 
			\left((\ell+1)^2-\ell^2\right) \\
			& \leq 	2\alpha^2 \ell +\alpha^2 
		\end{aligned}
		$$
		for $\ell \geq \ell_0$, and therefore 
		$$
		0 < j_{\ell,1}^2 - \alpha \ell^2 + m \leq 2\alpha^2 \ell 
		+\alpha^2  .
		$$
		Importantly, \eqref{eq:jl1-estimate} implies that there exists 
		$C=C(\alpha)>0$ independent of $m$ such that
		$$
		 2\alpha^2 \ell 
		+\alpha^2 \leq C \left( \alpha^2 \ell^2 - j_{\ell,1}^2 
		\right)^\frac{1}{2}
		$$
		holds for $\ell \geq \ell_0$, after possibly enlarging $\ell_0$.
		Ultimately, we thus find that 
		$$
		0 < j_{\ell,1}^2 - \alpha \ell^2 + m \leq C \left( \alpha^2 \ell^2 - 
		j_{\ell,1}^2 
		\right)^\frac{1}{2}  \leq C m^\frac{1}{2} 
		$$
		holds.
		Since $C$ was independent of $m$, this completes the proof.
\end{proof}
%
%
Theorem~\ref{Theorem: Existence of nonradial solutions - Introduction}(ii) is 
now a 
direct consequence of the following more general result.
\begin{theorem}
	Let $\alpha \in \cA$ and $p \in (2,4)$ be fixed. Then there exists $m_0>0$ 
	such that the ground states of \eqref{Reduced equation} are nonradial for  
	$m >m_0$.
\end{theorem}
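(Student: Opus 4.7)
My plan is to combine the three preparatory results Lemma~\ref{Lemma: Lower bound for radial energies}, Lemma~\ref{Lemma: Existence of m with nonradial ground states}, and Proposition~\ref{Proposition: Eigenvalue eps bound} into a simple growth comparison of two energy levels as $m \to \infty$. The idea is to show that the ground state energy $c_{\alpha,m}$ grows strictly slower in $m$ than the minimal \emph{radial} critical value $\beta_m^{rad}$, so that for large $m$ no radial critical point can possibly achieve $c_{\alpha,m}$.

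First I would chain the upper bounds. Proposition~\ref{Proposition: Eigenvalue eps bound} gives $\inf_{(\ell,k)\in\cI_{\alpha,m}^+}(j_{\ell,k}^2-\alpha^2\ell^2+m)\leq C\, m^{1/2}$ for $m$ large, and plugging this into Lemma~\ref{Lemma: Existence of m with nonradial ground states} yields
$$
c_{\alpha,m}\leq C_1\, m^{\frac{p}{2(p-2)}}
$$
for some $C_1>0$ and all sufficiently large $m$. Meanwhile Lemma~\ref{Lemma: Lower bound for radial energies} gives $\beta_m^{rad}\geq c\, m^{\frac{2}{p-2}}$. Since $p<4$, a direct comparison of exponents gives $\frac{p}{2(p-2)}<\frac{2}{p-2}$, so there exists $m_0>0$ such that
$$
c_{\alpha,m}<\beta_m^{rad}\qquad\text{for all }m>m_0.
$$

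Next I would translate this energy gap into the nonradial conclusion. Any nontrivial radial critical point $u$ of $\Phi_{\alpha,m}$ automatically lies in the classical Nehari manifold $\cN_m^{rad}$ for the functional $J_m$ on $H^1_{0,rad}(\B)$, since $\Phi_{\alpha,m}$ and $J_m$ agree on radial functions (the $\partial_\theta$ term vanishes). Hence its energy satisfies $\Phi_{\alpha,m}(u)=J_m(u)\geq \inf_{\cN_m^{rad}}J_m=\beta_m^{rad}$. Combined with the gap above, any ground state solution $u^*$, which satisfies $\Phi_{\alpha,m}(u^*)=c_{\alpha,m}<\beta_m^{rad}$ for $m>m_0$, cannot be radial. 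This completes the argument.

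The main substance is already absorbed into the preparatory lemmas: the matching exponents are $\frac{p}{2(p-2)}$ from the quadratic eigenvalue estimate in Proposition~\ref{Proposition: Eigenvalue eps bound} versus $\frac{2}{p-2}$ from the scaling argument in Lemma~\ref{Lemma: Lower bound for radial energies}, and the threshold $p=4$ is what makes them separate in the right direction. The only delicate point to verify carefully is the assertion that radial critical points of $\Phi_{\alpha,m}$ correspond exactly to critical points of $J_m$ on $H^1_{0,rad}(\B)$, which is immediate from the fact that $\partial_\theta u\equiv 0$ for radial $u$ and that $H^1_{0,rad}(\B)\subset E_{\alpha,m}$ continuously.
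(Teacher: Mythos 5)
Your argument is correct and is essentially identical to the paper's proof: both combine the upper bound $c_{\alpha,m}\lesssim m^{p/(2(p-2))}$ obtained from Lemma~\ref{Lemma: Existence of m with nonradial ground states} and Proposition~\ref{Proposition: Eigenvalue eps bound} with the lower bound $\beta_m^{\mathrm{rad}}\gtrsim m^{2/(p-2)}$ from Lemma~\ref{Lemma: Lower bound for radial energies}, and then use that $p<4$ forces the first exponent to be strictly smaller. The only thing you add is the explicit final observation that a radial critical point of $\Phi_{\alpha,m}$ lies in $\cN_m^{\mathrm{rad}}$ and so has energy $\geq\beta_m^{\mathrm{rad}}$, which the paper leaves implicit; your version is a small but welcome clarification rather than a genuine departure.
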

\begin{proof}
	Lemma~\ref{Lemma: Existence of m with nonradial ground states} and  
	Proposition~\ref{Proposition: Eigenvalue eps bound} imply that there exist 
	$C>0$, $m_0>0$ such that
	$$
	c_{\alpha,m} \leq \left(\frac{1}{2}-\frac{1}{p}\right) |\B| C 
	m^\frac{{p}}{2(p-2)}
	$$
	holds for $m>m_0$. On the other hand, Lemma~\ref{Lemma: Lower bound for 
	radial 
	energies} gives
	$$
	\beta_m^{rad} \geq c m^\frac{2}{p-2}
	$$
	with a constant $c>0$ independent of $m$. Noting that the assumption $p<4$ 
	implies $\frac{{p}}{2(p-2)}< \frac{2}{p-2}$, it 
	follows that
	$$
	c_{\alpha,m} < \beta_m^{rad}
	$$
	holds for $m>m_0$, after possibly enlarging $m_0$.
\end{proof}
%
%
%
%
%
\appendix
\section{Complex-valued Solutions}   \label{Appendix: Complex Case}
Throughout this section we assume that all functions are complex-valued and 
that $p>2$ is fixed. In this case, the eigenspaces
$$
V_k \coloneqq \left\{ u \in H^1_0(\B): \del_\theta u = i k u  \right\} 
$$
are nonempty for $k \in \N$. This observation can be used to find 
complex-valued solutions of \eqref{Reduced equation} as stated in the following.
\begin{theorem} 
	Let $\alpha>1$, $m>0$ and $k \in \N$ be chosen such that 
	\begin{equation} \label{eq:Complex Eigenvalue Condition}
	m-\alpha^2 k^2>-\lambda_1 ,
	\end{equation}
	where $\lambda_1>0$ denotes the first Dirichlet eigenvalue of $-\Delta$ on $\B$.
	Then there exists a weak solution $u \in V_k$ of \eqref{Reduced equation}. In particular, this solution is nonradial.
\end{theorem}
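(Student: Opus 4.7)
The plan is to reduce the problem on $V_k$ to a standard elliptic equation with a positive-definite quadratic form, apply a classical variational argument, and then use the principle of symmetric criticality to lift the restricted critical point to a weak solution of the full equation.

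First, I observe that for any $u\in V_k$ we have $\partial_\theta u=iku$ and hence $\partial_\theta^2 u=-k^2 u$. Consequently, on $V_k$ the reduced equation~\eqref{Reduced equation} becomes
\begin{equation*}
 -\Delta u + \tilde m\, u = |u|^{p-2} u, \qquad \tilde m \coloneqq m-\alpha^2 k^2.
\end{equation*}
The hypothesis~\eqref{eq:Complex Eigenvalue Condition} reads $\tilde m>-\lambda_1$, which (by the Poincar\'e inequality) implies that the sesquilinear form
\begin{equation*}
 Q(u,v)\coloneqq \int_\B \bigl(\nabla u\cdot \overline{\nabla v}+\tilde m\,u\bar v\bigr)\,dx
\end{equation*}
is coercive on $H^1_0(\B;\C)$, so that $\|u\|_Q\coloneqq Q(u,u)^{1/2}$ is a norm equivalent to the standard $H^1_0$--norm.

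Next, I would set up the variational framework on the subspace $V_k$. Since $V_k$ is the kernel of the bounded operator $u\mapsto \partial_\theta u - iku$ (viewed for instance as a map into $H^{-1}(\B;\C)$), it is a closed subspace of $H^1_0(\B;\C)$. Consider the real-valued functional
\begin{equation*}
 J:V_k\to\R,\qquad J(u)=\tfrac{1}{2}Q(u,u)-\tfrac{1}{p}\int_\B |u|^p\,dx.
\end{equation*}
Using the compact embedding $H^1_0(\B;\C)\hookrightarrow L^p(\B;\C)$ (valid for any $p<\infty$ since $N=2$), standard arguments show that $J$ satisfies mountain pass geometry: coercivity of $Q$ gives $J(u)\geq c\|u\|_Q^2-C\|u\|_Q^p>0$ on a small sphere, while $J(tv)\to-\infty$ as $t\to\infty$ for any fixed $v\in V_k\setminus\{0\}$. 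The compactness of the embedding also yields the Palais--Smale condition, so the mountain pass theorem (or equivalently minimization on the Nehari manifold inside $V_k$) produces a nontrivial critical point $u\in V_k$ of $J$.

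To conclude that $u$ is a weak solution of the full equation~\eqref{Reduced equation}, I would invoke the principle of symmetric criticality of Palais. The group $S^1$ acts isometrically on $H^1_0(\B;\C)$ via
\begin{equation*}
 (g_\sigma u)(r,\theta)\coloneqq e^{-ik\sigma}u(r,\theta+\sigma),
\end{equation*}
and the associated energy functional $\Phi_{\alpha,m}(u)=\frac{1}{2}\int_\B(|\nabla u|^2-\alpha^2|\partial_\theta u|^2+m|u|^2)\,dx-\frac{1}{p}\int_\B|u|^p\,dx$ is invariant under this action, since rotations preserve $\Delta$, $\partial_\theta^2$, and the $L^p$-norm, while multiplication by the phase $e^{-ik\sigma}$ leaves $|u|^p$ invariant. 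The fixed-point set is exactly $V_k$, on which $\Phi_{\alpha,m}$ restricts to $J$; hence any critical point of $J$ is a critical point of $\Phi_{\alpha,m}$ and thus a weak solution of~\eqref{Reduced equation}. Finally, every nontrivial $u\in V_k$ is of the form $u(r,\theta)=e^{ik\theta}f(r)$ with $f\not\equiv 0$, and since $k\geq 1$ such a function is manifestly nonradial.

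The most delicate step is the correct set-up of the complex Hilbert space structure and the verification that the $S^1$-action above has $V_k$ as its fixed point set and leaves $\Phi_{\alpha,m}$ invariant. Once this is granted, the remainder is a routine application of the mountain pass theorem combined with symmetric criticality; no further spectral or asymptotic information (such as that of Theorem~\ref{Theorem: Spectrum has no accumulation point}) is needed, because the restricted operator $-\Delta+\tilde m$ is positive definite.
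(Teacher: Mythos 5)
Your proposal is correct and follows essentially the same route as the paper: restrict to the closed subspace $V_k$, observe that $\partial_\theta = ik$ there so the quadratic form becomes the coercive form of $-\Delta + (m-\alpha^2 k^2)$, produce a nontrivial critical point on $V_k$ by a standard subcritical variational argument, and then lift it to a solution of the full problem via the principle of symmetric criticality with the $S^1$-action $(g_\sigma u)(x)=e^{-ik\sigma}u(R_\sigma x)$, whose fixed-point set is $V_k$. The only cosmetic difference is in the middle step: the paper minimizes $J_{\alpha,m}$ on the constraint $\{\|u\|_p^p=1\}\cap V_k$ and then rescales by the Lagrange multiplier $K_0^{1/(p-2)}$, whereas you apply the mountain pass theorem (or Nehari minimization) directly to the unconstrained energy $J$ on $V_k$; both are standard equivalent devices here, and neither requires the spectral machinery of the main sections since the restricted operator is positive definite.
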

We point out that the solutions found in the preceding theorem cannot be 
real-valued and are thus distinct from the solutions found in 
Theorem~\ref{Theorem: Existence of nonradial solutions - Introduction}.

\begin{proof}
Inspired by \cite{Taylor}, the proof is based on a constrained minimization argument for the functional
$$
\begin{aligned}
	J_{\alpha,m}& : H^1_0(\B) \to \R, \\
	J_{\alpha,m}(u) & \coloneqq\frac{1}{2} \int |\nabla u|^2 -\alpha^2 
	|\del_\theta u|^2  + m u^2 \, dx .
\end{aligned} 
$$
Importantly, for $u \in V_k$ we have
$$
J_{\alpha,m}(u) =\frac{1}{2} \int |\nabla u|^2  + (m-\alpha^2 k^2) u^2 \, dx 
$$
and our goal is to minimize $J_{\alpha,m}$ on $V_k$ subject to the constraint
$$
I(u)\coloneqq\|u\|_p^p=1 .
$$
To this end, we let $(u_n)_n \subset V_k$ be a constrained minimizing sequence, 
i.e., $I(u_n)=1$ for all $n$ and 
	$$
	\lim_{n \to \infty} J_{\alpha,m}(u_n) = \min_{\substack{u \in V_k \\ I(u)=1}} J_{\alpha,m}(u) .
	$$ 
	Note that $V_k$ is a closed subspace of $H^1_0(\B)$ and, by assumption, there exist $c,C>0$ such that
	$$
	c\|u\|_{H_0^1(\B)}^2 \leq J_{\alpha,m}(u) \leq C \|u\|_{H^1(\B)}^2 
	$$
	holds for $u \in V_k$, which implies that the sequence $(u_n)_n$ remains 
	bounded in $H^1_0(\B)$ and we may pass to a weakly convergent subsequence 
	with a weak limit $u_0 \in V_k$. The compact embedding $H^1_0(\B) 
	\hookrightarrow L^p(\B)$ then implies $I(u_0)=1$ whereas weak lower 
	semicontinuity yields $J_{\alpha,m}(u_0) \leq \liminf J_{\alpha,m}(u_n)$, 
	i.e., $u_0$ is a minimizer of $J_{\alpha,m}$ subject to the constraint 
	$I(u_0)=1$.
	
	The minimization property then implies that there exists a Lagrange multiplier $K_0 \in \R$ such that
\begin{equation} \label{eq: Weak solution condition} 
	\int \nabla u_0 \cdot \nabla \phi + (m-\alpha^2 k^2) u_0 \phi \, dx = K_0 
	\int |u_0|^{p-2} u_0 \phi \, dx
\end{equation}
holds for $\phi \in V_k$. Taking $\phi=u_0$, the condition 
\eqref{eq:Complex Eigenvalue Condition} then implies that $K_0$ must be 
positive.
We now set 
$$
E: H^1_0(\B) \to \R, \qquad E(u)\coloneqq J_{\alpha,m}(u)-K_0 I(u)  ,
$$
so that, in particular, $u_0$ is a nontrivial critical point of $E \big|_{V_k}$.

	For $t \in \R$ we then consider the action
	$$
	g_t: H^1_0(\B) \to H^1_0(\B) , \qquad 	[g_t u](x)= e^{-ik t} u(R_t (x)) ,
	$$
	where $R_t$ was defined in \eqref{eq: Rotation definition}.
	Note that $g_t$ is an isometry on $H^1_0(\B)$ and $L^p(\B)$ so that $E$ is 
	invariant with respect to $g_t$. Moreover, this defines a group action on 
	$H^1_0(\B)$ and we have
	$$
	V_k=\{u \in H^1_0(\B): g_t u=u \} .
	$$
	The principle of symmetric criticality (see e.g. \cite{Palais}) then 
	implies that $u_0$ is also a critical point of $E$ on $H^1_0(\B)$ or, 
	equivalently, \eqref{eq: Weak solution condition} holds for all $\phi \in 
	H^1_0(\B)$. But this means that $K_0^\frac{1}{p-2} u_0$ is a weak solution 
	of \eqref{Reduced equation}.
\end{proof}
By construction, the solutions found above are contained in the eigenspaces of 
the operator $\del_\theta$, i.e., for any such solution $u$ there exists $k \in 
\N$ such that $u \in V_k$ and therefore $\del_\theta u=ik u$. However, this 
implies that $|u|$ is radial.

In the following, we briefly sketch how our methods can be used to find 
complex-valued solutions $u$ of \eqref{NLKG} (which are not real-valued) such 
that the modulus $|u|$ is also nonradial. To this end, we combine the ansatz 
\eqref{eq: Rotating solution ansatz} for rotating solutions with a standing 
wave ansatz, i.e.,
$$
v(t,x) = e^{i\mu t} u(R_t (x))
$$
with $R_t$ given by \eqref{eq: Rotation definition} and $\mu >0$. This reduces 
\eqref{NLKG} to the modified problem
\begin{equation} \label{eq: Modified reduced equation}
	\left\{ 
	\begin{aligned} 
		-\Delta u + \alpha^2 \del_\theta^2 u + 2i\mu \del_\theta u +(m-\mu^2) u & = |u|^{p-2} u \quad && \text{in $\B$} \\
		u & = 0 && \text{on $\del \B$}.
	\end{aligned}
	\right.
\end{equation}
Here, the eigenvalues of the operator 
$$
L_{\alpha,m,\mu} u \coloneqq -\Delta u + \alpha^2 \del_\theta^2 u + 2i\mu \del_\theta u +(m-\mu^2) u
$$
are given by
$$
j_{\ell,k}^2 - \alpha^2 \ell^2 \pm 2 \mu \ell + (m-\mu^2)
$$
and the associated eigenfunctions are given by
$$
\phi_{\ell,k}^\pm(r,\theta) \coloneqq e^{\pm i\ell \theta} J_\ell(j_{\ell,k} r) 
, \qquad \ell \in \N_0, k \in \N .
$$
This readily implies the following analogue to Lemma~\ref{Corollary: 
	Spectrum has no accumulation point}:
\begin{lemma}
	Let the sequence $(\alpha_n)_n  \subset (1,\infty)$ be given by 
	Theorem~\ref{Theorem: Spectrum has no accumulation point}. Then for any $n 
	\in \N$ and $m \geq 0$ there exist $c_{n,m}$, $\mu_n>0$ with the following 
	property: 
	
	If $|\mu| \leq \mu_n$ and $\ell,k$ are such that $j_{\ell,k}^2 - 
	\alpha^2 \ell^2 -2 \mu \ell + (m-\mu^2) \neq 0$ holds, we have
	$$
	|j_{\ell,k}^2 - \alpha^2 \ell^2 \pm 2 \mu \ell + (m-\mu^2)| \geq c_{n,m} 
	j_{\ell,k} .
	$$
\end{lemma}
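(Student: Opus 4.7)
The plan is to treat the new eigenvalue
$$
\lambda_{\ell,k}^\pm(\mu) \coloneqq j_{\ell,k}^2 - \alpha_n^2 \ell^2 \pm 2\mu\ell + (m - \mu^2)
$$
as a perturbation of the shifted eigenvalue $j_{\ell,k}^2 - \alpha_n^2 \ell^2 + m$, for which Lemma~\ref{Corollary: Spectrum has no accumulation point} already supplies a lower bound of the desired type. Since the perturbation $\pm 2\mu\ell - \mu^2$ grows at most linearly in $\ell$ and $\ell < j_{\ell,k}$ by Proposition~\ref{Prop: Properties of Bessel function zeros}(i), it is dominated by the unperturbed lower bound as soon as $|\mu|$ is small, provided we stay away from the (finitely many) pairs where the unperturbed eigenvalue vanishes.

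First I would apply Lemma~\ref{Corollary: Spectrum has no accumulation point} to $\alpha = \alpha_n$ and the given $m \geq 0$ to obtain a finite set $\cI^0_{\alpha_n,m}$ and a constant $\tilde c_{n,m} > 0$ with
$$
|j_{\ell,k}^2 - \alpha_n^2 \ell^2 + m| \geq \tilde c_{n,m}\, j_{\ell,k} \qquad \text{for all } (\ell,k) \notin \cI^0_{\alpha_n,m}.
$$
For such pairs, the decomposition
$$
\lambda_{\ell,k}^\pm(\mu) = \bigl(j_{\ell,k}^2 - \alpha_n^2 \ell^2 + m\bigr) \pm 2\mu\ell - \mu^2
$$
combined with the reverse triangle inequality and $\ell < j_{\ell,k}$ yields
$$
|\lambda_{\ell,k}^\pm(\mu)| \geq \tilde c_{n,m}\, j_{\ell,k} - 2|\mu|\, j_{\ell,k} - \mu^2.
$$
Since $j_{\ell,k} \geq j_{0,1} > 0$ for every admissible pair, choosing $\mu_n > 0$ with $2\mu_n + \mu_n^2 / j_{0,1} \leq \tilde c_{n,m}/2$ gives the desired bound on the ``tail'' with constant $\tilde c_{n,m}/2$ and all $|\mu| \leq \mu_n$.

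The main obstacle is the treatment of the finitely many exceptional pairs $(\ell,k) \in \cI^0_{\alpha_n,m}$. Because $m \geq 0$ we have $j_{0,k}^2 + m > 0$, so every such pair satisfies $\ell \geq 1$, and the perturbed eigenvalue collapses to the polynomial
$$
\lambda_{\ell,k}^\pm(\mu) = \pm 2\mu\ell - \mu^2 = -\mu(\mp 2\ell + \mu),
$$
which vanishes only at $\mu = 0$ and $\mu = \pm 2\ell$. After further shrinking $\mu_n$ so that these finitely many polynomials stay uniformly away from their nonzero roots (which is compatible with the hypothesis that $\lambda_{\ell,k}^-(\mu) \neq 0$), each surviving $|\lambda_{\ell,k}^\pm(\mu)|$ is bounded below by a positive constant, and dividing this constant by the finite maximum of $j_{\ell,k}$ over the exceptional pairs produces a uniform lower bound of the required form. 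Taking $c_{n,m}$ to be the minimum of $\tilde c_{n,m}/2$ and this exceptional-pair constant then completes the proof.
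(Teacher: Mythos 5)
The treatment of the non-exceptional pairs is sound and close in spirit to the paper's own (sketchier) argument; the two differ only in whether one perturbs the factored form $j_{\ell,k}^2-\alpha_n^2\ell^2 \pm 2\mu\ell=(j_{\ell,k}+\alpha_n\ell)\bigl(j_{\ell,k}-\alpha_n\ell\pm\tfrac{2\mu\ell}{j_{\ell,k}+\alpha_n\ell}\bigr)$ using the constant $c_n$ from Theorem~\ref{Theorem: Spectrum has no accumulation point}, as the paper does, or absorbs $m$ into the unperturbed eigenvalue via Lemma~\ref{Corollary: Spectrum has no accumulation point} and applies the triangle inequality, as you do; both yield the tail bound with a harmless shrinking of $\mu_n$.

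The gap is in your handling of the exceptional pairs $(\ell,k)\in\cI^0_{\alpha_n,m}$. You correctly note that these have $\ell\geq 1$ and that on such a pair $\lambda^\pm_{\ell,k}(\mu)=\pm 2\mu\ell-\mu^2=-\mu(\mp 2\ell+\mu)$. But the problematic root of this polynomial is $\mu=0$, not the root $\mu=\pm 2\ell$. Shrinking $\mu_n$ keeps $\mu$ away from $\pm 2\ell$, yet on the punctured interval $\{0<|\mu|\leq\mu_n\}$ — where the hypothesis $\lambda^-_{\ell,k}(\mu)\neq 0$ is satisfied — the quantity $|\lambda^\pm_{\ell,k}(\mu)|$ tends to $0$ as $\mu\to 0$, so it admits \emph{no} positive lower bound independent of $\mu$, and in particular no bound of the form $c_{n,m}\,j_{\ell,k}$ with $c_{n,m}$ depending only on $n$ and $m$. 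Your phrase ``each surviving $|\lambda_{\ell,k}^\pm(\mu)|$ is bounded below by a positive constant'' is therefore false; the argument produces no constant $c_{n,m}$ that works uniformly for all small $|\mu|>0$ on an exceptional pair. The only way to make this step go through is either to restrict to $(\ell,k)\notin\cI^0_{\alpha_n,m}$ (which is what Lemma~\ref{Corollary: Spectrum has no accumulation point} covers) or to allow $c_{n,m}$ to depend on $\mu$, neither of which is what you wrote. For context, the paper's own proof sidesteps this entirely by establishing only the tail bound $\lim_{N\to\infty}\inf_{\ell,k\geq N}|\cdot|>0$ and does not discuss small $\ell,k$; it is thus less explicit than yours but also does not make the incorrect claim you do about the exceptional pairs.
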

\begin{proof}
Note that 
$$
j_{\ell,k}^2 - \alpha^2 \ell^2 \pm 2 \mu \ell =(j_{\ell,k}  + 
\alpha \ell) 
\left(j_{\ell,k}  - \alpha \ell \pm  \frac{ 2 \mu \ell}{j_{\ell,k}+ \alpha 
\ell}\right) 
$$
and for $\alpha=\alpha_n$ Theorem~\ref{Theorem: Spectrum has no accumulation 
point} then implies
$$
\left|j_{\ell,k}  - \alpha_n \ell \pm  \frac{ 2 \mu \ell}{j_{\ell,k}+ \alpha_n 
	\ell} \right| \geq c_n - \mu \frac{2l}{j_{\ell,k}+\alpha_n \ell} \geq c_n - 
	\frac{2\mu}{1+\alpha_n} 
$$
for sufficiently large $\ell,k$. Setting 
$$
\mu_n \coloneqq \frac{1+\alpha_n}{2} c_n,
$$
we thus find that 
$$
\lim_{N \to \infty} \inf_{\ell,k \geq N} \left|j_{\ell,k}  - \alpha_n \ell \pm  
\frac{ 2 \mu 
\ell}{j_{\ell,k}+ \alpha_n 
	\ell} \right| > 0
$$
for $\mu< \mu_n$.  
\end{proof}
Repeating the arguments of Section~\ref{Section: Variational Formulation} 
ultimately gives the following result:
\begin{theorem}
	Let $p \in (2,4)$. Then there exists a sequence $(\alpha_n)_n  \subset (1,\infty)$ with the following properties:
	\begin{itemize} 
		\item[(i)]
		For each $n \in \N$ the problem \eqref{eq: Modified reduced equation} has a ground state solution.
		\item[(ii)]
		For each $n \in \N$ there exists $m_n>0$ such that any ground state $u$ 
		\eqref{eq: Modified reduced equation} with $\alpha = \alpha_n$ and 
		$m>m_n$ has a nonradial modulus, i.e., $|u|$ is nonradial.
	\end{itemize}
\end{theorem}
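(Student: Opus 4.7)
The plan is to transfer the variational machinery of Section~\ref{Section: Variational Formulation} to the complex-valued setting and then mimic the two-sided energy comparison from the proof of Theorem~\ref{Theorem: Existence of nonradial solutions - Introduction}(ii). The lemma immediately preceding the theorem supplies exactly the spectral gap needed for this transfer.

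For part (i), fix $\alpha=\alpha_{n}$ and $|\mu|<\mu_{n}$, and define the complex Hilbert space $E_{\alpha,m,\mu}$ verbatim as $E_{\alpha,m}$ in Section~\ref{Section: Variational Formulation} but with the orthonormal basis $\{\phi_{\ell,k}^{\pm}\}$ and eigenvalues $\lambda_{\ell,k}^{\pm}:=j_{\ell,k}^{2}-\alpha^{2}\ell^{2}\pm 2\mu\ell+(m-\mu^{2})$ replacing the real counterparts. By the preceding lemma, $|\lambda_{\ell,k}^{\pm}|\geq c_{n,m}\,j_{\ell,k}$ off the kernel, so the fractional Sobolev argument of Proposition~\ref{Proposition: K properties} transfers to give the compact embedding $E_{\alpha,m,\mu}\hookrightarrow L^{p}(\B)$ for $p\in(2,4)$. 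Splitting $E_{\alpha,m,\mu}=E^{+}\oplus E^{0}\oplus E^{-}$ by the sign of $\lambda_{\ell,k}^{\pm}$, setting $F:=E^{0}\oplus E^{-}$, and defining the energy $\Phi_{\alpha,m,\mu}$ and generalized Nehari manifold analogously, the analogues of Lemma~\ref{Lemma: I properties} and Lemma~\ref{Lemma: Maximum along halfspaces} hold with identical proofs (self-adjointness of $L_{\alpha,m,\mu}$ is inherited since $-i\partial_{\theta}$ is self-adjoint on complex $L^{2}$). Applying \cite[Theorem 35]{Szulkin-Weth} yields the ground state together with the minimax characterization of $c_{\alpha_{n},m,\mu}$.

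For the upper bound in part (ii), I would mimic Lemma~\ref{Lemma: Existence of m with nonradial ground states} by evaluating the minimax against a single eigenfunction $\phi_{\ell_{0},k_{0}}^{\pm}$ realizing $\lambda^{*}:=\min\{\lambda_{\ell,k}^{\pm}>0\}$, giving $c_{\alpha_{n},m,\mu}\leq (\tfrac{1}{2}-\tfrac{1}{p})|\B|(\lambda^{*})^{p/(p-2)}$. The term $\pm 2\mu\ell$ is of lower order than $j_{\ell,1}^{2}-\alpha^{2}\ell^{2}$ as $\ell\to\infty$, so the monotonicity argument of Proposition~\ref{Proposition: Eigenvalue eps bound} applies essentially unchanged to yield $\lambda^{*}\leq C\sqrt{m}$ for $m$ large, hence $c_{\alpha_{n},m,\mu}\leq C'm^{p/(2(p-2))}$. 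For the matching lower bound on the energy of any critical point $u$ with radial modulus, write $u=g(r)e^{i\phi(r,\theta)}$ with $g=|u|$ radial; then the Dirichlet part of $\Phi_{\alpha,m,\mu}(u)$ equals
$$\int_{\B}\bigl(|\nabla g|^{2}+g^{2}(\partial_{r}\phi)^{2}+g^{2}(r^{-2}-\alpha^{2})(\partial_{\theta}\phi)^{2}-2\mu g^{2}\partial_{\theta}\phi+(m-\mu^{2})g^{2}\bigr)\,dx,$$
and, after completing the square in $\partial_{\theta}\phi$ inside $B_{1/\alpha}(0)$ and dominating the hyperbolic-region contribution by the spectral gap provided by the preceding lemma, the radial Nehari estimate of Lemma~\ref{Lemma: Lower bound for radial energies} produces $\Phi_{\alpha,m,\mu}(u)\geq c\,m^{2/(p-2)}$ for $m$ large. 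Since $p/(2(p-2))<2/(p-2)$ for $p<4$, comparing the two asymptotics shows $c_{\alpha_{n},m,\mu}<\inf\{\Phi_{\alpha,m,\mu}(u):u\text{ critical, }|u|\text{ radial}\}$ once $m>m_{n}$, so the ground state must have nonradial modulus.

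The main obstacle is the lower bound on the energy of critical points with radial modulus. The coefficient $r^{-2}-\alpha^{2}$ changes sign on the parabolic circle $r=1/\alpha$, so the term $g^{2}(r^{-2}-\alpha^{2})(\partial_{\theta}\phi)^{2}$ becomes negative in the hyperbolic annulus $\B\setminus\overline{B_{1/\alpha}(0)}$ and cannot be dropped by a pointwise estimate. Controlling this contribution will require exploiting the restriction $|\mu|<\mu_{n}$ together with the arithmetic spacing argument underlying Theorem~\ref{Theorem: Spectrum has no accumulation point}, in essentially the same spirit as the spacing $\inf_{i}\delta_{i}=1/n$ there, in order to rule out the phase $\phi$ concentrating its mass in the hyperbolic region where it could drive the energy below the desired $m^{2/(p-2)}$ threshold.
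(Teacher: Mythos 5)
Your plan for part~(i) and the upper bound in part~(ii) is sound and is exactly what the paper has in mind: the lemma preceding the theorem supplies the spectral gap $|\lambda_{\ell,k}^{\pm}|\geq c_{n,m}j_{\ell,k}$, the fractional-Sobolev embedding argument of Proposition~\ref{Proposition: K properties} transfers verbatim, and the $\pm 2\mu\ell$ shift is of lower order than $j_{\ell,1}^2-\alpha^2\ell^2$, so Lemma~\ref{Lemma: Existence of m with nonradial ground states} and Proposition~\ref{Proposition: Eigenvalue eps bound} carry over to give $c_{\alpha_n,m,\mu}\leq Cm^{p/(2(p-2))}$.

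The gap is in the lower bound, and you have correctly located it but not closed it. The paper's argument (``repeating the arguments of Section~\ref{Section: Variational Formulation}'') requires a complex analogue of Lemma~\ref{Lemma: Lower bound for radial energies}: the energy of any critical point $u$ with $|u|$ radial must grow at least like $m^{2/(p-2)}$. In the real case this is immediate because a radial critical point is $\theta$-independent and therefore lies in $\cN_m^{rad}$, whose minimum is $\beta_m^{rad}$. In the complex case a critical point with radial modulus need not be $\theta$-independent and need not lie in any single eigenspace $V_\ell$, so one cannot directly invoke the radial Nehari estimate. Your attempt to control $\Phi_{\alpha,m,\mu}(u)$ via the polar decomposition $u=g(r)e^{i\phi(r,\theta)}$ runs into the term $g^2(r^{-2}-\alpha^2)(\partial_\theta\phi)^2$, which is negative and a priori unbounded below on the hyperbolic annulus; invoking ``the spectral gap'' or ``the arithmetic spacing argument'' does not by itself bound this term, because the spectral gap concerns the quadratic form on eigenspace decompositions, whereas here $\phi$ is an arbitrary phase with no Fourier support restriction. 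The decisive observation you are missing is that the real part of $u^{-1}L_{\alpha,m,\mu}u=|u|^{p-2}$ forces the phase-induced potential $W:=(\partial_r\phi)^2+(r^{-2}-\alpha^2)(\partial_\theta\phi)^2-2\mu\partial_\theta\phi-\mu^2$ to be \emph{radial} (since $-\Delta g/g + m - g^{p-2}$ is radial whenever $g>0$ is), which combined with the imaginary-part (continuity) equation $\operatorname{div}(g^2\nabla\phi)=\alpha^2\partial_\theta(g^2\partial_\theta\phi)$ yields much stronger structural constraints on $\phi$ than a pointwise sign estimate. As written, however, the proposal neither establishes this structure nor otherwise bounds $\|g\|_p^p$ from below, so the comparison $c_{\alpha_n,m,\mu}<\inf\{\Phi(u):|u|\text{ radial, }u\text{ critical}\}$ is not actually proved.
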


%
%
%
%
%

\end{document}